\newtheorem{thm}{Theorem}[section]
\newtheorem{lem}[thm]{Lemma}
\newtheorem{prop}[thm]{Proposition}
\newtheorem{cor}[thm]{Corollary}
\theoremstyle{definition}
\newtheorem{dfn}[thm]{Definition}
\newtheorem{ex}[thm]{Example}
\theoremstyle{remark}
\newtheorem{remark}[thm]{Remark}
\newcommand{\CA}{{\mathcal{A}}}
\newcommand{\CE}{{\mathcal{E}}}
\newcommand{\bE}{{\overline{\mathcal{E}}}}
\newcommand{\bF}{{\overline{\mathcal{F}}}}
\newcommand{\CEa}{\CE^{0,-}}
\newcommand{\bH}{{\overline{H}}}
\newcommand{\CL}{{\mathcal{L}}}
\newcommand{\CB}{{\mathcal{B}}}
\newcommand{\CO}{{\mathcal{O}}}
\newcommand{\QCB}{[\mathcal{B}]}
\newcommand{\af}{\alpha}
\newcommand{\bt}{\beta}
\newcommand{\gm}{\gamma}
\newcommand{\dt}{\delta}
\newcommand{\ld}{\lambda}
\newcommand{\sm}{\sigma}
\newcommand{\Om}{\Omega}
\begin{document}


\title[The gauge-invariant ideals of labelled graph $C^*$-algebras]
{The structure of gauge-invariant ideals of labelled graph $C^*$-algebras}

\author[J. A. Jeong]{Ja A Jeong$^{\dagger}$}
\thanks{Research partially supported by NRF-2009-0068619$^{\dagger}$
and Hanshin University$^{\ddagger}$.}
\address{
Department of Mathematical Sciences and Research Institute of Mathematics\\
Seoul National University\\
Seoul, 151--747\\
Korea} \email{jajeong\-@\-snu.\-ac.\-kr }

\author[S. H. Kim]{Sun Ho Kim$^{\dagger}$}
\address{
Department of Mathematical Sciences\\
Seoul National University\\
Seoul, 151--747\\
Korea} \email{hoya4200\-@\-snu.\-ac.\-kr }

\author[G. H. Park]{Gi Hyun Park$^{\ddagger}$}
\address{
Department of Mathematics\\
Hanshin University\\
Osan, 151--747\\
Korea} \email{ghpark\-@\-hanshin.\-ac.\-kr }

\subjclass[2000]{37B40, 46L05, 46L55}

\keywords{labelled graph $C^*$-algebra,  gauge-invariant ideal}

\subjclass[2000]{46L05, 46L55}

\begin{abstract}
In this paper, we consider the gauge-invariant ideal 
structure of a $C^*$-algebra $C^*(E,\CL,\CB)$ associated to a set-finite, 
receiver set-finite and weakly left-resolving labelled space $(E,\CL,\CB)$, 
where $\CL$ is a labelling map assigning an alphabet 
to each edge of the directed graph $E$ with no sinks. 
Under the assumption that an accommodating set $\CB$ is closed 
under taking relative complement, it is obtained 
that there is a one to one correspondence between the set of 
all hereditary saturated subsets of $\CB$ and the gauge-invariant 
ideals of $C^*(E,\CL,\CB)$. 
For this, we introduce a quotient labelled space $(E,\CL, \QCB_\textsc{R})$ 
arising from an equivalence relation $\sim_\textsc{R}$ on $\CB$ 
and show the existence of the $C^*$-algebra $C^*(E,\CL, \QCB_\textsc{R})$ generated by a universal 
representation of $(E,\CL, \QCB_\textsc{R})$. 
Also the gauge-invariant uniqueness theorem for $C^*(E,\CL, \QCB_\textsc{R})$
is obtained. 

For simple labelled graph $C^*$-algebras $C^*(E,\CL,\bE)$, where  
$\bE$ is the smallest accommodating set containing all the generalized vertices, 
it is observed that if for each vertex $v$ of $E$, 
a generalized vertex $[v]_l$ is finite for some $l$, 
then $C^*(E,\CL,\bE)$ is simple if and only if 
$(E,\CL,\bE)$ is strongly cofinal and disagreeable. 
This is done by examining the merged labelled graph $(F,\CL_F)$ of $(E,\CL)$ and 
the common properties that  $C^*(E,\CL,\bE)$ and $C^*(F,\CL,\bF)$ share.
\end{abstract}

\maketitle

\setcounter{equation}{0}

\section{Introduction}

In \cite{BP1}, Bates and Pask introduced
a class of $C^*$-algebras  associated to labelled graphs.
Their motivation was to simultaneously generalize
ultra graph $C^*$-algebras (\cite{To1, To2}) and
the shift space $C^*$-algebras (\cite{CM,Ma}).
A labelled graph $C^*$-algebra $C^*(E,\CL,\CB)$ is
the universal $C^*$-algebra generated
by a  family of partial isometries $s_a$
indexed by labels $a$
and projections $p_A$ indexed by vertex subsets $A$ in an
accommodating set $\CB$ satisfying certain conditions.
By definition $C^*(E,\CL,\CB)$ depends on the choice of an accommodating set $\CB$
as well as a labelled graph $(E, \CL)$, where
$\CL$ is a labelling map assigning a label to each edge of $E$.
 An accommodating set $\CB$ is a collection of vertex subsets ($\CB\subset 2^{E^0}$)
 containing the ranges of all labelled paths
 which is closed under finite unions, finite intersections,
 and relative ranges.
 Among accommodating sets  of a labelled graph $(E,\CL)$,
 the smallest one $\CEa$ was mainly dealt with in \cite{BP2}
 under the assumptions that $(E,\CL)$ is essential
 ($E$ has no sinks and no sources),
 set-finite and receiver set-finite (every $A\in \CEa$
 emits and receives only finitely many labelled edges).
 Some conditions on $(E,\CL,\CEa)$
 were investigated to explore the simplicity of $C^*(E,\CL,\CEa)$ in \cite{BP2}.
 Since the accommodating set $\CEa$ is not closed under taking relative complement
 in general,  it may not contain generalized vertices $[v]_l$
 despite these generalized vertices were used effectively in \cite{BP2} as
 the canonical spanning set of labelled graph $C^*$-algebras $C^*(E,\CL,\CEa)$.
  This led us to consider an alternative of $\CEa$ in \cite{JK},
  that is,  the smallest accommodating set  $\bE$ which is
  closed under taking relative complement
  (or equivalently, the smallest accommodating set containing
  all generalized vertices).
  It was then proven  that
  if $C^*(E,\CL,\bE)$ is simple, $(E,\CL,\bE)$ is strongly cofinal
  (\cite[Theorem 3.8]{JK}) and
  if in addition $\{v\}\in \bE$ for
  every vertex $v\in E^0$, the labelled space $(E,\CL,\bE)$ is disagreeable
  (\cite[Theorem 3.14]{JK}).
  Furthermore,  a slight modification of the proof of Theorem 6.4 in \cite{BP2},
  shows that if $(E,\CL,\bE)$ is strongly cofinal and disagreeable,
  the $C^*$-algebra $C^*(E,\CL,\bE)$ is simple (\cite[Theorem 3.16]{JK}).
  Even when $\CEa\neq \bE$, if $(E,\CL,\CEa)$ and $(E,\CL,\bE)$ are
  weakly left-resolving, $C^*(E,\CL,\CEa)\cong C^*(E,\CL,\bE)$
  (Corollary~\ref{cor-isomorphism}).

  By universal property, $C^*(E,\CL,\CB)$ admits
  the gauge action of the unit circle.
As for the gauge-invariant ideal structure of graph $C^*$-algebras,
it is known (\cite{BPRS}) that the set of gauge-invariant ideals $I$ of
  a row-finite graph $C^*$-algebra $C^*(E)=C^*(s_e, p_v)$ is in
  bijective correspondence with the set of
   hereditary saturated vertex subsets  $H$ in
   such a way that $I$ is the ideal generated by the projections
   $p_v$, $v\in H$. 
   By an ideal we always mean a closed two-sided one, and 
   more general description on the gauge-invariant ideal structure of an arbitrary 
   graph $C^*$-algebra is obtained in \cite{BHRS}.
   Also, for the class of ultragraph $C^*$-algebras (\cite{To1}) which contains all
  graph $C^*$-algebras (see \cite{KPR, KPRR, BHRS, BPRS} among others)
  and Exel-Laca algebras (\cite{EL}),
  the structure of gauge-invariant ideals of a ultragraph
  $C^*$-algebra was described via one to one correspondence with the set of
  admissible pairs of the ultra graph (\cite{KMST})
  using the results known for the $C^*$-algebras of
  topological graphs and topological quivers (\cite{Ka,MT}).

   The main purpose of this paper is to analyze the structure of
   gauge-invariant ideals of a labelled graph $C^*$-algebra $C^*(E,\CL,\CB)$
   when $E$ has no sinks and $(E,\CL,\CB)$ is a set-finite, receiver set-finite and
   weakly left-resolving labelled space
   such that $\CB$ is closed under taking relative complement.
  One might expect that a one to one correspondence like
  the correspondence mentioned above for graph $C^*$-algebras
  could be easily established
  by similar arguments used in the proofs for graph $C^*$-algebras
  as done in \cite{BPRS}.
  But an essential difficulty lies in the situation  that the quotient algebra
  $C^*(E,\CL,\CB)/I$ by a gauge-invariant ideal $I$ is not known
  to be realized as a labelled graph $C^*$-algebra again.
  So we introduce a notion of quotient labelled space
  $(E,\CL,\QCB_\textsc{R})$ which is similar to a labelled space
  but with the equivalence classes $\QCB_\textsc{R}$ of an
  equivalence relation $\sim_\textsc{R}$ on $\CB$
  in place of  $\CB$ of a labelled space $(E,\CL,\CB)$.
  Then in Theorem~\ref{theorem-quotientspace} 
  we associate a universal $C^*$-algebra $C^*(E,\CL,\QCB_\textsc{R})$
  to a quotient labelled space 
  and   prove that 
  every $C^*$-algebra $C^*(E,\CL,\QCB_\textsc{R})$ of a quotient
  labelled space is isomorphic to a quotient algebra
  $C^*(E,\CL,\CB)/I$ by a gauge-invariant ideal $I$ of $C^*(E,\CL,\CB)$
  in Corollary~\ref{cor-isomorphism} which follows from
  the gauge-invariant uniqueness theorem (Theorem~\ref{thm-uniqueness})
  for the $C^*$-algebras of quotient labelled spaces.   
  Moreover, it is obtained
  that if $I$ is a gauge-invariant ideal of $C^*(E,\CL,\CB)$,
  the quotient algebra $C^*(E,\CL,\CB)/I$ is isomorphic to
  a $C^*$-algebra $C^*(E,\CL,\QCB_\textsc{R})$ associated to 
  certain quotient labelled space.   
  We then apply these  isomorphism  relations  to obtain
  the main result (Theorem~\ref{theorem-main}) that
  there exists a one to one correspondence
  between the set of hereditary saturated subsets $H$
  (which we shall define, in Section 3) of $\CB$
  and the set of gauge-invariant ideals $I_H$ of $C^*(E,\CL,\CB)$.

  Returning to the labelled spaces $(E,\CL,\bE)$ and
  the simplicity of $C^*(E,\CL,\bE)$ in Section 6, we
  consider a labelled graph $(E,\CL)$ such that
  for each $v\in E^0$, a generalized vertex
  $[v]_l$ is a finite set for some $l$.
  For the merged labelled space $(F,\CL_F,\bF)$ (Definition~\ref{def-mergedgraph})
  of $(E,\CL,\bE)$, we show that $\bF$ has the property that
  every set of single vertex belongs to $\bF$ and
  $C^*(E,\CL,\bE)\cong C^*(F,\CL_F,\bF)$ (Theorem~\ref{theorem-merged-iso}).
  Moreover it is shown that
  $(F,\CL_F,\bF)$ is strongly cofinal (respectively, disagreeable)
  if and only if $(E,\CL,\bE)$ is
  strongly cofinal (respectively, disagreeable) (Theorem~\ref{theorem-cofinal}).
  This then proves that if  $(E,\CL,\bE)$ is a labelled space such that
  for each $v\in E^0$, a generalized vertex
  $[v]_l$ is finite for some $l$,  then
  $C^*(E,\CL,\bE)$ is simple if and only if
  $(E,\CL,\bE)$ is
  strongly cofinal and disagreeable (Corollary~\ref{corollary}).

\vskip 1pc

\section{Labelled  spaces and their $C^*$-algebras}

\vskip 1pc

We use notational conventions of \cite{KPR} for graphs and
graph $C^*$-algebras and of \cite{BP2} for
labelled spaces and their $C^*$-algebras.
A  {\it directed graph} is a quadruple $E=(E^0,E^1,r,s)$
consisting of a countable set of
vertices  $E^0$, a countable set of edges $E^1$,
and the range, source maps $r_E$, $s_E: E^1\to E^0$
 (we often write $r$ and $s$ for $r_E$ and $s_E$, respectively).
 By $E^n$ we denote the set of all finite paths $\ld=\ld_1\cdots \ld_n$
 of {\it length} $n$ ($|\ld|=n$),
($\ld_{i}\in E^1,\ r(\ld_{i})=s(\ld_{i+1}), 1\leq i\leq n-1$)
and use notations $E^{\leq n}:=\cup_{i=1}^n E^i$ and
$E^{\geq n}:=\cup_{i=n}^\infty E^i$.
The maps $r$ and $s$ naturally extend to $E^{\geq 1}$.
If a sequence of edges $\ld_i\in E^1(i\geq 1)$  satisfies  $r(\ld_{i})=s(\ld_{i+1})$,
one obtains an infinite path $\ld_1\ld_2\ld_3\cdots $
with the source $s(\ld_1\ld_2\ld_3\cdots):=s(\ld_1)$.
$E^\infty$ denotes the set of all infinite paths.

 A {\it labelled graph} $(E,\CL)$ over a countable alphabet  $\CA$
 consists of a directed graph $E$ and
 a {\it labelling map} $\CL:E^1\to \CA$.
 We assume that $\CL$ is onto.
 Let $\CA^*$ and $\CA^\infty$ be the sets of all finite sequences
 (of length greater than or equal to $1$)
 and infinite sequences, respectively.
 Then $\CL(\ld):=\CL(\ld_1)\cdots \CL(\ld_n)\in \CA^*$ if
  $\ld=\ld_1\cdots \ld_n\in E^n$, and
  $\CL(\dt):=\CL(\dt_1)\CL(\dt_2)\cdots\in \CL(E^\infty)\subset\CA^\infty$
  if $\dt=\dt_1\dt_2\cdots \in E^\infty$.
We use notation $\CL^*(E):=\CL(E^{\geq 1})$.
 The {\it range} $r(\af)$ and {\it source} $s(\af)$
 of a labelled path $\af\in \CL^*(E)$ are
 subsets of $E^0$ defined by
\begin{align*}
r(\af) &=\{r(\ld) \,:\, \ld\in E^{\geq 1},\,\CL(\ld)=\af\},\\
 s(\af) &=\{s(\ld) \,:\, \ld\in E^{\geq 1},\, \CL(\ld)=\af\}.
\end{align*}
 The {\it relative range of $\af\in \CL^*(E)$
 with respect to $A\subset 2^{E^0}$} is defined to be
$$
 r(A,\af)=\{r(\ld)\,:\, \ld\in E^{\geq 1},\ \CL(\ld)=\af,\ s(\ld)\in A\}.
$$
 If $\CB\subset 2^{E^0}$ is a collection of subsets of $E^0$ such that
 $r(A,\af)\in \CB$ whenever $A\in \CB$ and $\af\in \CL^*(E)$,
 $\CB$ is said to be
 {\it closed under relative ranges} for $(E,\CL)$.
 We call $\CB$ an {\it accommodating set} for $(E,\CL)$
 if it is closed under relative ranges,
 finite intersections and unions and
 contains $r(\af)$ for all $\af\in \CL^*(E)$.
  If $\CB$ is accommodating for $(E,\CL)$, the triple $(E,\CL,\CB)$ is called
 a {\it labelled space}.
 A labelled space $(E,\CL,\CB)$ is {\it weakly left-resolving} if
 $$r(A,\af)\cap r(B,\af)=r(A\cap B,\af)$$
 for all $A,B\in \CB$ and  $\af\in \CL^*(E)$.

 For $A,B\in 2^{E^0}$ and $n\geq 1$, let
 $$ AE^n =\{\ld\in E^n\,:\, s(\ld)\in A\},\ \
  E^nB=\{\ld\in E^n\,:\, r(\ld)\in B\},$$
 and
 $AE^nB=AE^n\cap E^nB$.
 We write $E^n v$ for $E^n\{v\}$ and $vE^n$ for $\{v\}E^n$,
 and will use notations $AE^{\geq k}$ and $vE^\infty$
 which should have obvious meaning.
 A labelled space $(E,\CL,\CB)$ is said to be {\it set-finite}
 ({\it receiver set-finite}, respectively) if for every $A\in \CB$
 the set  $\CL(AE^1)$ ($\CL(E^1A)$, respectively) finite.

\vskip 1pc
\noindent
{\bf Assumption 1.}\label{assumptions-graph}
 Throughout this paper, we assume that $E$ has no sinks, that is
$|s^{-1}(v)|>0$ for all $v\in E^0$.

\vskip 1pc

\begin{dfn} (\cite[Definition 4.1]{BP1})
\label{definition-representation}
Let $(E,\CL,\CB)$ be a weakly left-resolving labelled space.
A {\it representation} of $(E,\CL,\CB)$
consists of projections $\{p_A\,:\, A\in \CB\}$ and
partial isometries
$\{s_a\,:\, a\in \CA\}$ such that for $A, B\in \CB$ and $a, b\in \CA$,
\begin{enumerate}
\item[(i)]  $p_{\emptyset}=0$, $p_Ap_B=p_{A\cap B}$, and
$p_{A\cup B}=p_A+p_B-p_{A\cap B}$,
\item[(ii)] $p_A s_a=s_a p_{r(A,a)}$,
\item[(iii)] $s_a^*s_a=p_{r(a)}$ and $s_a^* s_b=0$ unless $a=b$,
\item[(iv)] for $A\in \CB$, if  $\CL(AE^1)$ is finite and non-empty, then
$$p_A=\sum_{a\in \CL(AE^1)} s_a p_{r(A,a)}s_a^*.$$
\end{enumerate}

\vskip 1pc

\begin{remark}\label{elements}
It is known \cite[Theorem 4.5]{BP1} that
if $(E,\CL,\CB)$ is a weakly left-resolving labelled space,
there exists a $C^*$-algebra $C^*(E,\CL,\CB)$
generated by a universal representation
$\{s_a,p_A\}$ of $(E,\CL,\CB)$.
In this case, we simply write $C^*(E,\CL,\CB)=C^*(s_a,p_A)$
and call $C^*(E,\CL,\CB)$ the {\it labelled graph $C^*$-algebra} of
a labelled space $(E,\CL,\CB)$.
 Furthermore,  $s_a\neq 0$ and $p_A\neq 0$ for $a\in \CA$
and  $A\in \CB$, $A\neq \emptyset$.
Note also that
$s_\af p_A s_\bt^*\neq 0$ if and only if $A\subset r(\af)\cap r(\bt)$.
If we assume that $(E,\CL,\CB)$ is set-finite,
by \cite[Lemma 4.4]{BP1} and
Definition~\ref{definition-representation}(iv)
it follows that
\begin{eqnarray}\label{eqn-projectionsum}
p_A=\sum_{\sm\in \CL(AE^n)} s_\sm p_{r(A,\sm)}s_\sm^*
\ \text{ for }   A\in \CB,\  n\geq 1
\end{eqnarray}
and
\begin{eqnarray}\label{eqn-elements}
C^*(E,\CL,\CB)=\overline{span}\{s_\af p_A s_\bt^*\,:\,
\af,\,\bt\in \CL^*(E),\ A\in \CB\}.
\end{eqnarray}
\end{remark}

\vskip 1pc
By universal property of  $C^*(E,\CL,\CB)=C^*(s_a, p_A)$,
there exists a strongly continuous action
$\gm:\mathbb T\to Aut(C^*(E,\CL,\CB))$,  called
the {\it gauge action}, such that
$\gm_z(s_a)=zs_a$ and  $\gm_z(p_A)=p_A$.
We have the following gauge-invariant uniqueness theorem for
labelled graph $C^*$-algebras $C^*(E,\CL,\CB)$.

\vskip 1pc

\begin{thm}\label{theorem-gaugeinvariant}{\rm (\cite[Theorem 5.3]{BP1})}
Let $(E,\CL,\CB)$ be a weakly left-resolving labelled space and let
$\{S_a,P_A\}$ be a representation of $(E,\CL,\CB)$ on Hilbert space.
Take $\pi_{S,P}$ to be the representation of $C^*(E,\CL,\CB)$ satisfying
$\pi_{S,P}(s_a)=S_a$ and $\pi_{S,P}(p_A)=P_A$.
Suppose that each $P_A$ is non-zero whenever $A\neq \emptyset$, and that
there is a strongly continuous action $\bt$ of $\mathbb T$ on $C^*(S_a,P_A)$
such that for all $z\in\mathbb T$,
$\bt_z\circ \pi_{S,P}=\pi_{S,P}\circ \gm_z$. Then $\pi_{S,P}$ is faithful.
\end{thm}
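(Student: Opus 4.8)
The plan is the standard one for a gauge-invariant uniqueness theorem: build a faithful conditional expectation onto the fixed-point algebra of the gauge action, show that $\pi_{S,P}$ is injective on that fixed-point algebra, and then bootstrap to all of $C^*(E,\CL,\CB)$ using the assumed intertwining of the two circle actions.

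First I would record the conditional expectations. Averaging over the circle, $\Ph(x)=\int_{\T}\gm_z(x)\,dz$ defines a conditional expectation from $C^*(E,\CL,\CB)$ onto its fixed-point algebra $C^*(E,\CL,\CB)^{\gm}$, and $\Ph$ is faithful on positive elements, since strong continuity of $\gm$ forces $\Ph(x^*x)=0$ only when $x^*x=0$. Averaging $\bt$ in the same way gives a conditional expectation $\Ps$ from $C^*(S_a,P_A)$ onto $C^*(S_a,P_A)^{\bt}$, and the hypothesis $\bt_z\circ\pi_{S,P}=\pi_{S,P}\circ\gm_z$ for all $z$ yields at once $\Ps\circ\pi_{S,P}=\pi_{S,P}\circ\Ph$. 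Granting injectivity of $\pi_{S,P}$ on the fixed-point algebra, the theorem follows: if $\pi_{S,P}(x)=0$, then $\pi_{S,P}(\Ph(x^*x))=\Ps(\pi_{S,P}(x^*x))=0$, hence $\Ph(x^*x)=0$, hence $x^*x=0$, hence $x=0$.

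The heart of the matter is therefore to show $\pi_{S,P}$ is injective on $C^*(E,\CL,\CB)^{\gm}$. By the spanning description in Remark~\ref{elements}, this fixed-point algebra is the closed span of the elements $s_{\af}p_{A}s_{\bt}^{*}$ with $|\af|=|\bt|$ (the projections $p_A$ being the case of empty $\af,\bt$), and the refinement identity $p_{A}=\sum_{\sm\in\CL(AE^{1})}s_{\sm}p_{r(A,\sm)}s_{\sm}^{*}$ lets one rewrite each such element one level higher as $s_{\af}p_{A}s_{\bt}^{*}=\sum_{\sm}s_{\af\sm}p_{r(A,\sm)}s_{\bt\sm}^{*}$. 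Since $E$ has no sinks, this exhibits $C^*(E,\CL,\CB)^{\gm}$ as the closure of a union (increasing as soon as each $\CL(AE^1)$ is finite) of subalgebras $\mathcal{M}_{n}$ generated by the $s_{\af}p_{A}s_{\bt}^{*}$ with $|\af|=|\bt|\le n$. Using that the $s_{\af}$ of a fixed length have orthogonal ranges inside $\mathcal{M}_n$ and that $s_{\af}^{*}s_{\af}=p_{r(\af)}$ lies in the commutative algebra $\mathcal{D}$ generated by $\{p_{A}:A\in\CB\}$ — relations (ii)--(iii) of Definition~\ref{definition-representation} — one decomposes $\mathcal{M}_{n}$ into a direct sum of corners of matrix algebras over commutative quotients of $\mathcal{D}$, the relations among the constituent projections being exactly the Boolean relations (i) restricted to $\CB$; the weakly left-resolving hypothesis is what keeps those relations compatible with the off-diagonal generators. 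It then suffices to show $\pi_{S,P}$ is isometric on $\mathcal{D}$: using finite unions, intersections, and relative complements within $\CB$, any element of $\mathcal{D}$ can be written $\sum_{i}\ld_{i}p_{B_{i}}$ over pairwise disjoint non-empty $B_i\in\CB$, with norm $\max_{i}|\ld_{i}|$; since the $P_{B_i}=\pi_{S,P}(p_{B_i})$ are pairwise orthogonal and non-zero by hypothesis, the same norm formula holds for the image, so $\pi_{S,P}$ is isometric on $\mathcal{D}$, hence injective on each $\mathcal{M}_n$ and thus on the limit.

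The step I expect to be the real obstacle is this middle one: setting up $C^*(E,\CL,\CB)^{\gm}$ as an inductive limit whose levels $\mathcal{M}_n$ genuinely split into matricial corners over $\mathcal{D}$. For ordinary graph $C^*$-algebras the analogous core is visibly AF with explicit matrix units, but in the labelled setting one must track which $A\in\CB$ contribute projections at each level, check that relation (iv) together with weak left-resolving yields an honest corner decomposition with no hidden cancellation in the diagonal, and only then read off that the hypothesis $P_A\ne0$ for $A\ne\emptyset$ upgrades faithfulness on $\mathcal{D}$ to faithfulness on the whole core — after which the conditional-expectation argument of the second paragraph closes the proof.
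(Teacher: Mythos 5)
Your outer frame is the same as the paper's: average over the circle to get faithful conditional expectations intertwined by $\pi_{S,P}$, reduce faithfulness to faithfulness on the fixed-point algebra, and close with the argument the paper delegates to \cite[Lemma 2.2]{BKR}. That part of your proposal is fine. The problem is the middle step, which you yourself identify as ``the real obstacle'' and then leave unproved: the claimed decomposition of the core into corners of matrix algebras over $\CD$, and the consequent reduction of injectivity on the core to isometry on $\CD$. That is not a detail to be checked later; it is where the entire content of the theorem sits, and as sketched it does not go through under the stated hypotheses.

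Concretely, two things fail. First, your disjointification of elements of $\CD$ ``using finite unions, intersections, and relative complements within $\CB$'' requires $\CB$ to be closed under relative complements. That is the paper's Assumption~3, imposed only from Section~3 onward; in this theorem (quoted verbatim from \cite[Theorem 5.3]{BP1}) $\CB$ is merely an accommodating set, so $A\setminus B$ need not lie in $\CB$, the minimal projections of a finite-dimensional piece of $\CD$ are differences such as $p_A-p_{A\cap B}$ to which the hypothesis $P_A\neq 0$ ($A\neq\emptyset$, $A\in\CB$) does not directly apply, and your norm formula $\|\sum_i\ld_i p_{B_i}\|=\max_i|\ld_i|$ is not available. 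Likewise, rewriting $s_\af p_A s_\bt^*$ one level deeper via relation (iv) needs $\CL(AE^1)$ finite, i.e.\ set-finiteness (Assumption~2), which is also not a hypothesis here. Second, even granting both assumptions, a finite subset $Y$ of the core closed under multiplication and adjoints produces minimal projections of $C^*(Y)$ of the form $s_{\af}p_{A}s_{\af}^*-\sum_l s_{\af\bt_l}p_{A_l}s_{\af\bt_l}^*$ with corrections at strictly deeper levels; these do not lie in (a conjugate of) $\CD$, so isometry on $\CD$ does not by itself show their images are nonzero. The paper's substitute for your asserted corner decomposition is Lemma~\ref{lemma-minimalprojection} (used in the proof of Theorem~\ref{thm-uniqueness}): each such minimal projection either is a pure generator $s_{\af}p_{A}s_{\af}^*$ or dominates a nonzero pure generator $r=s_{\af\bt}p_{r(A,\bt)}s_{\af\bt}^*$ orthogonal to the subtracted part, and it is this $r$ --- whose image is nonzero precisely because $P_{r(A,\bt)}\neq 0$ --- that lets the nondegeneracy hypothesis bite. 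Producing that $r$ (or, equivalently, justifying your corner decomposition with no hidden cancellation) is the argument your proposal is missing.
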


\vskip 1pc

For $v,w\in E^0$, we write $v\sim_l w$ if $\CL(E^{\leq l} v)=\CL(E^{\leq l} w)$
as in \cite{BP2}.
Then  $ \sim_l $ is an equivalence relation on $E^0$.
The equivalence class $[v]_l$ of $v$  is called a {\it generalized vertex}.
Let $\Om_l(E):=E^0/\thicksim_l$.
   For $k>l$ and $v\in E^0$,  $[v]_k\subset [v]_l$ is obvious and
   $[v]_l=\cup_{i=1}^m [v_i]_{l+1}$
   for some vertices  $v_1, \dots, v_m\in [v]_l$ (\cite[Proposition 2.4]{BP2}).

\vskip 1pc
\noindent
{\bf Assumption 2.}\label{assumptions-graph}
 From now on we assume that our labelled space $(E,\CL,\CB)$
 is set-finite and receiver set-finite for any accommodating set $\CB$.

\vskip 1pc

Let $\CEa$ be the smallest accommodating set for $(E,\CL)$.
Then  $\CEa$ consists of the sets of the form
$\cup_{k=1}^m\cap_{i=1}^n  r(\bt_{i,k})$, $ \bt_{i,k}\in \CL^*(E)$,
as mentioned in \cite[Remark 2.1]{BP2}, and
is contained in every accommodating set $\CB$ for $(E,\CL)$.
 Let $\bE$ be the smallest one among the accommodating sets $\CB$ for $(E,\CL)$
such that  $A\setminus B\in \CB$ whenever $A,\,B\in \CB$.
Then $\bE$ contains all generalized vertices $[v]_l$ since
every $[v]_l$ is the relative complement of sets in $\CEa$,
more precisely,
$[v]_l=X_l(v)\setminus r(Y_l(v))$,  where
$X_l(v):=\cap_{\af\in \CL(E^{\leq l} v)} r(\af)\ \text{ and }\
 Y_l(v):=\cup_{w\in X_l(v)} \CL(E^{\leq l} w)\setminus \CL(E^{\leq l} v)$
 (\cite[Proposition 2.4]{BP2}).
Moreover if $\bE$ is weakly left-resolving then
\begin{eqnarray}\label{eqn-bE}
\bE=\{ \cup_{i=1}^n[v_i]_{l_i}: v_i\in E^0, \ l_i\geq 1,\ n\geq 1\}
\end{eqnarray}
(\cite[Proposition  3.4]{JK}).

\vskip 1pc

Let $\CB_1$ and $\CB_2$ be two accommodating sets
for  $(E,\CL)$ such that $\CB_1\subset \CB_2$.
If $C^*(E,\CL,\CB_1)=C^*(t_a, q_A)$ and  $C^*(E,\CL,\CB_2)=C^*(s_a, p_B)$,
since $\{s_a,\, p_A: a\in \CA,\ A\in \CB_1\}$ is a representation of
$(E,\CL,\CB_1)$,  by the universal property of  $C^*(E,\CL,\CB_1)$
there exists a $*$-homomorphism
$\iota: C^*(E,\CL,\CB_1)\to C^*(E,\CL,\CB_2)$ such that
$\iota(t_a)=s_a$ and $q_A=p_A$ for $a\in \CA$, $A\in \CB_1$.
Let $\af$ and $\bt $ be the gauge actions of $\mathbb T$
 on $C^*(E,\CL,\CB_1)$ and $C^*(E,\CL,\CB_2)$, respectively.
 Then $\iota\circ\af_z =\bt_z\circ \iota$ for $z\in \mathbb T$ and
 $\iota(q_A)=p_A\neq 0$ for $A\in \CB_1$, hence
 by Theorem~\ref{theorem-gaugeinvariant}
 we have the following proposition.

\vskip 1pc

\begin{prop}\label{proposition-B1andB2}
Let $\CB_1\subset \CB_2$ be two accommodating sets for a labelled graph
$(E,\CL)$ such that
$(E,\CL,\CB_i)$ is weakly left-resolving for $i=1,2$.
If $C^*(E,\CL,\CB_1)=C^*(t_a, q_A)$ and  $C^*(E,\CL,\CB_2)=C^*(s_a, p_B)$,
the homomorphism $\iota:C^*(E,\CL,\CB_1)\to C^*(E,\CL,\CB_2)$ such that
 $\iota(t_a)= s_a$ and  $\iota(q_A)= p_A$  is injective.
\end{prop}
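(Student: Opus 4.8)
The plan is to deduce this directly from the gauge-invariant uniqueness theorem (Theorem~\ref{theorem-gaugeinvariant}), applied to the labelled space $(E,\CL,\CB_1)$ and the representation obtained by viewing the canonical generators of $C^*(E,\CL,\CB_2)$ as a representation of $(E,\CL,\CB_1)$. The homomorphism $\iota$ has already been produced in the paragraph preceding the statement via the universal property of $C^*(E,\CL,\CB_1)$ (which exists by Remark~\ref{elements} since $(E,\CL,\CB_1)$ is weakly left-resolving), so the task is purely to check that $\iota=\pi_{S,P}$ meets the hypotheses of Theorem~\ref{theorem-gaugeinvariant}.

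First I would verify that $\{s_a,\,p_A : a\in\CA,\ A\in\CB_1\}$ is a representation of $(E,\CL,\CB_1)$. Relations (i)--(iii) of Definition~\ref{definition-representation} are inherited verbatim from the fact that $\{s_a,p_B\}$ is a representation of $(E,\CL,\CB_2)$, because every vertex subset occurring in those relations ($A\cap B$, $A\cup B$, $r(A,a)$, $r(a)$) already lies in $\CB_1\subset\CB_2$. For relation (iv), given $A\in\CB_1$ with $\CL(AE^1)$ finite and non-empty, the relative ranges $r(A,a)$ belong to $\CB_1$ since an accommodating set is closed under relative ranges; hence the identity $p_A=\sum_{a\in\CL(AE^1)} s_a p_{r(A,a)} s_a^*$ is just an instance of (iv) for $(E,\CL,\CB_2)$. (Set-finiteness from Assumption 2 ensures the sum is finite whenever non-empty.)

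Next, the non-degeneracy hypothesis of Theorem~\ref{theorem-gaugeinvariant}: each $p_A$ with $A\in\CB_1$, $A\neq\emptyset$, is non-zero, which is immediate from Remark~\ref{elements} applied to $C^*(E,\CL,\CB_2)$ together with $\CB_1\subset\CB_2$. For the gauge-equivariance, observe that the $C^*$-subalgebra $\iota\bigl(C^*(E,\CL,\CB_1)\bigr)=C^*(s_a,\,p_A:A\in\CB_1)$ of $C^*(E,\CL,\CB_2)$ is invariant under the gauge action $\bt$ of $C^*(E,\CL,\CB_2)$, since $\bt_z$ fixes every $p_A$ and scales every $s_a$; thus $\bt$ restricts to a strongly continuous circle action on this subalgebra. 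On generators, $\iota(\af_z(t_a))=z s_a=\bt_z(\iota(t_a))$ and $\iota(\af_z(q_A))=p_A=\bt_z(\iota(q_A))$, so $\bt_z\circ\iota=\iota\circ\af_z$ for all $z\in\T$. Theorem~\ref{theorem-gaugeinvariant} then yields that $\iota$ is faithful, i.e.\ injective.

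I expect no serious obstacle: the argument is essentially a bookkeeping check of the hypotheses of Theorem~\ref{theorem-gaugeinvariant}. The only point that deserves a moment's care is relation (iv) for $(E,\CL,\CB_1)$, where one must invoke that an accommodating set is closed under relative ranges so that $r(A,a)\in\CB_1$; everything else follows mechanically from $\CB_1\subset\CB_2$ and the definitions already in place.
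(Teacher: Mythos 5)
Your proof is correct and follows exactly the route the paper takes: the preceding paragraph of the paper constructs $\iota$ from the universal property by noting that $\{s_a,p_A:A\in\CB_1\}$ is a representation of $(E,\CL,\CB_1)$, and then injectivity is deduced from Theorem~\ref{theorem-gaugeinvariant} using $\iota\circ\af_z=\bt_z\circ\iota$ and $p_A\neq 0$. Your write-up merely makes the bookkeeping (relations (i)--(iv) and the restriction of the gauge action) more explicit.
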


\vskip 1pc

\begin{cor}\label{cor-isomorphism}
Let $(E,\CL,\CEa)$ and $(E,\CL,\bE)$ be weakly
left-resolving labelled spaces. Then
$C^*(E,\CL,\CEa)\cong C^*(E,\CL,\bE)$.
\end{cor}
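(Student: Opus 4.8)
The plan is to deduce the corollary directly from Proposition~\ref{proposition-B1andB2} together with the intrinsic description \eqref{eqn-bE} of $\bE$. Recall that $\CEa$ is the smallest accommodating set for $(E,\CL)$, so $\CEa\subset\bE$; since both $(E,\CL,\CEa)$ and $(E,\CL,\bE)$ are assumed weakly left-resolving, Proposition~\ref{proposition-B1andB2} immediately gives an injective $*$-homomorphism $\io\colon C^*(E,\CL,\CEa)\hookrightarrow C^*(E,\CL,\bE)$ with $\io(t_a)=s_a$ and $\io(q_A)=p_A$ for $a\in\CA$ and $A\in\CEa$. So the only thing left to prove is that this $\io$ is \emph{surjective}, i.e.\ that the generators $s_a$ and $p_B$ ($B\in\bE$) of $C^*(E,\CL,\bE)$ all lie in the image of $\io$.

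Each $s_a$ is of course $\io(t_a)$, so the real task is to express every projection $p_B$, $B\in\bE$, in terms of the projections $p_A$ with $A\in\CEa$ and the partial isometries $s_a$. First I would handle a single generalized vertex: by the formula $[v]_l=X_l(v)\setminus r(Y_l(v))$ quoted from \cite[Proposition 2.4]{BP2}, where $X_l(v)=\cap_{\af\in\CL(E^{\le l}v)}r(\af)$ and $r(Y_l(v))=\cup_{w\in X_l(v)}\cup_{\bt\in\CL(E^{\le l}w)\setminus\CL(E^{\le l}v)}r(\bt)$, both $X_l(v)$ and $r(Y_l(v))$ are finite unions of finite intersections of ranges $r(\af)$, hence lie in $\CEa$; moreover $r(Y_l(v))\subset X_l(v)$ (every element of $Y_l(v)$ is emitted from some $w\in X_l(v)$). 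The difficulty is that the relative complement $X_l(v)\setminus r(Y_l(v))$ need not be in $\CEa$, so one cannot write its projection as a single $p_A$. The key idea is to use relation~(iv) of Definition~\ref{definition-representation} in $C^*(E,\CL,\bE)$: since the labelled space is set-finite, for $A=X_l(v)\in\CEa$ we have $p_{X_l(v)}=\sum_{a\in\CL(X_l(v)E^1)}s_ap_{r(X_l(v),a)}s_a^*$, and likewise for $r(Y_l(v))$. By the weak left-resolving property and closure of $\CEa$ under relative ranges, each $r(X_l(v),a)$ and $r(r(Y_l(v)),a)$ is in $\CEa$, and one checks set-theoretically that $r([v]_l,a)=r(X_l(v),a)\setminus r(r(Y_l(v)),a)$. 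Iterating, the point is that $p_{[v]_l}=p_{X_l(v)}-p_{r(Y_l(v))}$ holds \emph{as operators}: indeed $r(Y_l(v))\subset X_l(v)$ gives $p_{X_l(v)}p_{r(Y_l(v))}=p_{r(Y_l(v))}$ by relation~(i), and since $[v]_l\sqcup r(Y_l(v))=X_l(v)$ is a disjoint union with both pieces together with the union in $\bE$, relation~(i) yields $p_{X_l(v)}=p_{[v]_l}+p_{r(Y_l(v))}$, whence $p_{[v]_l}=p_{X_l(v)}-p_{r(Y_l(v))}\in\io(C^*(E,\CL,\CEa))$ because $p_{X_l(v)}$ and $p_{r(Y_l(v))}$ are images under $\io$ of projections of sets in $\CEa$.

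Having shown $p_{[v]_l}\in\io(C^*(E,\CL,\CEa))$ for every generalized vertex, the general case follows from \eqref{eqn-bE}: an arbitrary $B\in\bE$ has the form $B=\cup_{i=1}^n[v_i]_{l_i}$, and by inclusion-exclusion (relation~(i) applied repeatedly, noting that all the pairwise intersections $[v_i]_{l_i}\cap[v_j]_{l_j}$ and their further intersections also lie in $\bE$), $p_B$ is a polynomial in the projections $p_{[v_i]_{l_i}}$ and $p_{[v_i]_{l_i}\cap[v_j]_{l_j}}\cdots$; each such intersection is again a relative-complement combination of $\CEa$-sets handled exactly as above, so $p_B\in\io(C^*(E,\CL,\CEa))$. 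Therefore $\io$ is onto as well as injective, giving $C^*(E,\CL,\CEa)\cong C^*(E,\CL,\bE)$.

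The step I expect to be the main obstacle is justifying rigorously that the intersections appearing in the inclusion-exclusion expansion of $p_B$, and the relative ranges $r([v]_l,a)$ needed to iterate relation~(iv), all stay inside $\CEa$ (for the intersections of complements, one must expand $X\setminus Y$ style expressions and use weak left-resolvingness to commute intersections past relative ranges); once that bookkeeping is in place, the operator identity $p_{[v]_l}=p_{X_l(v)}-p_{r(Y_l(v))}$ is immediate from the partition relation~(i), and surjectivity of $\io$ drops out.
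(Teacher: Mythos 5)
Your overall route is the paper's route: injectivity of $\io$ from Proposition~\ref{proposition-B1andB2}, surjectivity by writing each $q_{[v]_l}$ as a difference of projections indexed by sets in $\CEa$, and then inclusion--exclusion over the decomposition (\ref{eqn-bE}). (The excursion through relation~(iv) is unnecessary; as you yourself conclude, only relation~(i) is needed.) However, there is one genuine error in your argument: the claimed inclusion $r(Y_l(v))\subset X_l(v)$ is false in general, and with it the operator identity $p_{[v]_l}=p_{X_l(v)}-p_{r(Y_l(v))}$. A label $\bt\in Y_l(v)$ is by definition the label of some path of length $\le l$ \emph{ending} at a vertex $w\in X_l(v)$, but $r(\bt)$ is the set of \emph{all} vertices receiving a path labelled $\bt$, and such a vertex $u$ need not satisfy $\CL(E^{\le l}v)\subset\CL(E^{\le l}u)$, i.e.\ need not lie in $X_l(v)$. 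When $r(Y_l(v))\not\subset X_l(v)$ the partition $X_l(v)=[v]_l\sqcup r(Y_l(v))$ fails, and $p_{X_l(v)}-p_{r(Y_l(v))}$ need not even be a projection.

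The repair is one line and is exactly what the paper does: for $[v]_l=A\setminus B$ with $A,B\in\CEa$ one always has the disjoint union $A=(A\setminus B)\sqcup(A\cap B)$ with $A\cap B\in\CEa$, so relation~(i) gives $q_{[v]_l}=q_A-q_{A\cap B}=\io(p_A-p_{A\cap B})$; in your notation, replace $p_{r(Y_l(v))}$ by $p_{X_l(v)\cap r(Y_l(v))}$ throughout. With that correction the rest of your argument (inclusion--exclusion for $B=\cup_{i=1}^n[v_i]_{l_i}$, noting that the pairwise intersections of generalized vertices are again of the form handled above) goes through and coincides with the paper's proof.
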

\begin{proof}
Let $C^*(E,\CL,\CEa)=C^*(s_a, p_A)$ and $C^*(E,\CL,\bE)=C^*(t_a, q_B)$,
$a\in \CA$, $A\in \CEa$, $B\in \bE$.
Then the map $\iota: C^*(E,\CL,\CEa)\to C^*(E,\CL,\bE)$
such that
$\iota(s_a)=t_a$ and $\iota(p_A)=q_A$, $A\in \CEa$, is an isomorphism
by Proposition~\ref{proposition-B1andB2}.
For any $[v]_l\in \bE$, there are two sets $A,\, B\in \CEa$ such that
$[v]_l=A\setminus B$.
Since $A=(A\setminus B)\cup (A\cap B)$ and $A\setminus B,\, A\cap B\in \bE$,
we have $q_A=q_{A\setminus B}+q_{A\cap B}$ and so
$$q_{[v]_l}= q_{A\setminus B}=q_A-q_{A\cap B}
=\iota(p_A-p_{A\cap B})\in \iota(C^*(E,\CL,\CEa)).$$
Hence $\iota$ is surjective by (\ref{eqn-bE}).
\end{proof}

\vskip 1pc

\section{Quotient labelled spaces and their $C^*$-algebras}

\vskip 1pc
\noindent
{\bf Assumption 3.} For the rest of the paper, we
assume that every labelled space $(E,\CL,\CB)$ is weakly left-resolving
and $\CB$ is closed under taking relative complement.

\vskip 1pc

\begin{dfn}
 Let $(E,\CL,\CB)$ be a  labelled space
  and $\sim_\textsc{R}$ an equivalence relation on $\CB$.
Denote the equivalence class of $A\in \CB$ by $[A]$
(or $[A]_\textsc{R}$ in case we need to specify the relation $\sim_\textsc{R}$)
and let
 $$ \CA_\textsc{R}:=\{a\in \CA:  [r(a)]\neq[\emptyset]\}.$$
If the following operations $\cup$,   $\cap$, and $\setminus$,
$$[A]\cup [B]:=[A\cup B],\  \ [A]\cap [B]:=[A\cap B],\ \ [A]\setminus [B]=[A\setminus B]$$
 are well-defined on the equivalence classes
 $\QCB_\textsc{R}:=\{[A]\,:\, A\in \CB\}$, and
if the {\it relative range},
$$r([A],\af):=[r(A,\af)],$$
is well-defined for $[A]\in \QCB_\textsc{R}$, $\af\in\CL^*(E)\cap (\CA_\textsc{R})^*$
so that  $r([A],\af) =[\emptyset]$ for all
 $\af\in\CL^*(E)\cap (\CA_\textsc{R})^*$ implies $[A]=[\emptyset]$,
 we call a triple $(E,\CL, \QCB_\textsc{R})$
 a {\it quotient labelled space} of $(E,\CL,\CB)$.
 Note  that $r([\emptyset],\af)=[\emptyset]$
 and that $[A]\setminus [B]=[\emptyset]$ whenever $[A]=[B]$.
 We say that $(E,\CL, \QCB_\textsc{R})$ is {\it weakly left-resolving}
 if $r([A],\af)\cap r([B],\af)=r([A]\cap [B],\af)$ holds for all
 $[A],[B]\in \QCB_\textsc{R}$
 and $\af\in \CL^*(E)\cap (\CA_\textsc{R})^*$.
\end{dfn}

\vskip 1pc
\noindent
  A labelled space itself is a quotient labelled space
  with the relation of equality and $\CA_\textsc{R}=\CA$.
 For a nontrivial and important example of quotient labelled spaces,
 see Proposition~\ref{prop-quotientspace} below.

In a similar way to Definition~\ref{definition-representation},
we define  a representation of a quotient labelled space as follows.
\vskip 1pc

\begin{dfn}\label{definition-qrepresentaion}
Let $(E,\CL,\QCB_\textsc{R})$ be a weakly left-resolving
quotient labelled space of a  labelled space $(E,\CL,\CB)$.
A {\it representation} of $(E, \CL, \QCB_\textsc{R})$ consists of
projections  $\{p_{[A]} \,:\, [A] \in \QCB_\textsc{R}\}$ and
partial isometries $\{s_a \,:\, a \in \CA_\textsc{R} \}$
subject to the relations:
\begin{enumerate}
\item $p_{[\emptyset]} =0$,  $p_{[A]} p_{[B]} = p_{[A] \cap [B]}$,
and $p_{[A]\cup [B]} = p_{[A]} + p_{[B]} - p_{[A]\cap [B]}$,
\item  $p_{[A]} s_a = s_a p_{r([A],a)}$,
\item $s_a^* s_a = p_{[r(a)]}$ and $s_a^* s_b=0$
unless $a=b$,
\item for $[A]\in \QCB_\textsc{R}$,
if $\CL([A]E^1)\cap\CA_\textsc{R}$ is non-empty, then
$$p_{[A]} =\sum_{a\in \CL([A]E^1)\cap\CA_\textsc{R}} s_ap_{r([A],a)}s_a^*.$$
\end{enumerate}
\end{dfn}
\end{dfn}

\vskip 1pc
\noindent In the expression
$p_{[A]} =\sum_{a\in \CL([A]E^1)\cap\CA_\textsc{R}} s_ap_{r([A],a)}s_a^*$
of Definition~\ref{definition-qrepresentaion}.(4),
$s_ap_{r([A],a)}s_a^*=0$ for all but finitely many
$a\in \CL([A]E^1)\cap\CA_\textsc{R}$.
In fact, if $[A]=[A']$ and $a\in \CL(AE^1)\setminus \CL(A'E^1)$,
then from $r([A],a)=r([A'],a)$ we must have
$[r(A,a)]=[r(A',a)]=[\emptyset]$ and hence $p_{r([A],a)}=0$.
Thus $s_ap_{r([A],a)}s_a^*\neq 0$ only when $a\in \cap_{[A']=[A]}\CL(A'E^1)$,
but the set $\cap_{[A']=[A]}\CL(A'E^1)$ is finite since we
assume that $(E,\CL,\CB)$ is set-finite.

\vskip 1pc

\begin{dfn}
Let $H $ be a  subset of an accommodating set $\CB$.
$H$ is said to be {\it hereditary}
if $H$ satisfies the following:
\begin{enumerate}
\item[(i)] $r(A,\af) \in H$ for all $A  \in H , \af \in \CL^{*}(E)$,
\item[(ii)]    $A \cup B \in H$ for all $A, B \in H $,
\item[(iii)] if $A \in H, B \in \CB$ with $B \subset A$, then $ B \in H$.
\end{enumerate}
By (iii) we see that $H$ is closed under finite intersections, and
moreover  $A \backslash B \in H$ for $A \in H$ and $B\in\CB$
since $A \setminus B \subset A\in H$ and $A \backslash B \in \CB$.
 A hereditary subset $H$  of $\CB$ is called {\it saturated}
if for any $A \in \CB$, $\{ r(A, a) : a \in \CA  \} \subset H$ implies that $A \in H$.
We write $\overline{H}$ for the smallest hereditary saturated set containing $H$.
\end{dfn}

\vskip 1pc

\begin{lem}\label{lemma-HI}
Let $I$ be a nonzero ideal in $C^{*}(E,\CL,\CB)=C^*(s_a,p_A)$.
Then the set
$$H_{I} := \{ A \in \CB \,:\, p_{A} \in I \}$$ is  hereditary and
saturated. Moreover, if $I$ is gauge-invariant,
 $H_{I} \neq \{ \emptyset \}$.
\end{lem}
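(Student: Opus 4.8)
The plan is to verify the three defining properties of a hereditary set for $H_I$, then the saturation property, and finally handle the gauge-invariance claim separately.

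\medskip

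\noindent\textbf{Heredity.} First I would check conditions (i)--(iii). For (i), take $A\in H_I$ and $\af\in\CL^*(E)$, so $p_A\in I$. Using the relation $s_\af^* p_A s_\af = s_\af^* s_\af p_{r(A,\af)} = p_{r(\af)}p_{r(A,\af)} = p_{r(A,\af)}$ (since $r(A,\af)\subset r(\af)$, the identity (2.2) and Definition~\ref{definition-representation}(iii) give this), we get $p_{r(A,\af)}=s_\af^* p_A s_\af\in I$ because $I$ is an ideal; hence $r(A,\af)\in H_I$. For (ii), if $A,B\in H_I$ then $p_{A\cup B}=p_A+p_B-p_{A\cap B}=p_A+p_B-p_Ap_B\in I$, so $A\cup B\in H_I$. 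For (iii), if $A\in H_I$, $B\in\CB$ with $B\subset A$, then $p_B=p_Ap_B=p_{A\cap B}$ — more directly $p_B = p_{A\cap B} = p_A p_B\in I$ since $p_A\in I$ and $I$ is an ideal; hence $B\in H_I$.

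\medskip

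\noindent\textbf{Saturation.} Suppose $A\in\CB$ and $r(A,a)\in H_I$ for every $a\in\CA$. I want $p_A\in I$. If $\CL(AE^1)$ is finite and non-empty (which holds by the set-finiteness Assumption~2 applied to any accommodating set, except in the degenerate case $\CL(AE^1)=\emptyset$), then Definition~\ref{definition-representation}(iv) gives $p_A=\sum_{a\in\CL(AE^1)} s_a p_{r(A,a)}s_a^*$; each summand lies in $I$ because $p_{r(A,a)}\in I$ and $I$ is an ideal, so $p_A\in I$ and $A\in H_I$. The only thing to check is that $\CL(AE^1)$ cannot be empty: since $E$ has no sinks by Assumption~1 and $A\ne\emptyset$ is the interesting case (for $A=\emptyset$ trivially $p_\emptyset=0\in I$), every vertex in $A$ emits an edge, so $AE^1\ne\emptyset$ and thus $\CL(AE^1)\ne\emptyset$. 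Hence $H_I$ is saturated.

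\medskip

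\noindent\textbf{Gauge-invariance gives $H_I\ne\{\emptyset\}$.} This is the step I expect to be the main obstacle, since it requires extracting a nonzero projection $p_A$ (with $A\in\CB$) from a nonzero gauge-invariant ideal. The idea is to use the faithful conditional expectation $\Phi$ onto the fixed-point algebra $C^*(E,\CL,\CB)^\gm$ coming from averaging over the gauge action: $\Phi(x)=\int_{\mathbb T}\gm_z(x)\,dz$. Because $I$ is gauge-invariant, $\Phi(I)\subset I$, and since $I\ne 0$ and $\Phi$ is faithful, $\Phi(I)\ne 0$; pick $0\ne x\in\Phi(I)\subset I\cap C^*(E,\CL,\CB)^\gm$. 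Using the spanning description (\ref{eqn-elements}) and the fact that $\Phi$ kills all terms $s_\af p_A s_\bt^*$ with $|\af|\ne|\bt|$, the fixed-point algebra is the closed span of elements $s_\af p_A s_\bt^*$ with $|\af|=|\bt|$, organized as an increasing union of finite-dimensional-like building blocks indexed by path length; a standard argument (as in the graph $C^*$-algebra case, cf. \cite{BPRS}) produces from $x$ a nonzero element of the form $s_\af p_A s_\af^*$ in $I$ for suitable $\af\in\CL^*(E)$ and $\emptyset\ne A\in\CB$ with $A\subset r(\af)$. Then $p_A = s_\af^*(s_\af p_A s_\af^*)s_\af\in I$ (using $s_\af^* s_\af p_A = p_{r(\af)}p_A = p_A$), and $p_A\ne 0$ since $A\ne\emptyset$. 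Therefore $A\in H_I$ with $A\ne\emptyset$, so $H_I\ne\{\emptyset\}$. The delicate point in this last argument is the norm estimate that lets one pass from a general nonzero fixed element to one supported on a single ``diagonal'' block $s_\af p_A s_\af^*$; I would model it on the corresponding reduction in the graph algebra literature, taking care that the projections $p_A$ here are indexed by sets in $\CB$ rather than single vertices.
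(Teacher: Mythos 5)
Your verification of heredity and saturation matches the paper's proof essentially line for line (the paper checks heredity only for single labels $a\in\CA$ and lets the identity $r(A,\af a)=r(r(A,\af),a)$ handle general paths, while you work with $\af\in\CL^*(E)$ directly; both are fine, and your remark that $\CL(AE^1)\neq\emptyset$ because $E$ has no sinks is a worthwhile addition). The real divergence is in the last part. The paper does not extract a projection from $I$ at all: it argues contrapositively, observing that if $H_I=\{\emptyset\}$ then the quotient map $\pi\colon C^*(E,\CL,\CB)\to C^*(E,\CL,\CB)/I$ sends every $p_A$ with $A\neq\emptyset$ to a nonzero projection and intertwines the gauge action with the induced action on the quotient, so the gauge-invariant uniqueness theorem (Theorem~\ref{theorem-gaugeinvariant}) forces $\pi$ to be faithful and hence $I=\{0\}$, a contradiction. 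Your route --- average over the gauge action, land in $I\cap C^*(E,\CL,\CB)^\gm$, and dig out a diagonal block $s_\af p_A s_\af^*$ --- is precisely the machinery living inside the proof of that uniqueness theorem, so you are re-deriving what the paper simply cites. As written, your argument has a gap exactly where you flag it: the passage from a nonzero element of $I\cap C^*(E,\CL,\CB)^\gm$ to a nonzero $s_\af p_A s_\af^*\in I$ is the nontrivial step, and in the labelled-graph setting it is not a verbatim copy of the graph-algebra reduction of \cite{BPRS}. One needs that the core is AF with finite-dimensional blocks spanned by elements $s_\af p_A s_\bt^*$ with $|\af|=|\bt|$, together with the classification of minimal projections of such blocks (the analogue of Lemma~\ref{lemma-minimalprojection}, i.e.\ \cite[Lemma 5.2]{BP1}), including the delicate case where the minimal projection is a difference $s_\af p_A s_\af^*-q'$ and one must pass to a subordinate nonzero $s_{\af\bt}p_{r(A,\bt)}s_{\af\bt}^*$ before concluding $p_{r(A,\bt)}\in I$. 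All of this can be filled in, so your approach is salvageable, but the paper's two-line appeal to the already-established uniqueness theorem avoids it entirely.
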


\begin{proof}
To show that $H_{I}$ is hereditary, let $A \in H_{I}$.
Then $p_{A}s_{a} = s_{a}p_{r(A, a)}\in I$,
so that $p_{r(A, a)} = p_{r(a)}p_{r(A, a)} = s_{a}^{*}s_{a}p_{r(A, a)} \in I$
and $r(A, a) \in H_{I}$  for all $a \in \CA$.
Also if $A, B \in H_{I}$, $p_{A \cup B} = p_{A} + p_{B} - p_{A}p_{B}$ is in $I$,
that is, $A \cup B \in H_{I}$.
If $A \in H_{I}$ and $B \in \CB$ with $B \subset A$,
then $p_{B} = p_{A\cap B}=p_{A}p_{B} \in I$ and $B\in H_{I}$.

Now let $A \in \CB$ and $r(A, a)\in H_{I}$ for all $a \in \CA$.
Then the projection $p_{A} = \sum_{a \in \CL(AE^{1})} s_{a}p_{r(A, a)}s_{a}^{*}$
belongs to $I$, that is, $A \in H_{I}$ and the hereditary set $H_{I}$ is saturated.

Finally, let $I$ be a gauge-invariant ideal of $C^{*}(E,\CL,\CB)$
with $H_{I} = \{ \emptyset \}$.
If $\pi: C^{*}(E,\CL,\CB)\to C^{*}(E,\CL,\CB) / I$ is the quotient homomorphism,
$\{\pi(p_A),\, \pi(s_a)\}$ is a representation of $C^{*}(E,\CL,\CB)$
such that the projections $\pi(p_A)$ are nonzero
for $A\neq \emptyset$ and the action  $\gm'$ on $C^{*}(E,\CL,\CB)/I$
induced by the gauge action $\gm$ on $C^{*}(E,\CL,\CB)$
satisfies that $\gm' \circ \pi = \pi \circ \gm$ (note that $\gm_{z}(I) \subset I$).
By  Theorem~\ref{theorem-gaugeinvariant}, $\pi$ is faithful and so $I =\{0\}$.
\end{proof}

\vskip 1pc

\begin{prop}\label{prop-quotientspace}
Let $I$ be a nonzero gauge-invariant ideal of $C^*(E,\CL,\CB)$.
Then the relation
$$A \sim_I B \Longleftrightarrow A \cup W = B \cup W \ \text{for some } W \in H_I$$
defines an equivalence relation $\sim_I$ on $\CB$ such that
$(E,\CL,\QCB_I)$ is a weakly left-resolving quotient labelled space
of $(E,\CL,\CB)$.
\end{prop}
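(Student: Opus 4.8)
The plan is to check in sequence that $\sim_I$ is an equivalence relation, that the operations $\cup,\cap,\setminus$ and the relative range descend to the set of classes $\QCB_I$, that the separation requirement in the definition of a quotient labelled space holds, and finally that $(E,\CL,\QCB_I)$ is weakly left-resolving. Throughout I would use that $H_I$ is hereditary and saturated (Lemma~\ref{lemma-HI}), that $\emptyset\in H_I$ (since $p_\emptyset=0\in I$), and the elementary set identities $r(X\cup Y,\af)=r(X,\af)\cup r(Y,\af)$, $(X\cup W)\cap(Y\cup W)=(X\cap Y)\cup W$ and $(X\cup W)\setminus(Y\cup W)=(X\setminus Y)\setminus W$, which are immediate from the definitions of Section~2.

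For the equivalence relation: reflexivity holds with witness $\emptyset\in H_I$, symmetry is clear, and for transitivity, if $A\cup W_1=B\cup W_1$ and $B\cup W_2=C\cup W_2$ with $W_i\in H_I$, then $W:=W_1\cup W_2\in H_I$ by heredity and $A\cup W=C\cup W$. For well-definedness of the operations, take $A\sim_I A'$, $B\sim_I B'$ and a common witness $W\in H_I$ (the union of the two given witnesses). Then $(A\cup B)\cup W=(A\cup W)\cup(B\cup W)=(A'\cup B')\cup W$, $(A\cap B)\cup W=(A\cup W)\cap(B\cup W)=(A'\cap B')\cup W$, and $(A\setminus B)\cup W=\bigl((A\cup W)\setminus(B\cup W)\bigr)\cup W=(A'\setminus B')\cup W$; these three sets all lie in $\CB$ because $\CB$ is closed under finite unions, finite intersections, and (Assumption~3) relative complements. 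The side remark $[A]\setminus[B]=[\emptyset]$ for $[A]=[B]$ follows since $A\cup W=B\cup W$ forces $A\setminus B\subset W$, hence $A\setminus B\in H_I$.

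For the relative range I would first record that $a\in\CA_I$ if and only if $r(a)\notin H_I$: indeed $[r(a)]_I=[\emptyset]_I$ says $r(a)\subset W$ for some $W\in H_I$, which by heredity is the same as $r(a)\in H_I$. If $A\sim_I A'$ with witness $W$, then for any $\af\in\CL^*(E)$, $r(A,\af)\cup r(W,\af)=r(A\cup W,\af)=r(A',\af)\cup r(W,\af)$ with $r(W,\af)\in H_I$ by heredity, so $r(A,\af)\sim_I r(A',\af)$; this makes $r([A],\af):=[r(A,\af)]$ well defined (and $r(A,\af)\in\CB$ as $\CB$ is accommodating). The step with real content is the separation property: assuming $r([A],\af)=[\emptyset]$, i.e. $r(A,\af)\in H_I$, for every $\af\in\CL^*(E)\cap(\CA_I)^*$, I must deduce $A\in H_I$. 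I may assume $A\neq\emptyset$, so $\CL(AE^1)$ is non-empty (no sinks) and finite, and $p_A=\sum_{a\in\CL(AE^1)}s_ap_{r(A,a)}s_a^*$. Fix $a\in\CL(AE^1)$: if $a\in\CA_I$ then $a\in\CL^*(E)\cap(\CA_I)^*$, so $r(A,a)\in H_I$ by hypothesis; if $a\notin\CA_I$ then $r(a)\in H_I$ and $r(A,a)\subset r(a)$, so $r(A,a)\in H_I$ by heredity. In both cases $p_{r(A,a)}\in I$, hence $s_ap_{r(A,a)}s_a^*\in I$, so $p_A\in I$, i.e. $A\in H_I$, i.e. $[A]=[\emptyset]$.

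Finally, for weak left-resolvingness, let $[A],[B]\in\QCB_I$ and $\af\in\CL^*(E)\cap(\CA_I)^*$. Since $(E,\CL,\CB)$ is weakly left-resolving (standing hypothesis), $r(A,\af)\cap r(B,\af)=r(A\cap B,\af)$ on the nose, so
\[
r([A],\af)\cap r([B],\af)=[r(A,\af)\cap r(B,\af)]=[r(A\cap B,\af)]=r([A]\cap[B],\af).
\]
The only real obstacle is the separation property in the third paragraph; it reduces, via the expansion $p_A=\sum_a s_ap_{r(A,a)}s_a^*$, to the single observation that the letters $a\notin\CA_I$ are exactly those with $r(a)\in H_I$. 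The remaining steps are just transporting Boolean identities across the hereditary saturated set $H_I$.
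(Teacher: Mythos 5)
Your proof is correct and follows essentially the same route as the paper's: common witnesses for transitivity and well-definedness, the identity $r(A\cup W,\af)=r(A,\af)\cup r(W,\af)$ for the relative range, and the same splitting of letters into $\CA_I$ and its complement for the separation property. The only cosmetic difference is that in the separation step you unfold the Cuntz--Krieger relation $p_A=\sum_a s_ap_{r(A,a)}s_a^*$ directly, whereas the paper simply cites the saturation of $H_I$ from Lemma~\ref{lemma-HI}, which encapsulates exactly that computation.
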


\begin{proof}
Clearly $\sim_I$ is reflexive and symmetric.
It is  transitive since
\begin{align*}
&\ A \sim_I B\text{ and }  B \sim_I C\\
\Longrightarrow &\ A \cup W = B \cup W \text{ and } B \cup V = C \cup V
 \ \text{for some } W,\, V\in H_I\\
\Longrightarrow &\   A \cup ( W \cup V)= C \cup (W \cup V)\\
\Longrightarrow &\  A \sim_I C \ \text{because } W\cup V\in H_I.
\end{align*}
 To see that we have well-defined operations
 $\cup$, $\cap$ and $\setminus$ on $\QCB_I$,
 let $[A]=[A']$ and $[B]=[B']$.
Choose $W,\,V \in  H_I$ such that $A \cup W =A' \cup W$ and $B \cup V = B' \cup V$.
Then
\begin{align*}
(A \cup B)\cup (W \cup V) &= (A' \cup B')\cup (W \cup V),\\
(A \cap B)\cup (W \cup V) &=(A\cup (W \cup V)) \cap( B\cup (W \cup V))\\
&=(A'\cup (W \cup V)) \cap( B'\cup (W \cup V))\\
&= (A' \cap B')\cup (W \cup V),\\
(A\setminus B)\cup(W\cup V) &=(A'\setminus B')\cup(W\cup V).
\end{align*}
Thus
$[A]\cup[B] =[A']\cup[B']$,
$[A]\cap[B] =[A']\cap[B']$, and
$[A]\setminus [B] =[A']\setminus [B']$.

We claim that $[r(A,\af)]=[r(A',\af)]$ for $[A]=[A']$
and $\af\in \CL^*(E)\cap \CA_I^*$, where
$\CA_I =\{a\in \CA:[r(a)]\neq [\emptyset]\}= \{a\in \CA : p_{r(a)}\notin I\}$.
Let $A\cup W=A'\cup W$ for $W\in H_I$.
Then
$r(A,\af) \cup r(W,\af)=r(A\cup W, \af)
=r(A'\cup W, \af)=r(A',\af) \cup r(W,\af)$.
Since $r(W,\af)\in H_I$,  we have $[r(A,\af)]=[r(A',\af)]$
and see that the relative ranges $r([A],\af)$ are well-defined.

If $r([A],\af)=[\emptyset]$ for all $\af\in \CL^*(E)\cap \CA_I^*$, then
 $r(A,a)\in H_I$ for all $a\in \CA_I$.
 Since  $r(A,a)\in H_I$  for all $a\notin \CA_I$ and $H_I$ is saturated,
 $A\in H_I$, that is, $[A]=[\emptyset]$ follows.

  Finally, $\QCB_I$ is weakly left-resolving since
$ r( [A], \af) \cap r([ B], \af)
= [r( A, \af)] \cap [r( B, \af)]
= [r( A, \af) \cap r( B, \af)]
= [r( A\cap B,\af)]
= r( [A\cap B],\af)
= r( [A]\cap[ B],\af)$.
\end{proof}

\vskip 1pc

\begin{lem}\label{lemma-IH}
Let $H$ be a hereditary subset of $\CB$.
Then the ideal $I_{H}$ of $C^{*}(E,\CL,\CB)$
generated by the projections $\{ p_A \,:\, A \in H \}$ is gauge-invariant and
$$I_{H}= I_{\bH} =\overline{span}\{ s_{\af}p_{A}s_{\bt} \,:\, \af, \bt \in \CL^{*}(E), A \in \overline H \}.$$
\end{lem}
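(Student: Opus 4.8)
The statement has three parts to establish: (a) the spanning-set description of $I_H$ via $\overline{H}$, (b) gauge-invariance of $I_H$, and (c) the equality $I_H = I_{\overline H}$. I would organize the proof around (a), from which (b) and (c) follow easily. First I would let $J := \overline{span}\{ s_\af p_A s_\bt^* : \af,\bt\in\CL^*(E),\ A\in\overline H\}$ and argue that $J$ is a (closed two-sided) ideal of $C^*(E,\CL,\CB)$. The key point here is that $J$ is closed under left and right multiplication by the generators $s_a$, $s_a^*$, and $p_B$ for $B\in\CB$. Using the relations in Definition~\ref{definition-representation} and the formula $s_\sm p_A s_\sm^* = \sum_{\gamma\in\CL(AE^n)} s_{\sm\gamma} p_{r(A,\gamma)} s_{\sm\gamma}^*$ (which rewrites products of the spanning elements, just as in \cite{BP1}), a product like $s_b \cdot s_\af p_A s_\bt^*$ reduces to one of: another element of the same form with first label-path prefixed by $b$; or, when $b$ is the first letter of $\bt = b\bt'$, an element $p_{r(b)} \cdot s_{\bt'} \cdots$ hence again of that form after absorbing $p_{r(b)}$; or a term $s_\af p_{r(A,\gamma)} s_\cdots$ with $r(A,\gamma)\in\overline H$ by part~(i) of the definition of hereditary (note $\overline H$ is hereditary by construction). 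In every case the new ``middle'' projection is indexed by a set still in $\overline H$ because $\overline H$ is closed under relative ranges and under passing to subsets within $\CB$. So $J$ is an ideal containing each $p_A$, $A\in H$, whence $I_H\subset I_{\overline H}\subset J$.

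For the reverse inclusion $J\subset I_H$, it suffices to show $s_\af p_A s_\bt^*\in I_H$ for each $A\in\overline H$, and since $s_\af p_A s_\bt^* = s_\af p_A \cdot p_A s_\bt^*$ it is enough that $p_A\in I_H$ for all $A\in\overline H$. That is, I must show $\{A\in\CB : p_A\in I_H\}\supseteq\overline H$. But by Lemma~\ref{lemma-HI}, $H_{I_H} := \{A\in\CB : p_A\in I_H\}$ is a hereditary saturated subset of $\CB$, and it contains $H$ by definition of $I_H$; since $\overline H$ is the \emph{smallest} hereditary saturated set containing $H$, we get $\overline H\subseteq H_{I_H}$, i.e. $p_A\in I_H$ for all $A\in\overline H$. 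This closes the loop: $J\subseteq I_H\subseteq I_{\overline H}\subseteq J$, so all three coincide and equal the claimed span.

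Finally, gauge-invariance of $I_H$ is immediate once we know $I_H = \overline{span}\{s_\af p_A s_\bt^* : A\in\overline H\}$: the gauge action satisfies $\gm_z(s_\af p_A s_\bt^*) = z^{|\af|-|\bt|} s_\af p_A s_\bt^*$, so each spanning element is an eigenvector, hence $\gm_z(I_H)\subseteq I_H$ for all $z\in\T$; equivalently one may simply note $\gm_z(p_A) = p_A$, so $\gm_z$ fixes the generators of $I_H$. I expect the main obstacle to be the bookkeeping in showing $J$ is an ideal: one needs the identity \eqref{eqn-projectionsum} to handle the ``mismatched length'' cases when multiplying $s_\af p_A s_\bt^*$ by $s_\gamma p_B s_\dt^*$, and one must check carefully that in the term where $\bt$ and $\gamma$ have a common prefix the resulting middle set lies in $\overline H$ — this uses precisely that $\overline H$ is closed under relative ranges (part (i)) and under intersection with arbitrary $\CB$-sets (which follows from heredity, as noted in the Definition). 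The rest is formal once that lemma-style computation is in place.
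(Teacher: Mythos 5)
Your proposal is correct and follows essentially the same route as the paper: define $J=\overline{span}\{s_\af p_A s_\bt^*: A\in\bH\}$, check it is a gauge-invariant ideal, use Lemma~\ref{lemma-HI} to get $\bH\subset H_{I_H}$ (hence $J\subset I_H$), and close the chain $J\subset I_H\subset I_{\bH}\subset J$. The only difference is that you spell out the multiplication bookkeeping that the paper dismisses as ``easy to see,'' which is fine.
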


\begin{proof}
By Lemma~\ref{lemma-HI}, $H_{I_{H}}= \{ A \in \bE \,:\, p_{A} \in I_{H} \}$
is a hereditary saturated subset of $\CB$
and $H \subset H_{I_{H}}$.
Thus $\bH \subset H_{I_{H}}$.
It is easy to see that
$J:=\overline{span} \{ s_{\af}p_{A}s_{\bt} \,:\, \af, \bt \in \CL^{*}(E), A \in \bH \}$
is a gauge-invariant ideal of $C^{*}(E,\CL,\CB)$ such that  $J\subset I_{H}$.
But $J$ contains the generators $\{ p_{A} \,:\, A \in \bH \}$ of $I_{\bH}$.
Hence $I_{\bH}\subset J$.
\end{proof}

\vskip 1pc

Let $(E,\CL,\QCB_\textsc{R} )$  be a quotient labelled space.
As in \cite{BP1}, we set
 $$\QCB_\textsc{R}^*=(\CL^*(E)\cap (\CA_\textsc{R})^*)\cup \QCB_\textsc{R}$$ and
extend $r,s$ to $\QCB_\textsc{R}^*$ by
$r([A])=[A]$ and $s([A])=[A]$ for $[A]\in \QCB_\textsc{R}$.
Also put $s_{[A]}=p_{[A]}$ for $[A]\in \QCB_\textsc{R}$  so that
$s_\bt$ is defined for all $\bt\in \QCB_\textsc{R}^*$.
The following  lemma can be proved by the same arguments
in \cite[Lemma 4.4]{BP1}.

\vskip 1pc

\begin{lem}\label{lemma-elements}
Let $(E,\CL,\QCB_\textsc{R})$ be a weakly left-resolving quotient labelled space
and $\{s_a, p_{[A]}\}$ a representation of $(E,\CL,\QCB_\textsc{R})$.
Then any nonzero products of $s_a$, $p_{[A]}$, and $s_\bt^*$ can be
written as a finite linear combination of elements of the form
$ s_a p_{[A]} s_\bt^*$ for some $A\in \QCB_\textsc{R}$
and $\af,\bt\in \QCB_\textsc{R}^*$ with
$[A]\subset [r(\af)\cap r(\bt)]\neq [\emptyset].$
Moreover we have the following:
$$(s_\af p_{[A]} s_\bt^*)(s_\gm p_{[B]} s_\dt^*)=\left\{
                      \begin{array}{ll}
                        s_{\af\gm'}p_{r([A],\gm')\cap [B]} s_\dt^*, & \hbox{if\ } \gm=\bt\gm' \\
                        s_{\af}p_{[A]\cap r([B],\bt')} s_{\dt\bt'}^*, & \hbox{if\ } \bt=\gm\bt'\\
                        s_\af p_{[A]\cap [B]}s_\dt^*, & \hbox{if\ } \bt=\gm\\
                        0, & \hbox{otherwise.}
                      \end{array}
                    \right.
$$
\end{lem}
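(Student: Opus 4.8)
**The plan is to mimic the proof of \cite[Lemma 4.4]{BP1} almost verbatim, the only new feature being that empty relative ranges force certain summands to vanish.** First I would record the structural relations that are immediate from the definition of a representation of $(E,\CL,\QCB_\textsc{R})$: namely $s_a = s_a p_{[r(a)]}$ and $p_{[A]} s_a = s_a p_{r([A],a)}$, so that $p_{[A]} s_\af = s_\af p_{r([A],\af)}$ for any $\af\in \CL^*(E)\cap(\CA_\textsc{R})^*$ by an induction on $|\af|$, where $r([A],\af)$ is computed iteratively via $r([A],\af b) = r(r([A],\af),b)$ (this uses that $\sim_\textsc{R}$ respects relative ranges, already part of the definition). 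I would also note $s_a^* p_{[A]} = s_a^* p_{[A]} s_a s_a^* = p_{r([A],a)} s_a^*$, which transports projections past a $s_a^*$ from the left.

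Next I would argue that any finite word $w$ in the symbols $\{s_a, s_a^*, p_{[A]}\}$ is either zero or can be pushed, using relations (2) and (3) of Definition~\ref{definition-qrepresentaion} together with the two transport identities above, into the canonical shape $s_\af p_{[A]} s_\bt^*$. The mechanism is the standard one: whenever an $s_a^*$ is immediately followed by an $s_b$, relation (3) either annihilates the word (if $a\ne b$) or replaces $s_a^* s_b$ by $p_{[r(a)]}$; projections are then collected in the middle using $p_{[A]} p_{[B]} = p_{[A]\cap[B]}$ from (1); and any projection trapped on the wrong side of an $s_\gm$ or $s_\gm^*$ is transported to the middle by $p_{[A]} s_\gm = s_\gm p_{r([A],\gm)}$ and $s_\gm^* p_{[A]} = p_{r([A],\gm)} s_\gm^*$. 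Since we adjoined $s_{[A]} := p_{[A]}$ to the alphabet, the edge case of a pure projection word is subsumed. Once the canonical form $s_\af p_{[A]} s_\bt^*$ is reached, I would observe it is zero unless $[A] \subset r([\af]) \cap r([\bt])$: indeed $s_\af p_{[A]} = s_\af p_{[r(\af)]} p_{[A]} = s_\af p_{[r(\af)]\cap[A]}$, and similarly on the right, so a nonzero product forces $[A] = [A]\cap[r(\af)]\cap[r(\bt)]$, i.e. $[A]\subset[r(\af)\cap r(\bt)]$, and this class is $\ne[\emptyset]$ because $s_\af p_{[A]} s_\bt^* \ne 0$ would otherwise be contradicted by $p_{[A]}=p_{r([r(\af)],\dots)}$ being $0$ via relation (3) chains — more cleanly, $p_{[A]}\ne 0$ is needed for the word to be nonzero, and $p_{[\emptyset]}=0$.

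For the multiplication formula I would simply compute $(s_\af p_{[A]} s_\bt^*)(s_\gm p_{[B]} s_\dt^*)$ by cases on how $\bt$ and $\gm$ compare as words. If $\gm = \bt\gm'$, then $s_\bt^* s_\gm = s_\bt^* s_\bt s_{\gm'} = p_{[r(\bt)]} s_{\gm'}$, and pushing $p_{[A]}$ and $p_{[r(\bt)]}$ through $s_{\gm'}$ via the transport identity gives $s_{\af\gm'} p_{r([A],\gm')} p_{r([r(\bt)],\gm')} p_{[B]} s_\dt^* = s_{\af\gm'} p_{r([A],\gm')\cap[B]} s_\dt^*$, absorbing the $r([r(\bt)],\gm')$ factor since $[A]\subset[r(\bt)]$ implies $r([A],\gm')\subset r([r(\bt)],\gm')$ by monotonicity of relative ranges (which follows from the weakly-left-resolving hypothesis: $r([A]\cap[X],\af)=r([A],\af)\cap r([X],\af)$). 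The case $\bt = \gm\bt'$ is symmetric by taking adjoints, and $\bt=\gm$ is the degenerate overlap giving $s_\af p_{[A]\cap[B]} s_\dt^*$; all remaining configurations of $\bt,\gm$ produce $s_\bt^* s_\gm = 0$ by relation (3). \textbf{The one place needing a little care — and the closest thing to an obstacle — is verifying that the various transported projections really do collapse as claimed, i.e. that identities like $r([A],\af) = [r(A,\af)]$ interact correctly with intersections};\ but this is exactly guaranteed by the weakly-left-resolving assumption on $(E,\CL,\QCB_\textsc{R})$ built into the hypothesis, together with well-definedness of the operations on $\QCB_\textsc{R}$, so the argument of \cite[Lemma 4.4]{BP1} goes through with only cosmetic changes.
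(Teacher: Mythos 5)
Your proposal is correct and follows essentially the same route as the paper, which proves this lemma simply by invoking the argument of \cite[Lemma 4.4]{BP1}; your reconstruction of that argument (transport identities $p_{[A]}s_\af=s_\af p_{r([A],\af)}$ and its adjoint, reduction of arbitrary words to the canonical form $s_\af p_{[A]}s_\bt^*$, and the case analysis on prefix relations between $\bt$ and $\gm$) is exactly what that citation supplies, with the quotient-specific points (well-definedness of $r([A],\af)$ and the weakly left-resolving identity for intersections) handled as you indicate.
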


\vskip 1pc

\begin{thm}\label{theorem-quotientalgebra}
Let $(E,\CL,\CB)$ be a  labelled space
and $I$ be a nonzero gauge-invariant ideal of $C^*(E,\CL,\CB)$.
Then there exists a $C^*$-algebra $ C^*(E,\CL,\QCB_I)$ generated by
a universal representation  $\{t_a,p_{[A]}\}$ of $(E,\CL,\QCB_I)$.
Furthermore  $p_{[A]}\neq 0$ for  $[A] \neq [\emptyset]$
and  $t_a\neq 0$ for $a\in \CA_I$.
\end{thm}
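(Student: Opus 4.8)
The plan is to build a concrete representation of $(E,\CL,\QCB_I)$ out of the quotient algebra $C^*(E,\CL,\CB)/I$, and then invoke a universal-algebra construction together with the already-established machinery to get the universal $C^*$-algebra. First I would let $\pi\colon C^*(E,\CL,\CB)\to C^*(E,\CL,\CB)/I$ be the quotient map, write $C^*(E,\CL,\CB)=C^*(s_a,p_A)$, and set $T_a:=\pi(s_a)$ and $Q_{[A]}:=\pi(p_A)$. The first task is to check that $Q_{[A]}$ is \emph{well defined} on equivalence classes: if $A\sim_I B$, say $A\cup W=B\cup W$ with $W\in H_I$, then $p_{A\cup W}=p_{B\cup W}$ in $C^*(E,\CL,\CB)$, and since $p_W\in I$ we get $\pi(p_{A\cup W})=\pi(p_A)+\pi(p_W)-\pi(p_Ap_W)=\pi(p_A)$, and similarly $\pi(p_{B\cup W})=\pi(p_B)$; hence $Q_{[A]}=Q_{[B]}$. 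Then I would verify that $\{T_a : a\in\CA_I\}\cup\{Q_{[A]} : [A]\in\QCB_I\}$ satisfies relations (1)–(4) of Definition~\ref{definition-qrepresentaion}. Relations (1)–(3) are immediate from applying $\pi$ to the corresponding relations in Definition~\ref{definition-representation} (using that $\pi$ is a $*$-homomorphism and that Proposition~\ref{prop-quotientspace} makes $\cup,\cap,\setminus$ and the relative range well defined). For relation (4), one starts from $p_A=\sum_{a\in\CL(AE^1)}s_ap_{r(A,a)}s_a^*$, applies $\pi$, and observes that the terms with $a\notin\CA_I$ die because $Q_{r([A],a)}=\pi(p_{r(A,a)})=0$ exactly when $p_{r(a)}\in I$, i.e.\ $a\notin\CA_I$ — this is the same finiteness/vanishing bookkeeping already carried out in the remark following Definition~\ref{definition-qrepresentaion}.

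Next I would establish the existence of the universal object. The relations (1)–(4) are of the standard form (finitely many algebraic $*$-relations with the generators forced to be partial isometries and projections, hence norm-bounded), so the usual free-product / universal-$C^*$-algebra construction applies verbatim as in \cite[Theorem 4.5]{BP1}: form the free $*$-algebra on symbols $s_a$ ($a\in\CA_I$) and $p_{[A]}$ ($[A]\in\QCB_I$), quotient by the relations, and complete in the supremum of $C^*$-seminorms over all Hilbert-space representations; relation (3) ($s_a^*s_a=p_{[r(a)]}$, a projection) bounds $\|s_a\|\le 1$ and relation (1) bounds $\|p_{[A]}\|\le 1$, so this supremum is finite and the completion $C^*(E,\CL,\QCB_I)$ exists with a universal representation $\{t_a,p_{[A]}\}$. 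By Lemma~\ref{lemma-elements} this algebra is densely spanned by the elements $s_\af p_{[A]}s_\bt^*$, so in particular it is generated by $\{t_a,p_{[A]}\}$.

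The real content — and the step I expect to be the main obstacle — is the final non-triviality claim: $p_{[A]}\neq 0$ for $[A]\neq[\emptyset]$ and $t_a\neq 0$ for $a\in\CA_I$. Here the point of the Hilbert-space representation from the first paragraph pays off. The assignment $t_a\mapsto T_a$, $p_{[A]}\mapsto Q_{[A]}$ extends, by the universal property of $C^*(E,\CL,\QCB_I)$ just constructed, to a $*$-homomorphism $\Phi\colon C^*(E,\CL,\QCB_I)\to C^*(E,\CL,\CB)/I$. So it suffices to show $Q_{[A]}=\pi(p_A)\neq 0$ whenever $[A]\neq[\emptyset]$, equivalently $A\notin H_I$, and $T_a=\pi(s_a)\neq 0$ for $a\in\CA_I$. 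For the projections: $A\notin H_I$ means precisely $p_A\notin I=\ker\pi$, so $\pi(p_A)\neq0$ by definition of $H_I$ (Lemma~\ref{lemma-HI}). For the partial isometries: if $a\in\CA_I$ then $p_{r(a)}\notin I$, and since $\pi(s_a)^*\pi(s_a)=\pi(s_a^*s_a)=\pi(p_{r(a)})\neq 0$, necessarily $\pi(s_a)\neq 0$. Thus $\Phi$ carries each generator to a nonzero element, forcing the generators of $C^*(E,\CL,\QCB_I)$ to be nonzero. (One should double-check here only that the relation "$A\notin H_I \Leftrightarrow [A]\neq[\emptyset]$" really holds: $[A]=[\emptyset]$ means $A\cup W=W$ for some $W\in H_I$, i.e.\ $A\subseteq W$; since $H_I$ is hereditary this gives $A\in H_I$, and conversely $A\in H_I$ gives $A\sim_I\emptyset$ with $W=A$ — so the equivalence is exactly the hereditary property already recorded in Proposition~\ref{prop-quotientspace} and Lemma~\ref{lemma-HI}.) Everything else is routine; the only mild subtlety is making sure, in constructing $\Phi$, that one has already checked $\{T_a,Q_{[A]}\}$ is a genuine representation in the sense of Definition~\ref{definition-qrepresentaion}, which is why the first paragraph does that verification before the existence construction is invoked.
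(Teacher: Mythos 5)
Your proposal is correct and follows essentially the same route as the paper: existence of $C^*(E,\CL,\QCB_I)$ via the standard universal construction of \cite[Theorem 4.5]{BP1}, and non-triviality of the generators by mapping them, through the universal property, onto the representation $\{s_a+I,\,p_A+I\}$ inside $C^*(E,\CL,\CB)/I$ and using $p_A\in I\Leftrightarrow A\in H_I\Leftrightarrow [A]=[\emptyset]$. The extra verifications you spell out (well-definedness of $Q_{[A]}$ and the vanishing of the terms with $a\notin\CA_I$ in relation (4)) are details the paper leaves implicit, but the argument is the same.
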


\begin{proof}
The existence of the $C^*$-algebra $C^*(E,\CL,\QCB_I)$ with
the desired universal property can be shown by  the same argument
in the first part of the proof of \cite[Theorem 4.5]{BP1},
and here we show the second assertion of our theorem.
If $C^*(E,\CL,\CB)=C^*(s_a,p_A)$, it is easy to see that
$\{s_a+I,\ p_A +I :  a\in \CA_I, \, [A]\in \QCB_I\}$ is a representation of
$C^*(E,\CL,\QCB_I)$, hence there is a homomorphism
$\psi :   C^*(E,\CL,\QCB_I)  \to C^*(E,\CL,\CB)/I$ such that
$$\psi(t_a)=s_a+I, \qquad \psi(p_{[A]})=p_A + I.$$
If $\psi(p_{[A]})=p_A+I= I$, then $p_A \in I$ and it follows that $A \in H_I$, that is,
 $[A]=[\emptyset]$.
Thus if $[A] \neq [\emptyset]$ then $\psi(p_{[A]}) \neq  I$, and so $p_{[A]}\neq 0$.
If  $\psi(t_a)=s_a+I= I$, then  $s_a^* s_a +I= p_{r(a)}+I= I$, and so
$[r(a)]=[\emptyset]$, that is, $a\notin \CA_I$.
Thus if $a\in \CA_I$, then $\psi(t_a)\neq I$, hence $t_a\neq 0$.
\end{proof}

\vskip 1pc

The following theorem together with
Corollary~\ref{cor-isomorphism} and Theorem~\ref{theorem-main}
shows that the $C^*$-algebra $C^*(E,\CL,\QCB_\textsc{R})$
of a weakly left-resolving quotient labelled space $(E,\CL,\QCB_\textsc{R})$
is always isomorphic to a $C^*$-algebra $C^*(E, \CL, \QCB_I)$
for some gauge-invariant ideal $I$ of $C^*(E,\CL,\CB)$.

\vskip 1pc

\begin{thm}\label{theorem-quotientspace}
Let $(E,\CL,\QCB_\textsc{R})$ be a weakly left-resolving
quotient labelled space of $(E,\CL,\CB)$.
Then there exists a $C^*$-algebra $C^*(E,\CL,\QCB_\textsc{R})$
generated by a universal representation
$\{t_b, q_{[A]}\}$ of $(E,\CL,\QCB_\textsc{R})$ such that
$q_{[A]}\neq 0$ for $[A]\neq [\emptyset]$
and $t_b\neq 0$ for $b\in \CA_\textsc{R}$.
Moreover the ideal $I$ of $C^*(E,\CL,\CB)=C^*(s_a,p_A)$ generated by
the projections $p_A$, $[A]=[\emptyset]$, is gauge-invariant and
there exists a surjective homomorphism
$$\phi: C^*(E,\CL,\QCB_\textsc{R})\to C^*(E,\CL,\CB)/I$$ such that
$\phi(t_b)=s_b+I$ and $\phi(q_{[A]})=p_A+I$.
\end{thm}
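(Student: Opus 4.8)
The plan is to build the universal $C^*$-algebra by the standard construction (as in \cite[Theorem 4.5]{BP1}), verify the nontriviality of the generators, and then produce the homomorphism $\phi$ by exhibiting a representation of $(E,\CL,\QCB_\textsc{R})$ inside $C^*(E,\CL,\CB)/I$.

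First I would address the existence of $C^*(E,\CL,\QCB_\textsc{R})$. Following the first part of the proof of \cite[Theorem 4.5]{BP1} verbatim (or as in Theorem~\ref{theorem-quotientalgebra}), one forms the free $*$-algebra on symbols $\{t_b, q_{[A]}\}$, quotients by the relations of Definition~\ref{definition-qrepresentaion}, and takes the supremum over all Hilbert-space representations to get a $C^*$-seminorm; Lemma~\ref{lemma-elements} guarantees this is finite because every word reduces to a finite span of elements $t_\af q_{[A]} t_\bt^*$, each of norm $\le 1$. Completing gives $C^*(E,\CL,\QCB_\textsc{R})$ with its universal representation. For the nontriviality $q_{[A]}\neq 0$ and $t_b\neq 0$, rather than constructing a concrete faithful representation directly, I would defer: once $\phi$ is built and shown surjective, and once Theorem~\ref{theorem-quotientalgebra} is available, one checks that the composition realizing $C^*(E,\CL,\CB)/I$ as $C^*(E,\CL,\QCB_I)$ (via the isomorphism relations promised in the remark preceding the statement, i.e. the forthcoming Corollary~\ref{cor-isomorphism}/Theorem~\ref{thm-uniqueness} machinery) sends $q_{[A]}$ to a nonzero projection when $[A]\neq[\emptyset]$. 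Alternatively, and more self-containedly, I would note that $\{s_a+I, p_A+I\}$ (see the next paragraph) is a representation of $(E,\CL,\QCB_\textsc{R})$ with $p_A+I\neq I$ whenever $[A]\neq[\emptyset]$ — this is precisely the content of the $H_I$-description — so by the universal property the canonical map $q_{[A]}\mapsto p_A+I$ is defined, and nontriviality of the universal $q_{[A]}$ follows from nontriviality of its image. The same argument handles $t_b$ via $s_b^*s_b = p_{r(b)}$.

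Next, the ideal $I$. It is generated by $\{p_A : [A]=[\emptyset]\}$. Gauge-invariance is immediate since each generator $p_A$ is fixed by $\gm_z$, and the ideal generated by gauge-fixed elements is gauge-invariant (as in the proof of Lemma~\ref{lemma-IH}). To build $\phi$, I would check that $\{s_b+I : b\in\CA_\textsc{R}\}\cup\{p_A+I : [A]\in\QCB_\textsc{R}\}$ satisfies the four relations of Definition~\ref{definition-qrepresentaion} in $C^*(E,\CL,\CB)/I$. Relations (1)–(3) are inherited directly from the corresponding relations in $C^*(E,\CL,\CB)$, reading them modulo $I$ and using that $[A]=[B]$ forces $p_A-p_B\in I$ (because $A\cup W = B\cup W$ with $[W]=[\emptyset]$ gives $p_{A\cup W}=p_{B\cup W}$ and $p_W\in I$), so $p_A+I$ depends only on $[A]$; similarly $r([A],a)$ is well-defined modulo $I$ by Proposition~\ref{prop-quotientspace}. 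The one relation requiring care is (4): in $C^*(E,\CL,\CB)$ we have $p_A = \sum_{a\in\CL(AE^1)} s_a p_{r(A,a)} s_a^*$, and passing to the quotient we must show the sum over the possibly-larger index set $\CL(AE^1)$ collapses to the sum over $\CL([A]E^1)\cap\CA_\textsc{R}$. But for $a\notin\CA_\textsc{R}$ we have $p_{r(a)}\in I$, hence $s_a p_{r(A,a)}s_a^* = s_a p_{r(a)} p_{r(A,a)}s_a^* \in I$; and for $a\in\CA_\textsc{R}$ with $[r(A,a)]=[\emptyset]$ likewise $p_{r(A,a)}\in I$. So modulo $I$ only the terms indexed by $\CL([A]E^1)\cap\CA_\textsc{R}$ survive, giving relation (4). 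By the universal property there is a homomorphism $\phi$ with $\phi(t_b)=s_b+I$, $\phi(q_{[A]})=p_A+I$. Surjectivity follows from (\ref{eqn-elements}): $C^*(E,\CL,\CB)/I$ is spanned by the images $s_\af p_A s_\bt^* + I = \phi(t_\af q_{[A]} t_\bt^*)$.

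The main obstacle I anticipate is the bookkeeping in relation (4) combined with the well-definedness issues: one must be careful that $\CL([A]E^1)$ (which by the discussion after Definition~\ref{definition-qrepresentaion} should really be read via $\cap_{[A']=[A]}\CL(A'E^1)$ up to terms that vanish) is handled consistently, and that finiteness of the surviving index set — needed for the sum in (4) to make sense — is exactly the set-finiteness hypothesis already in force (Assumption 2). A secondary point is that, strictly, the cleanest route to $q_{[A]}\neq 0$ is to invoke the gauge-invariant uniqueness theorem for quotient labelled spaces (Theorem~\ref{thm-uniqueness}) once it is available, but since the statement only asks for the existence of $C^*(E,\CL,\QCB_\textsc{R})$, the nontriviality of generators, gauge-invariance of $I$, and the surjection $\phi$, the self-contained argument via the representation $\{s_b+I,p_A+I\}$ suffices and avoids circularity.
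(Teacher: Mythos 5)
Your overall architecture matches the paper's: build the universal algebra, check that $\{s_b+I,\,p_A+I\}$ is a representation of $(E,\CL,\QCB_\textsc{R})$ (your treatment of relation (4), splitting off the terms with $a\notin\CA_\textsc{R}$ or $[r(A,a)]=[\emptyset]$, is exactly right), get $\phi$ by universality, and read off surjectivity from (\ref{eqn-elements}). The gauge-invariance of $I$ is also handled as in the paper.

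However, there is one genuine gap, and it sits at the technical heart of the theorem: the claim that $p_A\notin I$ whenever $[A]\neq[\emptyset]$ (the paper's (\ref{eqn-projectioninideal})). You assert this is ``precisely the content of the $H_I$-description,'' but that is circular. Lemma~\ref{lemma-HI} tells you that $H_I=\{A:p_A\in I\}$ is hereditary and saturated and \emph{contains} the hereditary saturated set $H=\{A:[A]=[\emptyset]\}$ that generates $I$; it does not tell you the containment is an equality, i.e.\ it does not rule out that the ideal generated by $\{p_B:[B]=[\emptyset]\}$ swallows some $p_A$ with $[A]\neq[\emptyset]$. Without this, $\phi(q_{[A]})=p_A+I$ could be zero, and your nontriviality claims for $q_{[A]}$ and $t_b$ collapse. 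Your first alternative (deferring to Theorem~\ref{theorem-quotientalgebra} and the forthcoming Corollary~\ref{cor-isomorphism}) is circular for the same reason: identifying $\QCB_\textsc{R}$ with $\QCB_I$ requires knowing exactly which $p_A$ lie in $I$. The paper closes this gap with a direct norm estimate inside $C^*(E,\CL,\CB)$: assuming $p_A\in I$ with $[A]\neq[\emptyset]$, it approximates $p_A$ within distance $1$ by a finite sum $\sum_i c_i s_{\af_i}p_{B_i}s_{\bt_i}^*$ with all $[B_i]=[\emptyset]$, expands $p_A=\sum_{|\gm|=l}s_\gm p_{r(A,\gm)}s_\gm^*$ using (\ref{eqn-projectionsum}), compresses by $s_{\gm_0}^*(\cdot)s_{\gm_0}$ for a path $\gm_0$ with $[r(A,\gm_0)]\neq[\emptyset]$, and then multiplies by the projection $p_{r(A,\gm_0)\setminus\cup_i B_i}$ (nonzero since $[r(A,\gm_0)]\neq[\emptyset]$ forces $r(A,\gm_0)\not\subset\cup_iB_i$) to obtain the contradiction $1>1$. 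Some argument of this kind is indispensable and is missing from your proposal.
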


\begin{proof}
One can show the existence of $C^*(E,\CL,\QCB_\textsc{R})$ with the
universal property as usual.

Let $C^*(E,\CL,\CB)=C^*(s_a,p_A)$, $a\in \CA$, $A\in \CB$,
and let $C^*(E,\CL,\QCB_R)=C^*(t_b,q_{[A]})$,
$b\in \CA_R$, $[A]\in \QCB_\textsc{R}$.
One can easily see that the ideal $I$ generated by the projections $p_A$,
$[A]=[\emptyset]$, is gauge-invariant.

 Now we show that $\CA_\textsc{R}=\CA_I$ (recall
 $\CA_\textsc{R}=\{a\in \CA: [r(a)]\neq [\emptyset]\}$ and
 $\CA_I:=\{a\in \CA: p_{r(a)}\notin I\})$.
 $\CA_I\subset \CA_\textsc{R}$ follows from the fact that
  $p_{r(a)}\in I$ whenever $[r(a)]=[\emptyset]$ by definition of $I$.
 To prove the reverse inclusion, we first show that
 \begin{eqnarray}\label{eqn-projectioninideal}
 p_A\notin I \ \text{ when }\ [A]\neq [\emptyset],
 \end{eqnarray}
 then, since $a\in \CA_\textsc{R}$ if and only if  $[r(a)]\neq [\emptyset]$,
 by  (\ref{eqn-projectioninideal}) $a\in \CA_\textsc{R}$ implies
 $p_{r(a)}\notin I$, thus $a\in \CA_I$.
 To prove (\ref{eqn-projectioninideal}),
 we suppose $[A]\neq [\emptyset]$ and $p_A\in I$.
 It is not hard to see from (\ref{eqn-elements})
 that the span of elements of the form
 $s_\af p_B s_\bt^*$, $[B]=[\emptyset]$, is dense in $I$,
 so we can find $c_i\in \mathbb{C}$, $\af_i,\, \bt_i\in \CL^*(E)$, and
 $B_i\in \CB$ with $[B_i]=[\emptyset]$
 and $B_i\subset r(\af_i)\cap r(\bt_i)$ for $i=1,\cdots, n$ such that
 $$1>\| p_A -\sum_{i=1}^n c_i s_{\af_i} p_{B_i} s_{\bt_i}^*\|.$$
  Using (\ref{eqn-projectionsum})
  we may assume that the lengths $|\af_i|$, $1\leq i\leq n$,
 are all equal to, say $l$, and write $p_A$ as a finite sum
 $$p_A=\sum_{|\gm|=l} s_\gm p_{r(A,\gm)}s_\gm^*.$$
 Since $[A]\neq [\emptyset]$, there exists a $\gm_0$ such that
 $|\gm_0|=l$ and $[r(A,\gm_0)]\neq [\emptyset]$.
 Then $r(A,\gm_0)\setminus \cup_{i=1}^n B_i\neq \emptyset$, and we have
 \begin{align*}
 1 & >\ \| p_A -\sum_{i=1}^n c_i s_{\af_i} p_{B_i} s_{\bt_i}^*\| \\
   & =\ \| \sum_{|\gm|=l} s_\gm p_{r(A,\gm)}s_\gm^* -\sum_{i=1}^n c_i s_{\af_i} p_{B_i} s_{\bt_i}^*\| \\
   & \geq\ \| s_{\gm_0}^*(\sum_{|\gm|=l} s_\gm p_{r(A,\gm)}s_\gm^*)s_{\gm_0}
   -s_{\gm_0}^*(\sum_{i=1}^n c_i s_{\af_i} p_{B_i} s_{\bt_i}^*)s_{\gm_0}\| \\
   & \ \ \ \ \ (\Lambda:=\{i: \af_i=\gm_0\})\\
   & =\ \| p_{r(A,\gm_0)}-\sum_{i\in \Lambda}c_i  p_{B_i} s_{\bt_i}^*s_{\gm_0}\|\\
   & \geq \ \|p_{r(A,\gm_0)\setminus (\cup_{i\in \Lambda}B_i)}
   \big(p_{r(A,\gm_0)} -\sum_{i\in \Lambda} c_i p_{B_i} s_{\bt_i}^*s_{\gm_0}\big)\|\\
   & =\ \|p_{r(A,\gm_0)\setminus (\cup_{i\in \Lambda}B_i)}\|\\
   & =\ 1,
 \end{align*}
 a contradiction.

 Set $T_a:=s_a+I$ and $Q_{[A]}:=p_A+I$ for
 $a\in \CA_\textsc{R}(=\CA_I)$ and $[A]\in \QCB_R$.
Here  $Q_{[A]}$ is well-defined since
$[A]=[B]$ implies $p_A-p_B\in I$.
In fact, since  $[A]=[B]$ implies $[A\setminus B]=[\emptyset]=[B\setminus A]$,
 we have $p_{A\setminus B}$, $p_{B\setminus A}\in I$, hence
 $p_A-p_B=(p_{A\setminus B}+p_{A\cap B})-(p_{B\setminus A}+p_{A\cap B})
 =p_{A\setminus B}-p_{B\setminus A} \in I$.
Note that  $Q_{[\emptyset]}=p_{\emptyset}+I=I$ and
$Q_{[A]}Q_{[B]}=(p_A+I)(p_B+I)=p_A p_B +I=p_{A\cap B}+I
= Q_{[A\cap B]}= Q_{[A]\cap [B]}$.
Similarly, $Q_{[A]\cup [B]}=  Q_{[A]} + Q_{[B]}-Q_{[A]\cap [B]}$
and (3), (4) of Definition~\ref{definition-qrepresentaion} hold,
which shows that  $\{T_a, Q_{[A]}\}$ is a representation of $(E,\CL,\QCB_\textsc{R})$.
Thus by the universal property there exists a homomorphism
$$\phi: C^*(E,\CL,\QCB_\textsc{R})\to C^*(E,\CL,\CB)/I$$ such that
$\phi(t_a)=s_a+I$ and $\phi(q_{[A]})=p_A +I$ for
$a\in \CA_\textsc{R}$ and $[A]\in\QCB_\textsc{R}$.
 Since $C^*(E,\CL,\CB)/I$ is generated by
 $\{s_a+I,\, p_A+I : a\in \CA_I,\ [A]\neq [\emptyset]\}$
 and $\CA_I=\CA_\textsc{R}$,
 it follows that $\rho$ is surjective.

If  $[A]\neq [\emptyset]$,  by  (\ref{eqn-projectioninideal})
$p_A\notin I$, hence $\phi(q_{[A]})=p_A+I\neq I$.
Thus $q_{[A]}\neq 0$.
If $b\in \CA_\textsc{R}$, namely $[r(b)]\neq [\emptyset]$,
then $\phi(t_b^* t_b)=s_b^* s_b+I=p_{r(b)}+I \neq I$
again by  (\ref{eqn-projectioninideal}).
Hence $t_b\neq 0$ in $C^*(E,\CL,\QCB_\textsc{R})$.
\end{proof}

\vskip 1pc

\begin{dfn}
We call the $C^*$-algebra $C^*(E,\CL,\QCB_\textsc{R})$
of Theorem~\ref{theorem-quotientspace}
the {\it quotient labelled graph $C^*$-algebra}.
\end{dfn}

\vskip 1pc

\section{Gauge-invariant Uniqueness Theorem for $C^*(E,\CL,\QCB)$}

\vskip 1pc

By the universal property, it follows that
every quotient labelled graph $C^*$-algebra
$C^*(E,\CL,\QCB_\textsc{R})=C^*(s_a,p_{[A]})$
admits the {\it gauge action} $\gm$ of $\mathbb{T}$ such that
$$\gm_z(s_a)=z s_a \ \text{ and }\ \gm_z(p_{[A]})=p_{[A]}$$
for $a\in \CA_\textsc{R}$ and $[A]\in \QCB_\textsc{R}$.

 The gauge-invariant uniqueness theorem for quotient labelled graph
 $C^*$-algebras can be proved by the same arguments used
 in the proof of \cite[Theorem 5.3]{BP1} for
 the $C^*$-algebras of labelled spaces, but for the convenience of readers
 we give a sketch of the proof  with some minor corrections
 to the proof of \cite[Lemma 5.2]{BP1} and \cite[Theorem 5.3]{BP1}.

\vskip 1pc

\begin{lem}\label{lemma-minimalprojection}
Let $(E,\CL,\QCB_\textsc{R})$ be a weakly left-resolving quotient labelled space
of a  labelled space $(E,\CL,\CB)$,
$\{s_a, p_{[A]}\}$ a representation of $(E,\CL,\QCB_\textsc{R})$, and
$Y=\{s_{\af_i}p_{[A_i]}s_{\bt_i}^*: i=1,\dots, N\}$ be a set of
partial isometries in $C^*(E,\CL,\QCB_\textsc{R})$ which is closed under
multiplication and taking adjoints.
Then any minimal projection of $C^*(Y)$ is equivalent to
a minimal projection $q$ in $C^*(Y)$ that is  either
\begin{enumerate}
\item[(i)] $q=s_{\af_i}p_{[A_i]} s_{\af_i}^*$ for some $1\leq i\leq N$;
\item[(ii)] $q=s_{\af_i}p_{[A_i]} s_{\af_i}^*-q'$, where
$q'=\sum_{l=1}^m s_{\af_{k(l)}}p_{[A_{k(l)}]} s_{\af_{k(l)}}^*$ and $1\leq i\leq N$;
moreover there is a nonzero $r=s_{\af_i\bt} p_{[r(A_i,\bt )]}s_{\af_i\bt}^*\in C^*(E,\CL,\QCB_\textsc{R})$
such that $q'r=0$ and $q\geq r$.
\end{enumerate}
\end{lem}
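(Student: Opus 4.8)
The plan is to follow the structure of the proof of \cite[Lemma 5.2]{BP1}, adapting each step to the quotient labelled space setting and inserting the corrections alluded to in the preceding paragraph. First I would fix a minimal projection $q_0$ of the finite-dimensional $C^*$-algebra $C^*(Y)$; since $C^*(Y)$ is generated by the finite set $Y$ of partial isometries closed under multiplication and adjoints, every element of $C^*(Y)$ is a linear combination of the $s_{\af_i}p_{[A_i]}s_{\bt_i}^*$, and the products among these are governed by the multiplication formula in Lemma~\ref{lemma-elements}. The key observation is that the diagonal subalgebra $D:=C^*(Y)\cap \overline{\mathrm{span}}\{s_{\af_i}p_{[A_i]}s_{\af_i}^*\}$ is commutative, being spanned by the commuting projections $e_i:=s_{\af_i}p_{[A_i]}s_{\af_i}^*$, and that $q_0$ is Murray--von Neumann equivalent in $C^*(Y)$ to a minimal projection lying in $D$ (conjugate $q_0$ by a suitable partial isometry $s_{\af_i}p_{[A_i]}s_{\bt_i}^*$ whose initial space meets the range of $q_0$, using $(s_{\af}p_{[A]}s_{\bt}^*)^*(s_{\af}p_{[A]}s_{\bt}^*)=s_{\bt}p_{[A]}s_{\bt}^*$). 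So it suffices to analyse minimal projections of the commutative algebra generated by $\{e_1,\dots,e_N\}$.

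Next I would carry out the lattice analysis on the $e_i$. A minimal projection of the commutative algebra generated by $e_1,\dots,e_N$ has the form $\prod_{i\in S}e_i\cdot\prod_{j\notin S}(e-e_j)$ for suitable index sets, where $e$ is a unit for the relevant corner; rewriting using $e_ie_j = s_{\af_i}p_{[A_i]\cap r([r(A_i,\cdot)],\cdot)}\cdots$ — more precisely using the three-case product formula in Lemma~\ref{lemma-elements} — one sees each nonzero product $\prod_{i\in S}e_i$ is again of the form $s_{\af}p_{[B]}s_{\af}^*$ for some $\af$ among the $\af_i$ (or an extension thereof) and some $[B]\in\QCB_\textsc{R}$ obtained by intersecting relative ranges. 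This gives either case (i), when the minimal projection is already one of the $s_{\af_i}p_{[A_i]}s_{\af_i}^*$ (no subtraction needed), or case (ii), where $q = s_{\af_i}p_{[A_i]}s_{\af_i}^* - q'$ with $q'$ a sum $\sum_{l=1}^m s_{\af_{k(l)}}p_{[A_{k(l)}]}s_{\af_{k(l)}}^*$ of those $e_{k(l)}$ which sit below $e_i$. The point where \cite[Lemma 5.2]{BP1} needs correction is precisely the claim that such a $q$ is nonzero and that one can exhibit a witnessing subprojection: here one uses that $[A_i]\neq[\emptyset]$ together with the factorization property \eqref{eqn-projectionsum} (valid in $\QCB_\textsc{R}$ via Definition~\ref{definition-qrepresentaion}(4)) to find a labelled path $\bt\in\CL^*(E)\cap(\CA_\textsc{R})^*$ with $[r(A_i,\bt)]\neq[\emptyset]$ and such that $r:=s_{\af_i\bt}p_{[r(A_i,\bt)]}s_{\af_i\bt}^*$ is orthogonal to every $s_{\af_{k(l)}}p_{[A_{k(l)}]}s_{\af_{k(l)}}^*$ appearing in $q'$; weak left-resolvingness guarantees that the relevant relative-range intersections behave as set intersections after passing to equivalence classes, so $q'r=0$ and $r\leq q$, whence $q\neq 0$ and $q$ is minimal.

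Finally I would assemble these pieces: any minimal projection of $C^*(Y)$ is equivalent to a minimal projection in the diagonal algebra, which by the lattice computation is of type (i) or (ii), and in the type (ii) case the witness $r$ is produced as above. The main obstacle, and the place where care is genuinely required, is the nonvanishing argument in case (ii) — showing $s_{\af_i}p_{[A_i]}s_{\af_i}^*$ is not swallowed by the finite sum $q'$ of smaller diagonal projections. This is where the hypothesis $[A_i]\neq[\emptyset]$ is essential and where one must exploit the structure of $\QCB_\textsc{R}$: one needs that $[A_i]$ is not contained (in the quotient lattice) in the union of the $[A_{k(l)}]$, and the argument mirrors the contradiction argument in the proof of Theorem~\ref{theorem-quotientspace} establishing \eqref{eqn-projectioninideal}, namely iterating \eqref{eqn-projectionsum} to push down to a level $l$ where some relative range survives in $\QCB_\textsc{R}$ but is disjoint from the finitely many $B_{k(l)}$'s. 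Everything else — the reduction to the diagonal, the lattice bookkeeping, the product formula computations — is routine given Lemma~\ref{lemma-elements}.
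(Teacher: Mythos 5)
Your proposal takes essentially the same route as the paper's (deliberately brief) proof: reduce an arbitrary minimal projection of $C^*(Y)$ to an equivalent one in the diagonal part, put it in the normal form $s_{\af_i}p_{[A_i]}s_{\af_i}^* - \sum_{l}s_{\af_{k(l)}}p_{[A_{k(l)}]}s_{\af_{k(l)}}^*$ with the stated orthogonality/domination structure, and then produce the witness $r$ by iterating the sum relation of Definition~\ref{definition-qrepresentaion}(4) exactly as in the argument of \cite[Lemma 5.2]{BP1}. The paper's proof consists precisely of this normal-form statement followed by an appeal to the BP1 argument, so your more detailed outline is the same proof.
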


\begin{proof} A minimal projection  of the finite dimensional
$C^*$-algebra $C^*(Y)$ is unitarily equivalent to a projection of the form
$$\sum_{j=1}^n s_{\af_{i(j)}} p_{[A_{i(j)}]} s^*_{\af_{i(j)}}
-\sum_{l=1}^m s_{\af_{k(l)}} p_{[A_{k(l)}]} s^*_{\af_{k(l)}}$$
where the projections in each sum are mutually orthogonal
and for each $l$ there is a unique $j$ such that
$s_{\af_{i(j)}} p_{[A_{i(j)}]} s^*_{\af_{i(j)}}\geq
s_{\af_{k(l)}} p_{[A_{k(l)}]} s^*_{\af_{k(l)}}$.
Then the same argument of the proof of \cite[Lemma 5.2]{BP1} proves
the assertion.
\end{proof}

\vskip 1pc

\begin{thm}\label{thm-uniqueness}
Let $(E,\CL,\QCB_\textsc{R})$ be a weakly left-resolving quotient labelled space
and  $C^*(E,\CL,\QCB_\textsc{R})=C^*(s_a, p_{[A]})$.
Let $\{S_a, P_{[A]}\}$ be a representation of $(E,\CL,\QCB_\textsc{R})$
such that each $P_{[A]}\neq 0$ whenever $[A] \neq [\emptyset]$
and  $S_a\neq 0$ whenever $[r(a)]\neq [\emptyset]$.
If $\pi_{S,P}:C^*(E,\CL,\QCB_\textsc{R})\to C^*(S_a,P_{[A]})$
is the homomorphism satisfying
$\pi_{S,P}(s_a)=S_a$, $\pi_{S,P}(p_{[A]})=P_{[A]}$ and if
  there is a strongly continuous action $\bt$ of $\mathbb{T}$ on $C^*(S_a, P_{[A]})$
such that $\bt_z \circ \pi = \pi \circ \gamma_z$,
then $\pi_{S,P}$ is faithful.
\end{thm}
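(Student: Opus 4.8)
The plan is to follow the standard gauge-invariant uniqueness strategy, adapted from \cite[Theorem 5.3]{BP1}, using the gauge action to reduce the question of faithfulness on all of $C^*(E,\CL,\QCB_\textsc{R})$ to faithfulness on the fixed-point algebra. First I would use the action $\bt$ together with the invariant mean on $\mathbb{T}$ to define a faithful conditional expectation $\Phi$ onto the fixed-point algebra $C^*(E,\CL,\QCB_\textsc{R})^\gm$, and similarly a conditional expectation $\Phi'$ onto $C^*(S_a,P_{[A]})^\bt$; the intertwining relation $\bt_z\circ\pi_{S,P}=\pi_{S,P}\circ\gm_z$ gives $\Phi'\circ\pi_{S,P}=\pi_{S,P}\circ\Phi$. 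Since $\Phi$ is faithful, it suffices to prove that $\pi_{S,P}$ is isometric (equivalently injective) on the fixed-point algebra $C^*(E,\CL,\QCB_\textsc{R})^\gm$.

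Next I would identify the fixed-point algebra. Using Lemma~\ref{lemma-elements}, $C^*(E,\CL,\QCB_\textsc{R})$ is the closed span of the elements $s_\af p_{[A]} s_\bt^*$ with $[A]\subset [r(\af)\cap r(\bt)]\neq[\emptyset]$, and the gauge action scales such an element by $z^{|\af|-|\bt|}$; hence the fixed-point algebra is the closed span of those with $|\af|=|\bt|$. I would then write $C^*(E,\CL,\QCB_\textsc{R})^\gm$ as an increasing union of finite-dimensional $C^*$-subalgebras: for each fixed length $k$ and each finite set of labelled paths, the elements $s_{\af_i} p_{[A_i]} s_{\bt_i}^*$ with $|\af_i|=|\bt_i|=k$ generate a finite-dimensional $*$-algebra $C^*(Y)$ (closed under multiplication and adjoints by the multiplication formula in Lemma~\ref{lemma-elements}). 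Faithfulness on a $C^*$-algebra that is a direct limit of finite-dimensional algebras is equivalent to faithfulness on each finite-dimensional piece, which in turn amounts to $\pi_{S,P}$ being nonzero on every minimal projection of each such $C^*(Y)$.

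This is where Lemma~\ref{lemma-minimalprojection} enters and where the main work lies. Every minimal projection of $C^*(Y)$ is Murray--von Neumann equivalent (inside $C^*(Y)$) to one of the two canonical forms (i) or (ii) of that lemma, and since equivalent projections have the same image-nonvanishing behaviour under a $*$-homomorphism, it suffices to check the canonical forms. For type (i), $q=s_{\af_i} p_{[A_i]} s_{\af_i}^*$ with $[A_i]\neq[\emptyset]$; then $\pi_{S,P}(q)=S_{\af_i}P_{[A_i]}S_{\af_i}^*$, and since $S_{\af_i}$ is a partial isometry with $S_{\af_i}^*S_{\af_i}=P_{[r(\af_i)]}\geq P_{[A_i]}\neq 0$ (using the hypothesis that $P_{[A]}\neq 0$ for $[A]\neq[\emptyset]$), this image is nonzero. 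For type (ii), $q=s_{\af_i}p_{[A_i]}s_{\af_i}^*-q'$ with the auxiliary $r=s_{\af_i\bt}p_{[r(A_i,\bt)]}s_{\af_i\bt}^*\neq 0$ satisfying $q'r=0$ and $q\geq r$; then $\pi_{S,P}(q)\geq \pi_{S,P}(r)$, and $\pi_{S,P}(r)\neq 0$ by the same argument as in case (i) applied to $[r(A_i,\bt)]\neq[\emptyset]$, so $\pi_{S,P}(q)\neq 0$. The main obstacle is the bookkeeping needed to verify that the structure of minimal projections of $C^*(Y)$ is exactly as stated (this is the content one imports from the corrected proof of \cite[Lemma 5.2]{BP1}), and in particular that in case (ii) one can always find the subordinate projection $r$ of the stated form with $q'r=0$; once that combinatorial fact is in hand, the nonvanishing of $\pi_{S,P}$ on minimal projections follows cleanly from the nondegeneracy hypotheses on $\{S_a,P_{[A]}\}$, and hence $\pi_{S,P}$ is faithful.
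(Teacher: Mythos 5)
Your proposal is correct and follows essentially the same route as the paper: identify the fixed-point algebra as $\overline{\rm span}\{s_\af p_{[A]}s_\bt^*:|\af|=|\bt|\}$, exhibit it as an AF algebra via the finite-dimensional subalgebras $C^*(Y)$, reduce faithfulness there to nonvanishing on the minimal projections classified by Lemma~\ref{lemma-minimalprojection}, and then pass from the fixed-point algebra to the whole algebra using the averaged (conditional-expectation) estimate coming from the intertwining $\bt_z\circ\pi_{S,P}=\pi_{S,P}\circ\gm_z$. The only difference is cosmetic: you spell out the nonvanishing check $\pi_{S,P}(q)\neq 0$ in cases (i) and (ii), which the paper delegates to the proof of \cite[Theorem 5.3]{BP1}, and you phrase the final step via a faithful conditional expectation where the paper cites \cite[Lemma 2.2]{BKR}.
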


\begin{proof}
It is standard (for example, see the proof of \cite[Theorem 5.3]{BP1})
to show that the fixed point algebra $C^*(E,\CL,\QCB_\textsc{R})^\gm$
is equal to
$$ \overline{\rm span}\{ s_\af p_{[A]} s^*_\bt:
\af,\bt\in \CL^*(E)\cap (\CA_I)^*, \ |\af|=|\bt|, \text{ and } [
A]\in\QCB_\textsc{R} \}.$$
 Note that $C^*(E,\CL,\QCB_\textsc{R})^\gm$ is an AF algebra.
 In fact, if $Y$ is a finite subset of $C^*(E,\CL,\QCB_\textsc{R})^\gm$,
 each element $y\in Y$
 can be approximated by linear combinations of
 $s_\af p_{[A]} s^*_\bt$ with $|\af|=|\bt|$,
 hence we may assume that
 $Y=\{s_{\af_i} p_{[A_i]} s^*_{\bt_i}: |\af_i|=|\bt_i|,\ i=1,\dots,N\}$.
 Using Lemma~\ref{lemma-elements}, we may also assume that
 $Y$ is closed under multiplication and taking adjoints, so that
 $C^*(Y)=\overline{\rm span}(Y)$ is finite dimensional.
 Thus $C^*(E,\CL,\QCB_\textsc{R})^\gm$ is an AF algebra.

Now we show that
 $\pi:=\pi_{S,P}$ is faithful on  $C^*(E,\CL,\QCB_\textsc{R})^\gamma$.
 Let $\{Y_n:n\geq 1\}$ be an increasing family of finite subsets
 of  $C^*(E,\CL,\QCB_\textsc{R})^\gamma$ which are closed under multiplication
 and taking adjoints such that
 $C^*(E,\CL,\QCB_\textsc{R})^\gamma=\overline{\cup\, C^*(Y_n)}$.
 Suppose $\pi $ is not faithful on  $C^*(Y_n)$
 for some $Y_n=\{s_{\af_i} p_{[A_i]} s^*_{\bt_i}: i=1,\dots, N(n)\}$.
 Since $C^*(Y_n)$ is finite dimensional, the kernal of $\pi|_{C^*(Y_n)}$
 has a minimal projection.
 By Lemma~\ref{lemma-minimalprojection}, each minimal projection
 in the kernal of $\pi|_{C^*(Y_n)}$ is unitarily equivalent
 to a projection which is either $q=s_{\af_i} p_{[A_i]} s^*_{\bt_i}$ $(1\leq i\leq N(n))$
 or $q=s_{\af_i} p_{[A_i]} s^*_{\bt_i}-q'$,
 $q'=\sum_{k=1}^m s_{\af_{i(k)}} p_{[A_{i(k)}]} s^*_{\bt_{i(k)}}$ $(1\leq i\leq N(n))$.
 As in the proof of Theorem~\ref{theorem-gaugeinvariant}(\cite[Theorem 5.3]{BP1}),
 one obtains $\pi (q)\neq 0$  in each case.
 Then
 $\pi (uqu^*)\neq 0$ for any unitaty  $u\in C^*(Y_n)$,
 namely $\pi$ maps every minimal projection to a nonzero element and
 hence is faithful on $C^*(E,\CL,\QCB_\textsc{R})^\gm$.

  Therefore we conclude that $\pi$ is faithful
  by \cite[Lemma 2.2]{BKR} since the following  holds:
 For $a \in C^*(E,\CL,\QCB_\textsc{R})$,
  $$ \| \pi \big( \int_{\mathbb{T}} \gamma_z(a)dz \big)  \| \leq
  \int_{\mathbb{T}} \|\pi(\gm_z(a)\|dz=
   \int_{\mathbb{T}} \|\bt_z(\pi(a))\|dz= \| \pi(a)  \|.
  $$
\end{proof}

\vskip 1pc

\begin{cor}\label{cor-isomorphism}
Let $(E,\CL,\QCB_\textsc{R})$ be a weakly left-resolving quotient labelled space
of $(E,\CL,\CB)$ and let $C^*(E,\CL,\QCB_\textsc{R})=C^*(t_b, q_{[A]})$.
If $C^*(E,\CL,\CB)=C^*(s_a,p_A)$ and
$I$ is the ideal generated by the projections $q_{[A]}$, $[A]=[\emptyset]$,
there is a surjective isomorphism
$$\phi: C^*(E,\CL,\QCB_\textsc{R})\ \to\ C^*(E,\CL,\CB)/I$$
such that
$\phi(t_b)=s_b+I$, $\phi(q_{[A]})=p_A+I$ for $b\in \CA_\textsc{R}(=\CA_I)$,
$[A]\in \QCB_\textsc{R}$.
\end{cor}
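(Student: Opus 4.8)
The plan is to combine Theorem~\ref{theorem-quotientspace} with the gauge-invariant uniqueness theorem (Theorem~\ref{thm-uniqueness}). Theorem~\ref{theorem-quotientspace} already produces the surjective homomorphism $\phi\colon C^*(E,\CL,\QCB_\textsc{R})\to C^*(E,\CL,\CB)/I$ with $\phi(t_b)=s_b+I$ and $\phi(q_{[A]})=p_A+I$, and it also records that the generating representation $\{t_b,q_{[A]}\}$ satisfies $q_{[A]}\neq0$ for $[A]\neq[\emptyset]$ and $t_b\neq0$ for $b\in\CA_\textsc{R}$, together with the identification $\CA_\textsc{R}=\CA_I$. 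So the only thing left to prove is that $\phi$ is \emph{injective}; once that is done, surjectivity (already in Theorem~\ref{theorem-quotientspace}) finishes the corollary.

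To get injectivity I would apply Theorem~\ref{thm-uniqueness} to the representation $\{S_b,P_{[A]}\}:=\{s_b+I,\ p_A+I\}$ of $(E,\CL,\QCB_\textsc{R})$ inside $C^*(E,\CL,\CB)/I$, with $\phi$ playing the role of $\pi_{S,P}$. Two hypotheses of Theorem~\ref{thm-uniqueness} must be checked. First, $P_{[A]}=p_A+I\neq0$ whenever $[A]\neq[\emptyset]$: this is exactly equation~(\ref{eqn-projectioninideal}) from the proof of Theorem~\ref{theorem-quotientspace}, which says $p_A\notin I$ when $[A]\neq[\emptyset]$; similarly $S_b=s_b+I\neq0$ for $b\in\CA_\textsc{R}$ because $S_b^*S_b=p_{r(b)}+I\neq0$ by the same fact. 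Second, there must be a strongly continuous $\mathbb{T}$-action $\bt$ on $C^*(E,\CL,\CB)/I$ with $\bt_z\circ\phi=\phi\circ\gm_z$: since $I$ is gauge-invariant (noted in Theorem~\ref{theorem-quotientspace}), the gauge action $\gm$ on $C^*(E,\CL,\CB)$ descends to a strongly continuous action $\bt$ on the quotient with $\bt_z(x+I)=\gm_z(x)+I$, and one checks on generators that $\bt_z(\phi(t_b))=\bt_z(s_b+I)=zs_b+I=\phi(zt_b)=\phi(\gm_z(t_b))$ and $\bt_z(\phi(q_{[A]}))=p_A+I=\phi(\gm_z(q_{[A]}))$, so $\bt_z\circ\phi=\phi\circ\gm_z$. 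With both hypotheses verified, Theorem~\ref{thm-uniqueness} gives that $\phi$ is faithful, hence injective, and therefore an isomorphism onto its image, which equals $C^*(E,\CL,\CB)/I$ by surjectivity.

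The main obstacle is essentially cosmetic rather than mathematical: everything substantive has already been done in Theorem~\ref{theorem-quotientspace} (existence of $\phi$, surjectivity, gauge-invariance of $I$, nonvanishing of images of generators, and the key estimate~(\ref{eqn-projectioninideal})), and Theorem~\ref{thm-uniqueness} supplies the faithfulness criterion. So the proof is a short matter of assembling these pieces and verifying the intertwining of gauge actions on generators, which is routine. If anything, one should be slightly careful that the representation $\{s_b+I,p_A+I\}$ indexed by $b\in\CA_\textsc{R}$ and $[A]\in\QCB_\textsc{R}$ is genuinely a representation of the quotient labelled space (not just of $(E,\CL,\CB)$), but this too was already established in the course of proving Theorem~\ref{theorem-quotientspace} when $\{T_a,Q_{[A]}\}$ was shown to satisfy Definition~\ref{definition-qrepresentaion}. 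Thus the corollary follows immediately.
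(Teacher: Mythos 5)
Your proposal is correct and follows essentially the same route as the paper: invoke Theorem~\ref{theorem-quotientspace} for the surjective homomorphism $\phi$, then apply the gauge-invariant uniqueness theorem (Theorem~\ref{thm-uniqueness}) using the nonvanishing of $p_A+I$ for $[A]\neq[\emptyset]$ and the intertwining of the gauge actions coming from the gauge-invariance of $I$. Your added checks (that $\{s_b+I,\,p_A+I\}$ is genuinely a representation of the quotient labelled space, and the generator-level verification of $\bt_z\circ\phi=\phi\circ\gm_z$) are exactly the details the paper leaves implicit.
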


\begin{proof}
By Theorem~\ref{theorem-quotientspace}, we have a surjective homomorphism
$\phi$ with the desired properties except injectivity.
But it is injective by the gauge-invariant uniqueness theorem since
$\phi(q_{[A]})=p_A+I$ is nonzero for all $[A]\in \QCB_\textsc{R}$,
$[A]\neq [\emptyset]$,
and $\bt_z \circ \phi = \phi \circ \gamma_z$ for $z\in \mathbb{T}$,
where $\bt$ is the gauge action  on $C^*(E,\CL,\CB)/I$ induced by the
gauge action on $C^*(E,\CL,\CB)$ and $\gm$ is the gauge action on
$C^*(E,\CL,\QCB_\textsc{R})$.
\end{proof}

\vskip 1pc

\section{Gauge-invariant ideals of labelled graph $C^*$-algebras}

\vskip 1pc
Recall that for a labelled space $(E,\CL,\CB)$  and
a hereditary saturated subset $H$ of $\CB$,
$I_H$ denotes the ideal of $C^*(E,\CL,\CB)$ generated by
the projections $p_A$, $A\in H$ (see Lemma~\ref{lemma-IH}).

\vskip 1pc

\begin{lem}\label{lemma-injection}
Let $(E,\CL,\CB)$ be a labelled space.
Then the map $H \longmapsto I_{H}$ is an inclusion preserving injection
from the set of nonempty hereditary saturated subsets of $\CB$ into
the set of nonzero gauge-invariant ideals of $C^*(E,\CL,\CB)$.
\end{lem}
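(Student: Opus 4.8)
The statement has two parts: that $H \mapsto I_H$ is inclusion-preserving, and that it is injective. The inclusion-preserving part is immediate from the definitions: if $H_1 \subset H_2$ then the generating projections $\{p_A : A \in H_1\}$ of $I_{H_1}$ are among those generating $I_{H_2}$, so $I_{H_1} \subset I_{H_2}$. The gauge-invariance of each $I_H$ and the fact that $I_H = I_{\overline{H}}$ are already recorded in Lemma~\ref{lemma-IH}, so the map indeed lands in the set of gauge-invariant ideals; nonzeroness of $I_H$ for $H \neq \{\emptyset\}$ follows from the fact that $p_A \neq 0$ for $A \neq \emptyset$ (Remark~\ref{elements}). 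So the whole weight of the lemma is injectivity.

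For injectivity, the plan is to recover $H$ from $I_H$. The natural candidate is to show that for a hereditary saturated $H$,
$$H = H_{I_H} := \{A \in \CB : p_A \in I_H\}.$$
One inclusion, $H \subset H_{I_H}$, is trivial since $p_A \in I_H$ by definition for $A \in H$. The reverse inclusion $H_{I_H} \subset H$ is the crux: given $A \in \CB$ with $p_A \in I_H$, we must show $A \in H$. Once this equality is established, injectivity is immediate: if $I_{H_1} = I_{H_2}$ then $H_1 = H_{I_{H_1}} = H_{I_{H_2}} = H_2$.

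To prove $H_{I_H} \subset H$, I would pass to the quotient. Form the quotient labelled space $(E,\CL,\QCB_{I_H})$ associated to the gauge-invariant ideal $I_H$ via Proposition~\ref{prop-quotientspace}, and recall from Corollary~\ref{cor-isomorphism} (the second one, in Section 4) that the quotient map $C^*(E,\CL,\CB) \to C^*(E,\CL,\CB)/I_H \cong C^*(E,\CL,\QCB_{I_H})$ sends $p_A$ to the generator $q_{[A]}$, which is nonzero precisely when $[A] \neq [\emptyset]$, i.e.\ precisely when $A \notin H_{I_H}$. Now the equivalence relation $\sim_{I_H}$ is defined by $A \sim_{I_H} B$ iff $A \cup W = B \cup W$ for some $W \in H_{I_H}$; so $[A] = [\emptyset]$ means $A = A \cup \emptyset \sim_{I_H} \emptyset$, i.e.\ $A = A \cup W = W \in H_{I_H}$ for some $W \in H_{I_H}$, which just says $A \in H_{I_H}$ (using that $H_{I_H}$ is closed under taking subsets, since it is hereditary by Lemma~\ref{lemma-HI} — so $A \subset W \in H_{I_H}$ forces $A \in H_{I_H}$). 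This shows $H_{I_H}$ is exactly the set of $A$ with $[A]_{I_H} = [\emptyset]$, but does not yet give $H_{I_H} = H$.

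The genuine point is therefore to show $H_{I_H} \subset \overline{H} = H$ (the last equality because $H$ is already hereditary saturated). Here I would argue by contradiction along the lines of the proof of Theorem~\ref{theorem-quotientspace}: suppose $A \in H_{I_H} \setminus H$, so $p_A \in I_H$. Using Lemma~\ref{lemma-IH}, $I_H = \overline{\mathrm{span}}\{s_\af p_B s_\bt^* : \af,\bt \in \CL^*(E),\ B \in \overline{H} = H\}$, so $p_A$ is approximated within distance $1$ by a finite sum $\sum_i c_i s_{\af_i} p_{B_i} s_{\bt_i}^*$ with $B_i \in H$. By~(\ref{eqn-projectionsum}) one may assume all $|\af_i| = l$ equal, and expand $p_A = \sum_{|\gm|=l} s_\gm p_{r(A,\gm)} s_\gm^*$. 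Since $A \notin H$ and $H$ is saturated, there is $\gm_0$ with $|\gm_0| = l$ and $r(A,\gm_0) \notin H$ (otherwise saturation would force $A \in H$); in particular $r(A,\gm_0) \neq \emptyset$ and $r(A,\gm_0) \setminus \bigcup_i B_i \neq \emptyset$ because $B_i \in H$ and $H$ is hereditary, so $r(A,\gm_0) \subset \bigcup_i B_i$ would put $r(A,\gm_0) \in H$. Then compressing by $s_{\gm_0}^*(\cdot)s_{\gm_0}$ and then multiplying by $p_{r(A,\gm_0)\setminus(\cup_{i\in\Lambda}B_i)}$ (with $\Lambda = \{i : \af_i = \gm_0\}$) kills all the $B_i$ terms and leaves a norm-one projection, contradicting the strict inequality $1 > \|p_A - \sum c_i s_{\af_i} p_{B_i} s_{\bt_i}^*\|$. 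This gives $H_{I_H} \subset H$, hence equality, hence injectivity. The main obstacle is precisely this last contradiction argument — making sure the hereditary and saturation hypotheses on $H$ are used correctly to locate $\gm_0$ and to guarantee $r(A,\gm_0)\setminus\bigcup_i B_i \neq \emptyset$ — but it is essentially a repeat of the computation already carried out in Theorem~\ref{theorem-quotientspace}.
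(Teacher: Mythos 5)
Your proof is correct, and on the decisive point it is more careful than the paper's own. Both arguments reduce injectivity to the identity $H_{I_H}=H$, and both treat the inclusion-preserving property and the inclusion $H\subset H_{I_H}$ as immediate; the difference lies in how the reverse inclusion $H_{I_H}\subset H$ is obtained. The paper dispatches it in one line, deducing from the trivial fact $I_{H_J}\subset J$ (applied with $J=I_H$) that $I_{H_{I_H}}\subset I_H$ and asserting that this yields $H_{I_H}\subset H$; as written this is a non sequitur, since $I_{H_{I_H}}\subset I_H$ only says that the ideal generated by the projections lying in $I_H$ is contained in $I_H$ and carries no information about which $A\in\CB$ satisfy $p_A\in I_H$ --- extracting $H_{I_H}\subset H$ from it would presuppose the very injectivity being proved. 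What you do instead is supply the actual argument: using Lemma~\ref{lemma-IH} to write $I_H=\overline{\mathrm{span}}\{s_\af p_B s_\bt^*\,:\,B\in\bH=H\}$, assuming $A\notin H$ with $p_A\in I_H$, approximating $p_A$ within distance $1$ by a sum with all $B_i\in H$ and all $|\af_i|=l$, locating $\gm_0$ with $r(A,\gm_0)\notin H$ via saturation (iterated to length-$l$ paths, which uses $\emptyset\in H$, available by hereditarity), invoking hereditarity and closure under unions to get $r(A,\gm_0)\setminus\bigcup_i B_i\neq\emptyset$, and compressing by $s_{\gm_0}^*(\cdot)s_{\gm_0}$ and the complementary projection to exhibit a nonzero projection whose norm is bounded by a quantity less than $1$. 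This is precisely the computation the paper carries out in the proof of Theorem~\ref{theorem-quotientspace} for the hereditary saturated set $\{A:[A]=[\emptyset]\}$, so your route is the one the paper implicitly relies on; your version makes the lemma self-contained where the published proof leaves its key step unjustified. (Your intermediate paragraph on the quotient labelled space is a harmless detour that you correctly flag as not sufficient by itself.)
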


\begin{proof}
Clearly the map is inclusion preserving.
For injectivity, we show that
the composition of $H\mapsto I_{H}$ and
$I \mapsto  H_{I}$ is the identity on
the set of hereditary saturated subsets of $\CB$,
that is, we show that $H_{I_H}=H$.
From the easy fact that $I_{H_J}\subset J$ holds for
any ideal $J$, we see with $J=I_H$ that  $I_{H_{I_H}}\subset I_H$,
which then shows $H_{I_H}\subset H$.
Since $H\subset H_{I_H}$ is rather obvious, we have $H_{I_H}=H$.
\end{proof}

\vskip 1pc

\begin{thm}\label{theorem-main}
Let $I$ be a nonzero gauge-invariant ideal of $C^*(E,\CL,\CB)$.
Then there exists an isomorphism of $C^*(E,\CL,\QCB_I)$ onto
the quotient algebra $C^*(E,\CL,\CB)/I$ and
$I=I_H$, where $H$ is the hereditary saturated subset
consisting of $A\in \CB$ with $p_A\in I$.
Moreover
the map $H\mapsto I_H$ gives an inclusion preserving
bijection between the nonempty hereditary saturated subsets of  $\CB$
and the nonzero gauge-invariant ideals of $C^*(E,\CL,\CB)$.
\end{thm}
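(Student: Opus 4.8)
The plan is to assemble Theorem~\ref{theorem-main} from the pieces already established, so that the only real work is bookkeeping. First I would fix a nonzero gauge-invariant ideal $I$ of $C^*(E,\CL,\CB)$ and let $H=H_I=\{A\in\CB: p_A\in I\}$, which by Lemma~\ref{lemma-HI} is a nonempty hereditary saturated subset of $\CB$. The equivalence relation $\sim_I$ of Proposition~\ref{prop-quotientspace} then gives a weakly left-resolving quotient labelled space $(E,\CL,\QCB_I)$, and I would observe that the ideal of $C^*(E,\CL,\CB)$ generated by the projections $p_A$ with $[A]_I=[\emptyset]$ (equivalently $A\in H_I$) is exactly $I_H$, and that $I_H=I$: indeed $I_H\subset I$ since each generator $p_A$ ($A\in H$) lies in $I$, while $I\subset I_H$ follows because $H_{I_H}=H=H_I$ by Lemma~\ref{lemma-injection}, so $I$ and $I_H$ are gauge-invariant ideals with the same associated hereditary saturated set, and the composition argument of Lemma~\ref{lemma-injection} forces them to coincide (or, more directly, apply Corollary~\ref{cor-isomorphism} to see $I$ is determined).

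Next I would invoke Corollary~\ref{cor-isomorphism}: applied to the quotient labelled space $(E,\CL,\QCB_I)$, it produces a surjective isomorphism $\phi:C^*(E,\CL,\QCB_I)\to C^*(E,\CL,\CB)/I'$, where $I'$ is the ideal generated by the projections $q_{[A]}$ with $[A]=[\emptyset]$; but under the identification of generators this $I'$ is precisely $I_H=I$. Hence $C^*(E,\CL,\QCB_I)\cong C^*(E,\CL,\CB)/I$, which is the first assertion. This step is essentially immediate once one checks that ``the ideal generated by $p_A$, $[A]_I=[\emptyset]$'' is the same as $I_H$, which is transparent from the definition of $\sim_I$ and of $H_I$.

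For the final bijection statement, Lemma~\ref{lemma-injection} already gives that $H\mapsto I_H$ is an inclusion-preserving injection from nonempty hereditary saturated subsets into nonzero gauge-invariant ideals; it remains to prove surjectivity. Given a nonzero gauge-invariant ideal $I$, set $H=H_I$, which is nonempty hereditary saturated by Lemma~\ref{lemma-HI}, and the identity $I=I_H$ established above shows $I$ is in the range of the map. Inclusion-preservation in both directions is routine (if $H_1\subset H_2$ then $I_{H_1}\subset I_{H_2}$ trivially, and conversely $I_{H_1}\subset I_{H_2}$ gives $H_1=H_{I_{H_1}}\subset H_{I_{H_2}}=H_2$ using $H_{I_H}=H$), so the map is an order isomorphism.

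The main obstacle is the verification that $I=I_{H_I}$ for every gauge-invariant ideal $I$ — equivalently, that a gauge-invariant ideal is recovered from its set of projections. I expect this to hinge on the gauge-invariant uniqueness theorem: the inclusion $I_{H_I}\subset I$ is formal, and for the reverse one applies the gauge-invariant uniqueness theorem (Theorem~\ref{thm-uniqueness}, via Corollary~\ref{cor-isomorphism}) to the quotient $C^*(E,\CL,\CB)/I_{H_I}$, whose canonical generators $p_A+I_{H_I}$ are nonzero exactly when $A\notin\overline{H_I}=H_I$, to conclude that the induced surjection $C^*(E,\CL,\CB)/I_{H_I}\to C^*(E,\CL,\CB)/I$ is injective, forcing $I=I_{H_I}$. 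Everything else is an assembly of Lemmas~\ref{lemma-HI}, \ref{lemma-IH}, \ref{lemma-injection}, Proposition~\ref{prop-quotientspace}, and Corollaries~\ref{cor-isomorphism}.
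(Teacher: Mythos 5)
Your proposal is correct and takes essentially the same route as the paper: the paper also obtains the two isomorphisms $C^*(E,\CL,\QCB_I)\cong C^*(E,\CL,\CB)/I$ (from Theorem~\ref{theorem-quotientalgebra} plus the gauge-invariant uniqueness theorem) and $C^*(E,\CL,\QCB_I)\cong C^*(E,\CL,\CB)/I_H$ (from Corollary~\ref{cor-isomorphism}), composes them to see that the canonical surjection $C^*(E,\CL,\CB)/I_H\to C^*(E,\CL,\CB)/I$ is injective, concludes $I=I_H$, and finishes with Lemma~\ref{lemma-injection}. The only blemish is your first paragraph's suggestion that Lemma~\ref{lemma-injection} alone forces $I\subset I_H$ (it only gives $H_{I_H}=H$, i.e.\ injectivity of $H\mapsto I_H$, not of $I\mapsto H_I$), but you flag this yourself and your final paragraph supplies the correct gauge-invariant-uniqueness argument, which is exactly the paper's.
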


\begin{proof}
Let $I$ be a nonzero gauge-invariant ideal of $C^*(E,\CL,\CB)=C^*(s_a,p_v)$
and $\sim_I$ the equivalence relation on $\CB$ defined by
$$A\sim_I B \Longleftrightarrow A\cup W =B\cup W \ \text{ for
some }\ W\in H,$$
where $H:=H_I=\{A\in \CB: p_A\in I\}$ is
the hereditary saturated subset of $\CB$ (Lemma~\ref{lemma-HI}).
Then $(E,\CL,\QCB_I)$
is a weakly left-resolving quotient labelled space
by Proposition~\ref{prop-quotientspace} and
we see from the proof of Theorem~\ref{theorem-quotientalgebra}
that  there exists a surjective homomorphism
$$\psi : C^*(E,\CL,\QCB_I) \to C^*(E,\CL,\CB)/I$$
such that
$\psi(t_b)=s_b+I$, $\psi(q_{[A]})=p_A + I$
for $b\in \CA_{I}$, $[A]\in\QCB_I$.
Moreover $p_A + I\neq I$ and $ s_b+I\neq I$.
By applying the gauge-invariant uniqueness theorem (Theorem~\ref{thm-uniqueness}),
we see that $\psi$ is an isomorphism, which proves the first assertion.
 On the other hand, the ideal $I_H(\subset I)$ of $C^*(E,\CL,\CB)$
 generated by the projections $p_A\in I$ is gauge-invariant and
 $\CA_I=\CA_{I_H}$ since
$$[A]=[\emptyset] \Longleftrightarrow p_A\in I_H
\Longleftrightarrow p_A\in I.$$
By Corollary~\ref{cor-isomorphism}
with $\sim_I$ in place of $\sim_\textsc{R}$,
we have a surjective isomorphism
$$\phi : C^*(E,\CL,\QCB_I) \to C^*(E,\CL,\CB)/I_H$$
such that
$\phi(q_{[A]})=p_A + I_H, \ \phi(t_b)=s_b+I_H$
for $b\in \CA_I$ and $[A]\in \QCB_I$,
where $C^*(E,\CL,\QCB_I)=C^*(t_b, q_{[A]})$.
Then the composition of $\psi$ and $\phi^{-1}$,
$$\psi \circ \phi^{-1}: C^*(E,\CL,\CB)/I_H \to C^*(E,\CL,\CB)/I$$
is an isomorphism such that
$$\psi \circ \phi^{-1}(p_A + I_H)=p_A + I,\ \psi \circ \phi^{-1}(s_a+I_H)=s_a+I,$$
which shows $I=I_H$.
Finally Lemma~\ref{lemma-injection} completes the proof.
\end{proof}

\vskip 1pc

\begin{ex}
If $(E, \CL)$ is the following labelled graph

\vskip 1pc

\hskip .5pc
\xy
/r0.38pc/:(-33,0)*+{\cdots};
(33,0)*+{\cdots\, ,};
(-30,0)*+{\bullet}="V-3";
(-20,0)*+{\bullet}="V-2";
(-10,0)*+{\bullet}="V-1";
(0,0)*+{\bullet}="V0";
(10,0)*+{\bullet}="V1";
(20,0)*+{\bullet}="V2";
(30,0)*+{\bullet}="V3";
 "V-3";"V-2"**\crv{(-30,0)&(-20,0)};
 ?>*\dir{>}\POS?(.5)*+!D{};
 "V-2";"V-1"**\crv{(-20,0)&(-10,0)};
 ?>*\dir{>}\POS?(.5)*+!D{};
 "V-1";"V0"**\crv{(-10,0)&(0,0)};
 ?>*\dir{>}\POS?(.5)*+!D{};
 "V0";"V1"**\crv{(0,0)&(10,0)};
 ?>*\dir{>}\POS?(.5)*+!D{};
 "V1";"V2"**\crv{(10,0)&(20,0)};
 ?>*\dir{>}\POS?(.5)*+!D{};
 "V2";"V3"**\crv{(20,0)&(30,0)};
 ?>*\dir{>}\POS?(.5)*+!D{};
 "V-2";"V-3"**\crv{(-20,0)&(-25,-6)&(-30,0)};
 ?>*\dir{>}\POS?(.5)*+!D{};
 "V-1";"V-2"**\crv{(-10,0)&(-15,-6)&(-20,0)};
 ?>*\dir{>}\POS?(.5)*+!D{};
 "V0";"V-1"**\crv{(0,0)&(-5,-6)&(-10,0)};
 ?>*\dir{>}\POS?(.5)*+!D{};
 "V1";"V0"**\crv{(10,0)&(5,-6)&(0,0)};
 ?>*\dir{>}\POS?(.5)*+!D{};
 "V2";"V1"**\crv{(20,0)&(15,-6)&(10,0)};
 ?>*\dir{>}\POS?(.5)*+!D{};
 "V3";"V2"**\crv{(30,0)&(25,-6)&(20,0)};
 ?>*\dir{>}\POS?(.5)*+!D{};
 "V0";"V0"**\crv{(0,0)&(-4,4)&(0,8)&(4,4)&(0,0)};
 ?>*\dir{>}\POS?(.5)*+!D{};
 (-25,2)*+{b};(-15,2)*+{b};(-5,2)*+{b};(5,2)*+{b};(15,2)*+{b};(25,2)*+{b};
 (-25,-4)*+{c};(-15,-4)*+{c};(-5,-4)*+{c};(5,-4)*+{c};(15,-4)*+{c};(25,-4)*+{c};
 (0,8)*+{a};(0.1,-3)*+{v_0};(10.1,-3)*+{v_1};
 (-9.9,-3)*+{v_{-1}};
 (-19.9,-3)*+{v_{-2}};
 (20.1,-3)*+{v_{2}};
\endxy

\vskip 1pc
\noindent
then $C^*(E,\CL,\CEa)\cong C^*(E,\CL,\bE)$ by Corollary~\ref{cor-isomorphism}
while
\begin{align*}
& \CEa = \{E^0\}\cup \{A\subset E^0\,:\, A \ {\rm is\  finite }\},\\
 & \bE  =\CEa\cup\{ A \subset E^{0} \,:\,  E^0\setminus A\  {\rm  is \ finite} \}.
\end{align*}
Let $I$ be the gauge-invariant ideal of $C^*(E,\CL,\bE)$ corresponding to
the hereditary saturated set $H= \{A\subset E^0\,:\, A \ {\rm is\  finite }\}$.
Then $[\bE]_I=\{[E^0], [\emptyset]\}$ and $\CA_I=\{b,c\}$.
Let  $C^*(E,\CL,[\bE]_I)=C^*(p_{[E^0]},s_b,s_c)$.
Since $s_b^*s_b=p_{[r(b)]}=p_{[E^0]}=s_c^*s_c$,
$s_b^*s_c=0$,   $p_{[E^0]}s_b=s_b p_{r([E^0],b)}=s_bp_{[E^0]}$,
and similarly $p_{[E^0]}s_c=s_c p_{[E^0]}$,
$C^*(E,\CL,[\bE]_I)$ is the universal $C^*$-algebra
generated by two isometries with orthogonal ranges with the unit $p_{[E^0]}$.
Therefore $C^*(E,\CL,[\bE]_I)$ is isomorphic to the Cuntz algebra $\CO_2$
and by Theorem~\ref{theorem-main}, we have $C^*(E,\CL, \bE)/I \cong \CO_2$.
(For the ideal $I$, see \cite[Remark 3.7]{JK}.)
\end{ex}

\vskip 1pc

\section{$C^*$-algebras of merged labelled graphs}

In this section we show that
given a labelled space $(E,\CL_E,\bE)$ such that for every $v\in E^0$,
 $[v]_l$ is finite for some $l\geq 1$ (this obviously holds if $E^0$ is finite),
there exists a labelled space  $(F,\CL_F,\bF)$
such that $\{v\} \in \bF$ for all $v \in F^{0}$ and
$C^{*}(E,\CL_E,\bE) \cong C^{*}(F,\CL_F,\bF)$.

\vskip 1pc

\begin{ex}\label{ex_mergedgraph1}
Consider the following  labelled graphs:
\vskip 1pc
\hskip .5pc
\xy
/r0.38pc/:
 (0,0)*+{\bullet}="V3";
 (14,0)*+{\bullet}="V4";
 "V3";"V3"**\crv{(0,0)&(-4,4)&(-8,0)&(-4,-4)&(0,0)};?>*\dir{>}\POS?(.5)*+!D{};
 "V4";"V4"**\crv{(14,0)&(18,4)&(22,0)&(18,-4)&(14,0)};?>*\dir{>}\POS?(.5)*+!D{};
 "V3";"V4"**\crv{(0,0)&(8,4)&(14,0)};?>*\dir{>}\POS?(.5)*+!D{};
 "V4";"V3"**\crv{(14,0)&(6,-4)&(0,0)};?>*\dir{>}\POS?(.5)*+!D{};
 (-4,4)*+{0},(18,4)*+{1},(7,4)*+{0},(7,-4)*+{1},
 (0,-2)*+{v_1},(14,-2)*+{v_2},
 (-16,0)*+{(E_1,\CL_1)},

\endxy
\vskip .5pc
\hskip .5pc
\xy
/r0.38pc/:
(0,0)*+{\bullet}="V1";
 (14,0)*+{\bullet}="V2";
 "V1";"V1"**\crv{(0,0)&(-4,4)&(-8,0)&(-4,-4)&(0,0)};?>*\dir{>}\POS?(.5)*+!D{};
 "V2";"V2"**\crv{(14,0)&(18,4)&(22,0)&(18,-4)&(14,0)};?>*\dir{>}\POS?(.5)*+!D{};
 "V1";"V2"**\crv{(0,0)&(8,4)&(14,0)};?>*\dir{>}\POS?(.5)*+!D{};
 "V2";"V1"**\crv{(14,0)&(6,-4)&(0,0)};?>*\dir{>}\POS?(.5)*+!D{};
 (-4,4)*+{0},(18,4)*+{0},(7,4)*+{1},(7,-4)*+{1},
 (0,-2)*+{v_1},(14,-2)*+{v_2},
 (-16,0)*+{(E_2,\CL_2)}
\endxy
\vskip .5pc
\hskip .5pc
\xy
/r0.38pc/:
 (7,0)*+{\bullet}="V0";
 "V0";"V0"**\crv{(7,0)&(11,4)&(15,0)&(11,-4)&(7,0)};?>*\dir{>}\POS?(.5)*+!D{};
 "V0";"V0"**\crv{(7,0)&(3,4)&(-1,0)&(3,-4)&(7,0)};?>*\dir{>}\POS?(.5)*+!D{};
 (3,4)*+{0},(11,4)*+{1},
 (7,-2)*+{v},
 (-9,0)*+{(F,\CL_F)},
\endxy

\vskip 1pc
\noindent  Note that $\bE_i =\{\emptyset,\,\{v_1,\, v_2\}\}$ and
 $\{v_j\}\notin \bE_j$  for $i,j=1,2$
while  $\{v\}\in \bF$ for $v\in F^0$.
The $C^*$-algebras $C^*(E_i,\CL_i,\bE_i)$, $i=1,2$, and
$C^*(F,\CL_F,\bF)$  are all isomorphic to
 the Cuntz algebra $\mathcal{O}_2$.
\end{ex}

\vskip 1pc

\begin{ex}\label{ex_mergedgraph2}
The $C^*$-algebras $C^*(E, \CL_E, \bE)$ and $C^*(F, \CL_F, \bF)$
associated to the following labelled graphs
are isomorphic.
\vskip 1pc
\hskip .5pc
\xy
/r0.38pc/:
 (0,0)*+{\bullet}="V1";
 (0,8)*+{\bullet}="V2";
 (10,4)*+{\bullet}="V3";
 (20,4)*+{\bullet}="V4";
 (30,4)*+{\bullet}="V5";
 "V1";"V1"**\crv{(0,0)&(-4,4)&(-8,0)&(-4,-4)&(0,0)};?>*\dir{>}\POS?(.5)*+!D{};
 "V2";"V2"**\crv{(0,8)&(-4,12)&(-8,8)&(-4,4)&(0,8)};?>*\dir{>}\POS?(.5)*+!D{};
 "V1";"V3"**\crv{(0,0)&(10,4)};?>*\dir{>}\POS?(.5)*+!D{};
 "V2";"V3"**\crv{(0,8)&(10,4)};?>*\dir{>}\POS?(.5)*+!D{};
 "V3";"V4"**\crv{(10,4)&(20,4)};?>*\dir{>}\POS?(.5)*+!D{};
 "V4";"V5"**\crv{(20,4)&(30,4)};?>*\dir{>}\POS?(.5)*+!D{};
 (-14,4)*+{(E,\CL_E)},
 (0,-2)*+{w_0}, (0,6)*+{u_0},(10,2)*+{v_1}, (20,2)*+{v_2}, (30,2)*+{v_3},
 (-4,-4)*+{0},(-4,12)*+{0},(5,8)*+{1},(5,1)*+{2},(15,5)*+{3},(25,5)*+{4},(35,4)*+{\cdots},
 \endxy
\vskip .5pc
\hskip .5pc
 \xy
 /r0.38pc/:
 (0,4)*+{\bullet}="V6";
 (10,4)*+{\bullet}="V7";
 (20,4)*+{\bullet}="V8";
 (30,4)*+{\bullet}="V9";
 "V6";"V6"**\crv{(0,4)&(-4,8)&(-8,4)&(-4,0)&(0,4)};?>*\dir{>}\POS?(.5)*+!D{};
 "V6";"V7"**\crv{(0,4)&(5,0)&(10,4)};?>*\dir{>}\POS?(.5)*+!D{};
 "V6";"V7"**\crv{(0,4)&(5,8)&(10,4)};?>*\dir{>}\POS?(.5)*+!D{};
 "V7";"V8"**\crv{(10,4)&(20,4)};?>*\dir{>}\POS?(.5)*+!D{};
 "V8";"V9"**\crv{(20,4)&(30,4)};?>*\dir{>}\POS?(.5)*+!D{};
 (-4,0)*+{0},(5,7)*+{1},(5,1)*+{2},(15,5)*+{3},(25,5)*+{4},(35,4)*+{\cdots},
 (-14,4)*+{(F,\CL_F)},
 (0,2)*+{v_0}, (10,2)*+{v_1}, (20,2)*+{v_2}, (30,2)*+{v_3}
 \endxy
\vskip 1pc
\noindent
 Note here that $\{v_i\}\in \bF$ for all $i=0, 1,2,\dots$,
 but $\{u_0\},\,\{w_0\}\notin \bE$.
\end{ex}

\vskip 1pc

\begin{dfn}\label{def-mergedgraph}
Let $(E,\CL_E,\bE)$ be a labelled space.
Then $v \sim w$ if and only if $[v]_l = [w]_l$ for all $l \geq 1$
defines an equivalence relation $\sim$ on $E^{0}$.
Let $[v]_\infty$ denote the equivalence class $\{ w\in E^0 \,:\, w \sim v \}$ of $v$
and let
$$F^{0} := E^{0}/\sim \, = \{[v]_\infty \,:\, v \in E^{0}\}.$$
If $\ld\in E^{1}$ is an edge such that $s(\ld) \in [v]_\infty$, $r(\ld) \in [w]_\infty$,
then draw an edge $e_{\ld}$ from $[v]_\infty$ to $[w]_\infty$
and label $e_{\ld}$ with $\CL_F(e_{\ld}): = \CL_E(\ld)$.
If $\ld_1,\, \ld_2  \in E^{1}$ are edges with
$s(\ld_i) \in [v]_\infty$, $r(\ld_i) \in [w]_\infty$, $i=1,2$,
and $\CL_E(\ld_1)=\CL_E(\ld_2)$,
we identify  $e_{\ld_1}$ with $e_{\ld_2}$.
Then $F=(F^0, F^{1} := \{ e_{\ld} \,:\, \ld \in E^{1} \})$ is a graph
with the range, source maps given by $r(e_{\ld}):=[r(\ld)]_\infty$,
$s(e_{\ld}):=[s(\ld)]_\infty$, respectively,
and  $(F, \CL_F)$ is called the {\it merged labelled graph} of $(E,\CL_E)$
(cf. \cite{LM}).
\end{dfn}

\vskip 1pc

  In Example~\ref{ex_mergedgraph1},
$(F,\CL_F)$ is the merged labelled graph of  $(E_i,\CL_i)$
with $v=[v_1]_\infty=[v_2]_\infty$,
and similarly in Example~\ref{ex_mergedgraph2},
$(F,\CL_F)$ is a merged graph of $(E,\CL_E)$
with $v_0=[u_0]_\infty=[w_0]_\infty$.
  Note that a path $e_{\ld} \in F^{\geq 1}$ does not arise
  from a path $\ld \in E^{\geq 1}$ in general as
  we can observe in Example~\ref{ex_mergedgraph3} below;
$e_{\ld_1\ld_4}=e_{\ld_1}e_{\ld_4} \in F^{2}$, but $\ld_1 \ld_4 \notin E^{2}$.

\vskip 1pc

\begin{remark}\label{remark-vertex}
Let $(E,\CL_E,\CB)$ be a labelled space such that
for each $v\in E^0$ there exists an $l\geq 1$
for which $[v]_l$ is finite. We have the following:
\begin{enumerate}
\item[(i)] $[v]_\infty \in \bE$ for all $v\in E^0$;
 if $[v]_l$ is a finite set, there exists an $l'\geq l$ such that
$[v]_l \supset [v]_{l+1} \supset \cdots \supset \ [v]_{l'}=[v]_{l'+1} =\cdots,$
hence $[v]_\infty=[v]_{l'} \in \bE$.
\item[(ii)] For $u,v\in E^0$, either $[u]_\infty=[v]_\infty$ or
$[u]_\infty \cap [v]_\infty=\emptyset$.
\end{enumerate}
\end{remark}

\vskip 1pc

\begin{ex}\label{ex_mergedgraph3}
Consider the  labelled graph $(E, \CL:=\CL_E)$.

\vskip 1pc
\xy
/r0.35pc/:
 (20,0)*+{\bullet}="V1";
 (28,8)*+{\bullet}="V2";
 (28,-8)*+{\bullet}="V3";
 (42,8)*+{\bullet}="V4";
 (42,-8)*+{\bullet}="V5";
 (50,0)*+{\bullet}="V6";
 "V1";"V2"**\crv{(20,0)&(28,8)};?>*\dir{>}\POS?(.5)*+!D{};
 "V1";"V3"**\crv{(20,0)&(28,-8)};?>*\dir{>}\POS?(.5)*+!D{};
 "V2";"V4"**\crv{(28,8)&(42,8)};?>*\dir{>}\POS?(.5)*+!D{};
 "V3";"V5"**\crv{(28,-8)&(42,-8)};?>*\dir{>}\POS?(.5)*+!D{};
 "V4";"V6"**\crv{(42,8)&(50,0)};?>*\dir{>}\POS?(.5)*+!D{};
 "V5";"V6"**\crv{(42,-8)&(50,0)};?>*\dir{>}\POS?(.5)*+!D{};
 (23,0)*+{u_1},(29,6)*+{v_1},(29,-6)*+{v_2},(42,6)*+{w_1},(41,-6)*+{w_2},(47,0)*+{u_2},
 (20,5)*+{{}_{\CL(\ld_1)=1}},(20,-5.5)*+{{}_{\CL(\ld_2)=1}},(35,10)*+{{}_{\CL(\ld_3)=2}},
 (35,-10)*+{{}_{\CL(\ld_4)=2}},(50,5)*+{{}_{\CL(\ld_5)=3}},(50,-5.5)*+{{}_{\CL(\ld_6)=3}},
 (0,0)*+{(E,\CL)},(16,0)*+{\cdots},(54,0)*+{\cdots},
 \endxy
\vskip .5pc

\noindent The merged labelled graph $(F,\CL_F)$ of $(E,\CL)$ is as follows.
\vskip .5pc

 \xy
 /r0.38pc/:
 (20,0)*+{\bullet}="V7";
 (35,0)*+{\bullet}="V8";
 (50,0)*+{\bullet}="V9";
 (65,0)*+{\bullet}="V10";
 "V7";"V8"**\crv{(20,0)&(35,0)};?>*\dir{>}\POS?(.5)*+!D{};
 "V8";"V9"**\crv{(35,0)&(50,0)};?>*\dir{>}\POS?(.5)*+!D{};
 "V9";"V10"**\crv{(50,0)&(65,0)};?>*\dir{>}\POS?(.5)*+!D{};
 (27,2)*+{{}_{\CL_F(e_{\ld_1})=1}},(42,2)*+{{}_{\CL_F(e_{\ld_3})=2}},
 (57,2)*+{{}_{\CL_F(e_{\ld_5})=3}},
 (5,0)*+{(F,\CL_F)},
 (20,-2.5)*+{[u_1]_\infty},(35,-2.5)*+{[v_1]_\infty},(50,-2.5)*+{[w_1]_\infty},
 (65,-2.5)*+{[u_2]_\infty},(17,0)*+{\cdots},(70,0)*+{\cdots,},
 \endxy
\vskip 1pc

\noindent where
$e_{\ld_1}=e_{\ld_2},\, e_{\ld_3}=e_{\ld_4},\, e_{\ld_5}=e_{\ld_6}$,
 $[v_i]_\infty =\{v_1,v_2\}$, and $ [w_i]_\infty =\{w_1,w_2\}$, $i=1,2$.
\end{ex}

\vskip 1pc

\begin{dfn}
Let $(F,\CL_F)$ be the merged labelled graph of $(E,\CL_E)$.
For $A \subset E^{0}$, $B \subset F^{0}$, we define
$[A]_\infty \subset F^{0}$, $ \widehat{B}\subset E^{0}$ by
$$ [A]_\infty :=\{ [v]_\infty \,:\, v \in A \}, \ \
 \widehat{B} : =\{v \,:\, [v]_\infty \in B \}.$$
\end{dfn}

\vskip 1pc

\noindent
Note that $[A_1\cap A_2]_\infty \subset [A_1]_\infty \cap [A_2]_\infty $
 and  $[A_1\cup A_2]_\infty = [A_1]_\infty \cup [A_2]_\infty $
whenever $A_1, A_2 \subset E^{0}$,
and for $A \subset E^{0}$ and $B \subset F^{0}$,
\begin{eqnarray}\label{eqn_E_0F_0}
A \subset \widehat{[A]_\infty}\ \text{ and }\  B = [\widehat{B}]_\infty.
\end{eqnarray}

\vskip 1pc

\begin{lem}\label{lem_samepaths}
 Let $(E, \CL_E, \bE)$ be a labelled space such that
for each $v\in E^0$, $[v]_l$ is finite for some $l\geq 1$,
and let $(F, \CL_F)$ be the merged labelled graph of $(E, \CL_E)$.
Then
\begin{eqnarray}\label{eqn_samepaths}
\CL_E([u]_\infty E^kv)=\CL_F([u]_\infty F^k[v]_\infty)
\end{eqnarray}
for all $k\geq 1$ and $u,v\in E^0$.
Moreover we have the following:
\begin{enumerate}
\item[(i)] $r(\af)=\widehat{r_F(\af)}$ and $[r(\af)]_\infty=r_F(\af)$
 for $\af\in  \CL^*(E)$.

\vskip .5pc
\item[(ii)] $s(\af) \subset \widehat{s_F(\af)}$ and $[s(\af)]_\infty=s_F(\af)$
 for $\af\in  \CL^*(E)$.

\vskip .5pc
\item[(iii)] $[\,[v]_l\,]_\infty=[\,[v]_\infty\,]_l$ for $v \in E^{0}$, $l\geq 1$.

\vskip .5pc
\item[(iv)] $[A\cap B]_\infty = [A]_\infty \cap [B]_\infty$
for $A, B \in \bE$.

\vskip .5pc
\item[(v)] $A=\widehat{[A]_\infty}$ for $A\in \bE$.
\end{enumerate}
\end{lem}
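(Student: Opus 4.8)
The plan is to establish the displayed identity \eqref{eqn_samepaths} first and then to deduce (i)--(v) from it together with one elementary observation: \emph{every set in $\bE$ is $\sim$-saturated}, meaning that if $A\in\bE$, $w\in A$ and $w'\sim w$, then $w'\in A$. Indeed, $v\sim w$ means $[v]_l=[w]_l$ for all $l$, so $\sim$ refines each $\sim_l$; writing $A=\bigcup_{i=1}^n[v_i]_{l_i}$ as in \eqref{eqn-bE}, if $w\in[v_j]_{l_j}$ and $w'\sim w$ then $w'\sim_{l_j}w$, hence $w'\in[w]_{l_j}=[v_j]_{l_j}\subseteq A$. I shall also use freely that $[v]_\infty\in\bE$ for every $v\in E^0$ (Remark~\ref{remark-vertex}(i)), that $r(\af)\in\bE$ for $\af\in\CL^*(E)$ since $\bE$ is accommodating, and the routine identity $r(r(A,\af),\bt)=r(A,\af\bt)$ for $A\subseteq E^0$ and $\af,\bt\in\CL^*(E)$.

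The inclusion $\CL_E([u]_\infty E^kv)\subseteq\CL_F([u]_\infty F^k[v]_\infty)$ is easy: a path $\ld=\ld_1\cdots\ld_k\in[u]_\infty E^kv$ yields the $F$-path $e_\ld:=e_{\ld_1}\cdots e_{\ld_k}$, well defined because $r(\ld_i)=s(\ld_{i+1})$ forces $[r(\ld_i)]_\infty=[s(\ld_{i+1})]_\infty$, with label $\CL_E(\ld)$, source $[s(\ld_1)]_\infty=[u]_\infty$ and range $[r(\ld_k)]_\infty=[v]_\infty$. For the reverse inclusion, note first that an $F$-path $e_{\ld_1}\cdots e_{\ld_k}$ need not come from a path of $E$, but its consecutive edges do satisfy $r(\ld_i)\sim s(\ld_{i+1})$, and if its source is $[u]_\infty$ and range $[v]_\infty$ then $s(\ld_1)\in[u]_\infty$ and $r(\ld_k)\sim v$. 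Now, given such a path realizing $\af=a_1\cdots a_k$, put $A_0:=[u]_\infty\in\bE$ and $A_i:=r(A_{i-1},a_i)=r([u]_\infty,a_1\cdots a_i)$, which lies in $\bE$ by closure under relative ranges. One proves $r(\ld_i)\in A_i$ for $1\le i\le k$ by induction: $s(\ld_1)\in A_0$ gives $r(\ld_1)\in r(A_0,a_1)=A_1$; and if $r(\ld_i)\in A_i$, then $\sim$-saturation of $A_i$ together with $r(\ld_i)\sim s(\ld_{i+1})$ gives $s(\ld_{i+1})\in A_i$, whence $r(\ld_{i+1})\in r(A_i,a_{i+1})=A_{i+1}$. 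Finally $r(\ld_k)\in A_k$ and $r(\ld_k)\sim v$, so $\sim$-saturation of $A_k$ gives $v\in A_k=r([u]_\infty,\af)$; unwinding the definition of relative range this furnishes $\mu\in E^k$ with $\CL_E(\mu)=\af$, $s(\mu)\in[u]_\infty$ and $r(\mu)=v$, i.e.\ $\af\in\CL_E([u]_\infty E^kv)$. This proves \eqref{eqn_samepaths}.

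For the remaining items I argue as follows. Since $\CL_E(E^ku)=\bigcup_{[w]_\infty\in F^0}\CL_E([w]_\infty E^ku)$ and $\CL_F(F^k[u]_\infty)=\bigcup_{[w]_\infty\in F^0}\CL_F([w]_\infty F^k[u]_\infty)$, identity \eqref{eqn_samepaths} gives $\CL_E(E^ku)=\CL_F(F^k[u]_\infty)$ and hence $\CL_E(E^{\le l}u)=\CL_F(F^{\le l}[u]_\infty)$ for all $u\in E^0$ and $l\ge1$; therefore $u\sim_lv$ in $E$ exactly when $[u]_\infty\sim_l[v]_\infty$ in $F$, which is precisely (iii). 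For (i), $[r(\af)]_\infty\subseteq r_F(\af)$ follows from $\ld\mapsto e_\ld$; conversely if $[w]_\infty\in r_F(\af)$, a witnessing $F$-path has some source $[u]_\infty$, so $\af\in\CL_F([u]_\infty F^k[w]_\infty)=\CL_E([u]_\infty E^kw)$ by \eqref{eqn_samepaths}, giving $w\in r([u]_\infty,\af)\subseteq r(\af)$; hence $[r(\af)]_\infty=r_F(\af)$, and then $\widehat{r_F(\af)}=\widehat{[r(\af)]_\infty}=r(\af)$ because $r(\af)\in\bE$ is $\sim$-saturated. Item (ii) is analogous: $s(\af)\subseteq\widehat{s_F(\af)}$ and $[s(\af)]_\infty\subseteq s_F(\af)$ come from $\ld\mapsto e_\ld$, and $s_F(\af)\subseteq[s(\af)]_\infty$ from \eqref{eqn_samepaths}. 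For (iv), the obvious inclusion $[A\cap B]_\infty\subseteq[A]_\infty\cap[B]_\infty$ together with $\sim$-saturation does it: if $[u]_\infty=[v]_\infty$ with $u\in A$ and $v\in B$, then $u\sim v\in B\in\bE$ forces $u\in B$, so $[u]_\infty\in[A\cap B]_\infty$. Item (v) follows from \eqref{eqn_E_0F_0} and the remark that $v\in\widehat{[A]_\infty}$ means $v\sim u$ for some $u\in A\in\bE$, whence $v\in A$.

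The main obstacle is the reverse inclusion in \eqref{eqn_samepaths}: a path of the merged graph $F$ genuinely need not lift to a path of $E$, and what makes the inductive construction succeed is precisely that each relative-range set $A_i$ stays inside $\bE$ and is therefore $\sim$-saturated, so at every step the ``wrong'' endpoint $r(\ld_i)$ may be replaced by the required source vertex $s(\ld_{i+1})$ without leaving $A_i$.
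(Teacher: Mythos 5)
Your proof is correct, and the derivations of (i)--(v) from \eqref{eqn_samepaths} essentially coincide with the paper's; the genuine difference is in how you obtain the hard inclusion $\CL_F([u]_\infty F^k[v]_\infty)\subseteq\CL_E([u]_\infty E^kv)$. The paper lifts an $F$-path edge by edge: for $k=1$ it picks some $\ld'$ with $r(\ld')=v$ and the right label, and then invokes weak left-resolving directly (disjoint source classes would force $[v]_\infty$ into $r(A,\af)\cap r(B,\af)=r(A\cap B,\af)=\emptyset$) to conclude $s(\ld')\in[u]_\infty$; higher $k$ is handled by chaining these $k=1$ replacements backward from $v$. You instead isolate the observation that every set in $\bE$ is $\sim$-saturated — extracted from the description \eqref{eqn-bE} of $\bE$ as finite unions of generalized vertices — and run a single forward induction on the relative ranges $A_i=r([u]_\infty,a_1\cdots a_i)\in\bE$, using saturation to pass from $r(\ld_i)$ to $s(\ld_{i+1})$ at each step and finally from $r(\ld_k)$ to $v$. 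This does not avoid the weak left-resolving hypothesis (it is hidden in \eqref{eqn-bE}), but it buys a cleaner uniform induction, makes the mechanism behind the lifting transparent, and lets the same saturation fact dispatch (iv) and (v) in one line each, where the paper argues ad hoc with finiteness of $[v]_l$. One small point worth making explicit in your write-up: at each stage $A_i\neq\emptyset$ (it contains $r(\ld_i)$), so $a_1\cdots a_i\in\CL^*(E)$ and closure of $\bE$ under relative ranges indeed applies.
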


\begin{proof}
For simplicity of notation, we write  $\CL$
 for $\CL_E$ omitting the subscript $E$.
Note that each $[u]_\infty\in F^0$ is also  a subset of $E^0$ so that
an expression like $[u]_\infty E^k$ has obvious meaning.
 Since $\CL([u]_\infty E^kv)\subset \CL_F([u]_\infty F^k[v]_\infty )$ is clear,
we only need to show the reverse inclusion for (\ref{eqn_samepaths})
when $k\geq 1$.

Let $k=1$.
If $e_{\ld}\in  [u]_\infty F^1[v]_\infty$
and $\CL(e_{\ld}) = \af$,
 $\ld \in E^1$ is an edge such that  $s(\ld) \in [u]_\infty$,
 $r(\ld) \in [v]_\infty $ and $\CL(\ld) = \af$.
 Since $[v]_\infty = [r(\ld)]_\infty$, there exists an edge $\ld' \in E^1$ with
 $r(\ld') = v$ and $\CL(\ld')=\af$.
We claim that $[s(\ld')]_\infty=[s(\ld)]_\infty$.
 Since $[s(\ld)]_\infty \in\bE$ by Remark~\ref{remark-vertex}.(i),
 $r(\ld)\in r([s(\ld)]_\infty, \af)\in\bE$
 hence $[r(\ld)]_\infty\subset r([s(\ld)]_\infty, \af)$.
 Similarly, $v=r(\ld')\in r([s(\ld')]_\infty,\af)\in \bE$
 implies that $[v]_\infty\subset r([s(\ld')]_\infty,\af)$.
 Suppose $[s(\ld)]_\infty\neq [s(\ld')]_\infty$.
 Then $[s(\ld)]_\infty \cap [s(\ld')]_\infty=\emptyset$ by Remark~\ref{remark-vertex}.(ii),
 since $(E,\CL,\bE)$ is weakly left-resolving,
 $[v]_\infty=[r(\ld)]_\infty \subset r\big([s(\ld)]_\infty,\af)\cap r([s(\ld')]_\infty,\af\big)
 =r\big([s(\ld)]_\infty\cap [s(\ld')]_\infty,\af\big)=\emptyset,$
 a contradiction.
 Thus  $[s(\ld)]_\infty= [s(\ld')]_\infty$,
 namely $s(\ld')\in [u]_\infty$, and we have
\begin{eqnarray}\label{eqn_k=1}
\CL([u]_\infty E^1v)= \CL_F([u]_\infty F^1[v]_\infty).
\end{eqnarray}
 Now let $k=2$ and $e_{\ld_1} e_{\ld_2}\in F^2$ be a path with
$[u]_\infty := s_F(e_{\ld_1})$,
$[w]_\infty :=r_F(e_{\ld_1})=s_F(e_{\ld_2})$,
$[v]_\infty := r_F(e_{\ld_2})$, and
$\CL_F(e_{\ld_1} e_{\ld_2})=\af_1 \af_2$.
Then by (\ref{eqn_k=1}), there exist  $\ld_1',\, \ld_2'\in E^1$
such that
\begin{align*} & s(\ld_2')\in [w]_\infty, \ r(\ld_2')=v,\   \CL(\ld_2')=\af_2, \\
&  s(\ld_1')\in [u]_\infty, \ r(\ld_1')=s(\ld_2'),\   \CL(\ld_1')=\af_1.
\end{align*}
Then $\ld=\ld_1' \ld_2'\in [u]_\infty E^2v$ is a path with
$e_{\ld_1'}e_{\ld_2'}=e_{\ld_1}e_{\ld_2}$
and $\CL(\ld)=\CL_F(e_{\ld_1}e_{\ld_2})=\af_1 \af_2$.
Thus $\CL([u]_\infty E^2v)= \CL_F([u]_\infty F^2[v]_\infty)$.
For $k\geq 3$, one can repeat the process inductively.
Moreover (\ref{eqn_samepaths}) implies that
\begin{eqnarray}\label{eqn-CECF}
\CL(E^kv)= \CL_F(F^k[v]_\infty), \ k\geq 1.
\end{eqnarray}

  (i) To show   $r(\af)=\widehat{r_F(\af)}$ for $\af \in \CL^{*}(E)$,
let $v \in r(\af)$.
Then there exists $\ld \in E^{\geq 1}$
such that $r(\ld)=v$ and $\CL(\ld)=\af$.
The edge $e_\ld \in F^{\geq 1}$ has the range vertex $r(e_\ld)=[v]_\infty$
and the label $\CL_F(e_\ld)=\af$.
Hence $[v]_\infty  \in r_F(\af)$, namely $v \in \widehat{r_F(\af)}$.
Conversely, if $v \in \widehat{r_F(\af)}$,
that is, $[v]_\infty \in r_F(\af)$,
by (\ref{eqn-CECF}), there is a path $\ld \in E^{\geq 1}$ with $\CL(\ld)=\af$, $r(\ld)=v$.
Hence $v \in r(\af)$.
Also, by (\ref{eqn_E_0F_0}) we have
$[r(\af)]_\infty =[\,\widehat{r_F(\af)}\,]_\infty =r_F(\af)$.

  (ii) Since $s(\af) \subset \widehat{s_F(\af)}$ is clear,
we have $[s(\af)]_\infty  \subset [\,\widehat{s_F(\af)}\,]_\infty =s_F(\af)$
by (\ref{eqn_E_0F_0}).
Also (\ref{eqn_samepaths}) shows that  $[s(\af)]_\infty  \supset s_F(\af)$.

  (iii) The equality
$[\, [v]_l\,]_\infty =[\,[v]_\infty\, ]_l$
follows  from
\begin{align*}
w\in [v]_l & \Longleftrightarrow [w]_l = [v]_l\\
& \Longleftrightarrow \CL(E^{\leq l} w)=\CL(E^{\leq l} w)\\
& \Longleftrightarrow \CL_F(F^{\leq l} [w]_\infty)=\CL_F(F^{\leq l} [v]_\infty)
\ \,(\text{by (\ref{eqn-CECF})})\\
& \Longleftrightarrow
[w]_\infty \in [\,[v]_\infty\,]_l.
\end{align*}

  (iv) It suffices to show that
  $[A]_\infty  \cap [B]_\infty  \subset [A \cap B]_\infty $.
Let $[v_1]_\infty =[v_2]_\infty  \in [A]_\infty  \cap [B]_\infty $
for some $v_1 \in A$, $v_{2} \in B$.
Since $A, B \in \bE$, there exists $l\geq 1$
such that $[v_{1}]_\infty  \subset [v_{1}]_l \subset A$
and $[v_{2}]_\infty  \subset [v_{2}]_l \subset B$.
Hence $[v_{1}]_l=[v_{2}]_l$ and so
 $v_1 \in A \cap B$ and  $[v_1]_\infty  \in [A \cap B]_\infty $.

 (v) $A\subset \widehat{[A]_\infty }$ is clear.
 If $v\in \widehat{[A]_\infty }$,
 $[v]_\infty \in [A]_\infty $  and so $[v]_\infty=[w]_\infty$ for some $w\in A$.
 Writing $A=\cup_j [w_j]_l\in \bE$, we have $w\in [w_j]_l$ for some $j$,
 then $v\in [w_j]_l\subset A$ because $v\sim w\sim_l w_j$.
\end{proof}

\vskip 1pc

If for each $v\in E^0$, $[v]_l$ is finite for some $l\geq 1$,
then $[v]_k=[v]_\infty$ for some $k\geq l$, and
so every $A\in \bE$ can be written as $A=\cup_j [v_j]_l$
with $[v_j]_l=[v_j]_\infty$ and thus by Lemma~\ref{lem_samepaths}.(iii),
we have
$$[A]_\infty =[\,\cup_j [v_j]_l\,]_\infty
=\cup_j [\,[v_j]_l\,]_\infty=\cup_j [\,[v_j]_\infty\,]_l \in \bF. $$

\vskip 1pc

\begin{prop}\label{prop_bijection}
Let $(E,\CL,\bE)$ be a labelled space such that if $v\in E^0$,
$[v]_l$ is finite for some $l\geq 1$.
Then the map $A\mapsto [A]_\infty:  \bE \rightarrow \bF$ is a bijection such that
$[r(A,\af)]_\infty = r_F([A]_\infty,\af)$ for $A\in \bE$, $\af\in \CL^*(E)$.
\end{prop}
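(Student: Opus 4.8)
The plan is to establish the three assertions of the proposition in turn: surjectivity of $A \mapsto [A]_\infty$ onto $\bF$, injectivity, and the intertwining relation with relative ranges. Surjectivity is essentially already done in the paragraph preceding the statement: every $B \in \bF$ has the form $B = \cup_i [\,[v_i]_\infty\,]_{l_i}$ by (\ref{eqn-bE}) applied to $(F,\CL_F)$, and by Lemma~\ref{lem_samepaths}.(iii) each $[\,[v_i]_\infty\,]_{l_i} = [\,[v_i]_{l_i}\,]_\infty$; choosing $l_i$ large enough that $[v_i]_{l_i} = [v_i]_\infty$ (possible by Remark~\ref{remark-vertex}.(i)) and using that $[\,\cdot\,]_\infty$ preserves finite unions, we get $B = [A]_\infty$ for $A = \cup_i [v_i]_{l_i} \in \bE$. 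So the surjectivity step I would dispatch in one or two sentences referring back to that discussion.

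For injectivity, suppose $[A]_\infty = [B]_\infty$ with $A, B \in \bE$. The clean way is to invoke Lemma~\ref{lem_samepaths}.(v), which gives $A = \widehat{[A]_\infty}$ and $B = \widehat{[B]_\infty}$; since $[A]_\infty = [B]_\infty$ forces $\widehat{[A]_\infty} = \widehat{[B]_\infty}$, we conclude $A = B$. This is short, but it is worth double-checking that the hypothesis of Lemma~\ref{lem_samepaths}.(v) — namely that each $[v]_l$ is eventually finite — is exactly the standing assumption of the present proposition, which it is.

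The substantive content is the relation $[r(A,\af)]_\infty = r_F([A]_\infty, \af)$. I would prove this by a double inclusion, reducing to the vertex-level identity (\ref{eqn_samepaths}) of Lemma~\ref{lem_samepaths}. For "$\subseteq$": if $[v]_\infty \in [r(A,\af)]_\infty$ with $v \in r(A,\af)$, there is $\ld \in E^{\ge 1}$ with $\CL(\ld) = \af$, $r(\ld) = v$, $s(\ld) \in A$; then $e_\ld \in F^{\ge 1}$ has $\CL_F(e_\ld) = \af$, $r_F(e_\ld) = [v]_\infty$, $s_F(e_\ld) = [s(\ld)]_\infty \in [A]_\infty$, so $[v]_\infty \in r_F([A]_\infty, \af)$. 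For "$\supseteq$": if $[v]_\infty \in r_F([A]_\infty, \af)$, pick a path in $F$ realizing this, say with source in some $[u]_\infty$, $u \in A$; apply (\ref{eqn_samepaths}) (in the form $\CL_E([u]_\infty E^{|\af|} v) = \CL_F([u]_\infty F^{|\af|} [v]_\infty)$) to produce $\ld \in [u]_\infty E^{\ge 1} v$ with $\CL(\ld) = \af$, so $r(\ld) = v$ and $s(\ld) \in [u]_\infty \subseteq A$ — here I would use Lemma~\ref{lem_samepaths}.(v) (or Remark~\ref{remark-vertex}.(i)) to know $[u]_\infty \subseteq A$ because $u \in A \in \bE$ — hence $v \in r(A,\af)$ and $[v]_\infty \in [r(A,\af)]_\infty$.

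\textbf{The main obstacle} I anticipate is in the "$\supseteq$" direction of the relative-range identity, specifically the step of converting an $F$-path witnessing $[v]_\infty \in r_F([A]_\infty,\af)$ into an honest $E$-path with source genuinely inside $A$: Example~\ref{ex_mergedgraph3} shows that $F$-paths need not lift to $E$-paths, so one must go through (\ref{eqn_samepaths}) vertex by vertex (or length by length), and one must be careful that the source vertex $u$ of the chosen $F$-path can be taken with $u \in A$ rather than merely $[u]_\infty \cap A \ne \emptyset$ — which is exactly where $A \in \bE$ and Lemma~\ref{lem_samepaths}.(v) are used to upgrade $[u]_\infty \cap A \ne \emptyset$ to $[u]_\infty \subseteq A$. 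Everything else is bookkeeping with the established facts (\ref{eqn_E_0F_0}), (\ref{eqn_samepaths}), (\ref{eqn-CECF}) and parts (i)–(v) of Lemma~\ref{lem_samepaths}.
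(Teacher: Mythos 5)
Your proposal is correct and follows essentially the same route as the paper: surjectivity via Lemma~\ref{lem_samepaths}.(iii) applied to a finite union of generalized vertices of $F$, injectivity via $A=\widehat{[A]_\infty}$ from Lemma~\ref{lem_samepaths}.(v), and the nontrivial inclusion $r_F([A]_\infty,\af)\subset[r(A,\af)]_\infty$ by lifting an $F$-path through (\ref{eqn_samepaths}) and upgrading $u\in A$ to $[u]_\infty\subset A$, exactly as the paper does. The only quibble is that your parenthetical step ``choosing $l_i$ large enough that $[v_i]_{l_i}=[v_i]_\infty$'' is unnecessary (and, taken literally, would change the set $B$); the identity $[\,[v_i]_{l_i}\,]_\infty=[\,[v_i]_\infty\,]_{l_i}$ with the given $l_i$ already yields $B=[A]_\infty$ for $A=\cup_i[v_i]_{l_i}$.
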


\begin{proof}
 To show that the map is surjective,
 let $B=\cup_j [\,[v_j]_\infty\,]_l\in \bF$.
 Then by Lemma~\ref{lem_samepaths}.(iii),
 $$\widehat{B}=\{v:[v]_\infty\in \cup_j \, [\,[v_j]_\infty\,]_l \}
 =\{v:[v]_\infty\in \cup_j\, [\,[v_j]_l\,]_\infty \}
 =\cup_j\, [v_j]_l\in \bE $$
 and  $B=[\widehat{B}]_\infty$ by (\ref{eqn_E_0F_0}).

 For injectivity,
 let  $[A_1]_\infty=[A_2]_\infty$, $A_1,\, A_2\in \bE$.
 Then by  Lemma~\ref{lem_samepaths}.(v),
$A_1=\widehat{[A_1]_\infty}=\widehat{[A_2]_\infty}=A_2$.

Now we show that
$[r(A,\af)]_\infty = r_F([A]_\infty,\af)$ for $A\in \bE$, $\af\in \CL^*(E)$.
Clearly $[r(A,\af)]_\infty \subset r_F([A]_\infty,\af)$ holds.
If $[v]_\infty\in r_F([A]_\infty,\af)$,
there exists a path $e_{\ld_1}\cdots e_{\ld_n}\in F^n$ with
$r_F(e_{\ld_1}\cdots e_{\ld_n})=[v]_\infty$.
Let $[u]_\infty:=s_F(e_{\ld_1}\cdots e_{\ld_n})$,  $[u]_\infty\in [A]_\infty$,
and $\af=\CL_F(e_{\ld_1}\cdots e_{\ld_n})$.
We may assume that $u\in A$ since $[u]_\infty=[u']_\infty$ for some $u'\in A$.
  By (\ref{eqn_samepaths}), we can find a path $\ld\in E^n$ with
  $r(\ld)=v$, $s(\ld)\in [u]_\infty\,(\subset A)$, and $\CL(\ld)=\af$.
  Thus $v\in r([u]_\infty,\af)\subset r(A, \af)\,(\in \bE)$.
  Then $[v]_\infty\subset  r(A, \af)$ and we conclude that
  $[v]_\infty\in [r(A, \af)]_\infty$.
\end{proof}

\vskip 1pc

Clearly the merged labelled space $(F, \CL, \bF)$ has no sinks
since we assume that  $(E, \CL, \bE)$ has no sinks.
Besides, $(F, \CL, \bF)$ has the following properties.

\vskip 1pc

\begin{prop}
Let $(E,\CL,\bE)$ be a labelled space such that if $v\in E^0$,
 $[v]_l$ is finite for some $l\geq 1$.
Then the merged labelled space $(F, \CL, \bF)$ is set-finite 
and receiver set-finite, respectively
if and only if  $(E,\CL,\bE)$ is set-finite and receiver set-finite,
respectively. 
Moreover $(F, \CL, \bF)$ is weakly left-resolving whenever 
$(E,\CL,\bE)$ is weakly left-resolving.
\end{prop}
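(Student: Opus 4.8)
The plan is to derive all three assertions from the bijection $A\mapsto[A]_\infty:\bE\to\bF$ of Proposition~\ref{prop_bijection} together with Lemma~\ref{lem_samepaths}. As a preliminary step I would record that $\CL^*(E)=\CL_F^*(F)$: since $E^k=\bigcup_{v\in E^0}E^kv$ and every vertex of $F$ has the form $[v]_\infty$, equation~(\ref{eqn-CECF}) gives $\CL(E^k)=\bigcup_{v\in E^0}\CL(E^kv)=\bigcup_{v\in E^0}\CL_F(F^k[v]_\infty)=\CL_F(F^k)$ for each $k\geq1$, and taking the union over $k$ yields the claim. This allows Proposition~\ref{prop_bijection} to be applied to every labelled path occurring in $(F,\CL_F)$.

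For the set-finite and receiver set-finite equivalences, the key observation is that for every $A\in\bE$ one has
\begin{align*}
\CL_F([A]_\infty F^1)&=\CL_E(AE^1),\\
\CL_F(F^1[A]_\infty)&=\CL_E(E^1A).
\end{align*}
To see this, recall that every edge of $F$ is $e_\ld$ for some $\ld\in E^1$ with $\CL_F(e_\ld)=\CL_E(\ld)$, $s_F(e_\ld)=[s(\ld)]_\infty$ and $r_F(e_\ld)=[r(\ld)]_\infty$; and for $A\in\bE$ the conditions $[s(\ld)]_\infty\in[A]_\infty$ and $s(\ld)\in\widehat{[A]_\infty}=A$ are equivalent by Lemma~\ref{lem_samepaths}.(v) (and likewise with $r$ in place of $s$). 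Since $A\mapsto[A]_\infty$ maps $\bE$ bijectively onto $\bF$, it follows that $\CL_F(BF^1)$ is finite for every $B\in\bF$ if and only if $\CL_E(AE^1)$ is finite for every $A\in\bE$, and similarly for $\CL_F(F^1B)$ against $\CL_E(E^1A)$; these are exactly the two asserted equivalences.

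For weak left-resolvedness, assume $(E,\CL,\bE)$ is weakly left-resolving, fix $B_1,B_2\in\bF$ and $\af\in\CL_F^*(F)=\CL^*(E)$, and write $B_i=[A_i]_\infty$ with $A_i\in\bE$; note that $A_1\cap A_2$ and $r(A_i,\af)$ lie in $\bE$ because $\bE$ is an accommodating set. Then I would compute, using in turn Proposition~\ref{prop_bijection}, Lemma~\ref{lem_samepaths}.(iv), weak left-resolvedness of $(E,\CL,\bE)$, Proposition~\ref{prop_bijection} again, and Lemma~\ref{lem_samepaths}.(iv) once more,
\begin{align*}
r_F(B_1,\af)\cap r_F(B_2,\af)
&=[r(A_1,\af)]_\infty\cap[r(A_2,\af)]_\infty\\
&=[r(A_1,\af)\cap r(A_2,\af)]_\infty\\
&=[r(A_1\cap A_2,\af)]_\infty\\
&=r_F([A_1\cap A_2]_\infty,\af)\\
&=r_F(B_1\cap B_2,\af),
\end{align*}
which shows that $(F,\CL,\bF)$ is weakly left-resolving.

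I do not anticipate a serious obstacle: once Lemma~\ref{lem_samepaths} and Proposition~\ref{prop_bijection} are in hand the argument is bookkeeping with the correspondence $\bE\leftrightarrow\bF$. The single point that needs care is the identity $\widehat{[A]_\infty}=A$ used in the set-finite step, which genuinely requires $A\in\bE$: for an arbitrary $A\subset E^0$ one has only $A\subset\widehat{[A]_\infty}$ by~(\ref{eqn_E_0F_0}). This is precisely why the merging is carried out along $\sim_\infty$ and why the statement is phrased for the accommodating set $\bE$ rather than for an arbitrary family of vertex subsets.
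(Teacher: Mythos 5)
Your proposal is correct and follows essentially the same route as the paper: both arguments reduce everything to the bijection $A\mapsto[A]_\infty:\bE\to\bF$ together with Lemma~\ref{lem_samepaths} and the identity $[r(A,\af)]_\infty=r_F([A]_\infty,\af)$ of Proposition~\ref{prop_bijection}. The only cosmetic difference is that you verify $\CL_F([A]_\infty F^1)=\CL(AE^1)$ and $\CL_F(F^1[A]_\infty)=\CL(E^1A)$ directly at the level of edges $e_\ld$ via $\widehat{[A]_\infty}=A$, whereas the paper derives the same two identities from $\CL(E^lA)=\CL_F(F^l[A]_\infty)$ and from $\CL(AE^1)=\{a:r(A,a)\neq\emptyset\}$; the weak left-resolving step is identical in substance.
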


\begin{proof}
By Lemma~\ref{lem_samepaths}, we know that
$A\mapsto [A]_\infty:\bE\to \bF$ forms a bijection.
From the following equalities (using (\ref{eqn-CECF}))
\begin{align*}
\CL(E^lA)&=\cup_{v\in A}\,\CL(E^l v)= \cup_{v\in A}\,\CL_F(F^l [v]_\infty)\\
 &= \cup_{[v]_\infty\in [A]_\infty}\,\CL_F(F^l [v]_\infty)\\
 & =\CL_F(F^l [A]_\infty\,)
\end{align*}
we have that $(E,\CL,\bE)$ is  receiver set-finite
if and only if  $(E,\CL,\bE)$ is  receiver set-finite.
 Since  $[r(A,\af)]_\infty = r_F([A]_\infty,\af)$ (Proposition~\ref{prop_bijection}),
\begin{align*}
\CL([A]_\infty F^{1})
 & = \{a \in \CA  \,:\, r_F([A]_\infty,a) \neq \emptyset \}\\
 & =\{a \in \CA \,:\, [r(A,a)]_\infty \neq \emptyset \}\\
 & =\CL(AE^{1}),
\end{align*}
which proves the equivalence of set-finiteness of $(E,\CL,\bE)$ and $(F, \CL, \bF)$.

 Since $(E,\CL,\bE)$ is weakly left-resolving, 
 by Lemma~\ref{lem_samepaths}.(iv) and Proposition~\ref{prop_bijection},
 we have
 \begin{align*}
  & r(A,\af) \cap r(B,\af) = r(A\cap B,\af)\\
 \Longleftrightarrow & \ [r(A,\af) \cap r(B,\af)]_\infty = [r(A\cap B,\af)]_\infty  \\
  \Longleftrightarrow & \ r_F([A]_\infty ,\af)\cap r_F([B]_\infty ,\af)
 =r_F([A]_\infty \cap [B]_\infty ,\af\,).
 \end{align*}
 Thus  $(F, \CL, \bF)$ is weakly left-resolving.
\end{proof}

\vskip 1pc

\begin{thm}\label{theorem-merged-iso}
Let $(E,\CL,\bE)$ be a labelled space such that if $v\in E^0$,
 $[v]_l$ is finite for some $l\geq 1$, and
let $(F, \CL, \bF)$  the merged labelled space of $(E,\CL,\bE)$.
Then $\{[v]_\infty \}\in \bF$ for every vertex $[v]_\infty \in F^0$ and
$$C^{*}(E, \CL, \bE) \cong C^{*}(F, \CL, \bF).$$
\end{thm}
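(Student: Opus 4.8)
The plan is to handle the two assertions separately, deriving both from Lemma~\ref{lem_samepaths} and Proposition~\ref{prop_bijection}; the genuine work has already been done there, and what remains is essentially a translation through the bijection $A\leftrightarrow [A]_\infty$.

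First I would dispose of the statement $\{[v]_\infty\}\in\bF$. By Remark~\ref{remark-vertex}.(i) there is an $l\geq 1$ with $[v]_l=[v]_\infty$; every $w\in[v]_l$ then satisfies $w\sim v$, so $[\,[v]_l\,]_\infty=\{[v]_\infty\}$. By Lemma~\ref{lem_samepaths}.(iii), $[\,[v]_\infty\,]_l=[\,[v]_l\,]_\infty=\{[v]_\infty\}$, i.e. the generalized vertex of $[v]_\infty$ in $F$ at level $l$ is the singleton $\{[v]_\infty\}$; since $\bF$ contains all generalized vertices of $(F,\CL)$, we conclude $\{[v]_\infty\}\in\bF$.

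For the isomorphism, write $C^*(E,\CL,\bE)=C^*(s_a,p_A)$ and $C^*(F,\CL,\bF)=C^*(t_a,q_B)$; the latter exists because, by the preceding propositions, $(F,\CL,\bF)$ is a set-finite, receiver set-finite, weakly left-resolving labelled space with no sinks. I would first check that $\{t_a,\ q_{[A]_\infty}: a\in\CA,\ A\in\bE\}$ is a representation of $(E,\CL,\bE)$ inside $C^*(F,\CL,\bF)$. Relation (i) of Definition~\ref{definition-representation} follows from $[\emptyset]_\infty=\emptyset$, from $[A\cap B]_\infty=[A]_\infty\cap[B]_\infty$ (Lemma~\ref{lem_samepaths}.(iv)), and from $[A\cup B]_\infty=[A]_\infty\cup[B]_\infty$; relation (ii) from $r_F([A]_\infty,a)=[r(A,a)]_\infty$ (Proposition~\ref{prop_bijection}); relation (iii) from $r_F(a)=[r(a)]_\infty$ (Lemma~\ref{lem_samepaths}.(i)); and relation (iv) from the identities $\CL(AE^1)=\CL([A]_\infty F^1)$ and $r_F([A]_\infty,a)=[r(A,a)]_\infty$ established above, so that the defining sum for $q_{[A]_\infty}$ is literally the image of the defining sum for $p_A$. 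The universal property of $C^*(E,\CL,\bE)$ then gives a $*$-homomorphism $\phi$ with $\phi(s_a)=t_a$ and $\phi(p_A)=q_{[A]_\infty}$. Running the same verification with $E$ and $F$ interchanged, using the inverse bijection $B\mapsto\widehat B$ (with $\widehat{[A]_\infty}=A$ from Lemma~\ref{lem_samepaths}.(v) and $[\widehat B]_\infty=B$ from (\ref{eqn_E_0F_0})), and the consequent identities $\widehat{B\cap C}=\widehat B\cap\widehat C$, $\widehat{r_F(B,a)}=r(\widehat B,a)$, $\widehat{r_F(a)}=r(a)$, $\CL(BF^1)=\CL(\widehat BE^1)$, produces a $*$-homomorphism $\psi$ with $\psi(t_a)=s_a$ and $\psi(q_B)=p_{\widehat B}$. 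Then $\psi\circ\phi$ and $\phi\circ\psi$ fix the respective generating sets (again by $\widehat{[A]_\infty}=A$ and $[\widehat B]_\infty=B$), so $\phi$ is an isomorphism. Alternatively, once $\phi$ is in hand one sees it is surjective (every $B\in\bF$ equals $[A]_\infty$ for some $A\in\bE$ by Proposition~\ref{prop_bijection}), that it intertwines the gauge actions, and that it sends each $p_A$ with $A\neq\emptyset$ to the nonzero projection $q_{[A]_\infty}$; injectivity then follows from Theorem~\ref{theorem-gaugeinvariant}.

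The proof is thus mostly assembly. The only point calling for a moment's care is relation (iv): one must match the finite index sets $\CL(AE^1)$ and $\CL([A]_\infty F^1)$ and dispose of the degenerate empty case, which is automatic here since $(E,\CL,\bE)$ is set-finite and has no sinks, so $\CL(AE^1)$ is finite and is nonempty precisely when $A\neq\emptyset$. Everything else is a direct transport of the representation relations across the bijection $A\leftrightarrow[A]_\infty$.
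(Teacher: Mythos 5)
Your proposal is correct and follows essentially the same route as the paper: the singleton claim via Lemma~\ref{lem_samepaths}.(iii) applied to an $l$ with $[v]_l=[v]_\infty$, and the isomorphism by verifying that $\{t_a,\,q_{[A]_\infty}\}$ is a representation of $(E,\CL,\bE)$ using Lemma~\ref{lem_samepaths} and Proposition~\ref{prop_bijection}, then concluding with surjectivity and the gauge-invariant uniqueness theorem (your second, ``alternative'' finish is exactly the paper's; the explicit two-sided inverse is a harmless extra).
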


\begin{proof}
 For $v\in E^0$, let $[v]_l$ be finite with $[v]_\infty =[v]_l$.
 We see from the proof of Lemma~\ref{lem_samepaths}.(iii)
 that $[w]_l=[v]_l(=[v]_\infty)
 \Longleftrightarrow [\,[w]_\infty\,]_l=[\,[v]_\infty\,]_l,$
 which then shows
 $\{[v]_\infty \} =\{[w]_\infty: [\,[w]_\infty\,]_l=[\,[v]_\infty\,]_l\}
 = [\,[v]_\infty\, ]_l \in \bF$.

Let $C^*(E, \CL, \bE)=C^*(p_{A}, s_{a})$
and $C^*(F, \CL_F, \bF)=C^*(q_{[A]_\infty }, t_{a})$.
Note first that $\{P_A:=q_{[A]_\infty}: \,A\in \bE\}\cup \{S_a:=t_a:\, a\in \CA\}$
is a representation of $(E, \CL, \bE)$:
\begin{enumerate}
\item[(i)] If $A,B\in \bE$,
then $P_A P_B=q_{[A]_\infty}\,q_{[B]_\infty}=q_{\,[A]_\infty\cap [B]_\infty}
=q_{\,[A\cap B]_\infty}=P_{A\cap B}$
 and $P_{A\cup B}= q_{\,[A\cup B]_\infty}=q_{\,[A]_\infty\cup [B]_\infty}
 =q_{[A]_\infty}+q_{[B]_\infty}-q_{\,[A\cap B]_\infty}
 =P_A + P_B - P_{A\cap B}$, where $P_\emptyset=q_{\emptyset}=0$.

\item[(ii)] If $A \in \bE$ and  $a \in \CA$, then
  $P_A S_a =q_{[A]_\infty} t_a = t_a q_{r_F([A]_\infty,a)}
  = t_a q_{[r(A,a)]_\infty}= S_a P_{r(A, a)}$.

\item[(iii)] If $a,b\in \CA$, $S_a^* S_a= t_{a}^{*}t_{a} =q_{r_F(a)}
=q_{[r(a)]_\infty}=P_{r(a)}$ and
 $S_a^* S_b= t_{a}^{*}t_{b}=0$ unless $a = b$.

\item[(iv)] For $A\in \bE$,
  \begin{align*} P_A   = q_{[A]_\infty} &
  =  \sum_{a \in \CL_F([A]_\infty F^1)}  t_{a}q_{r_F([A]_\infty,a)}t_{a}^{*} \\
       & = \displaystyle{\sum_{a \in \CL(AE^{1})}}\, t_a q_{\,[r(A,a)]_\infty}\,t_a^*\\
       & = \displaystyle{\sum_{a \in \CL(AE^{1})}} S_aP_{r(A,a)}\,S_a^*.
  \end{align*}
\end{enumerate}
Thus there exists a surjective $*$-homomorphism $\Phi : C^*(E, \CL, \bE)  \to C^*(F, \CL, \bF)$
such that $\Phi(p_A)=q_{[A]_\infty}$  and $\Phi(s_a)=t_a$
for $A\in \bE,\  a\in \CA$.
$\Phi$ is an isomorphism by Theorem~\ref{theorem-gaugeinvariant}.
\end{proof}

\vskip 1pc

Recall \cite{BP2, JK} that a labelled space $(E,\CL,\bE)$ is {\it disagreeable} if
for each $[v]_l$,
there exists an $N\geq 1$ such that for all $n\geq N$
there is a labelled path
$\af\in \CL([v]_lE^{\geq n})$ that is not agreeable,
that is, not of the form
$\af=\bt\af'=\af'\gm$ for some $\af',\, \bt,\, \gm \in \CL(E^{\geq 1})$
with $|\bt|=|\gm|\leq l$.
Also $(E, \CL, \CB)$ is  {\it strongly cofinal} \cite{JK} if
for all  $x\in  \CL(E^\infty)$, $w\in s(x)$, and $[v]_l\in \Omega_l(E)$,
there are $N\geq 1$ and  a finite number of labelled paths
$\ld_1, \dots, \ld_m$  such that
$$r([w]_1,  x_1\cdots x_N )\subset \cup_{i=1}^m r([v]_l, \ld_i).$$

\vskip 1pc

\begin{thm}\label{theorem-cofinal}
Let $(E,\CL,\bE)$ be a labelled space such that if $v\in E^0$,
 $[v]_l$ is finite for some $l\geq 1$, and let
 $(F, \CL_F, \bF)$ be the merged labelled space of $(E,\CL,\bE)$.
 Then we have the following:
 \begin{enumerate}
 \item[(i)] $(E, \CL, \bE)$ is strongly cofinal
 if and only if  $(F, \CL_F, \bF)$ is strongly cofinal.
 \item[(ii)]  $(E, \CL, \bE)$ is disagreeable
 if and only if $(F, \CL_F, \bF)$ is disagreeable.
 \end{enumerate}
\end{thm}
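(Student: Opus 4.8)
The plan is to transport both conditions across the bijection $A\mapsto[A]_\infty:\bE\to\bF$ of Proposition~\ref{prop_bijection}, which intertwines relative ranges ($[r(A,\af)]_\infty=r_F([A]_\infty,\af)$), finite unions, and --- on $\bE$ --- finite intersections (Lemma~\ref{lem_samepaths}.(iv)), and which by Lemma~\ref{lem_samepaths}.(iii) carries generalized vertices to generalized vertices via $[\,[v]_l\,]_\infty=[\,[v]_\infty\,]_l$, so that $[v]_l\mapsto[\,[v]_\infty\,]_l$ is a bijection $\Omega_l(E)\to\Omega_l(F)$ for every $l$. Two facts are used throughout: every $A\in\bE$ is $\sim$-saturated, that is $A=\widehat{[A]_\infty}$ (Lemma~\ref{lem_samepaths}.(v)), so for $A,B\in\bE$ the inclusion $A\subset B$ is equivalent to $[A]_\infty\subset[B]_\infty$; and the label equalities $\CL([u]_\infty E^kv)=\CL_F([u]_\infty F^k[v]_\infty)$ and $\CL(E^kv)=\CL_F(F^k[v]_\infty)$ of (\ref{eqn_samepaths}) and (\ref{eqn-CECF}), which in particular give $\CL(E^{\geq 1})=\CL_F(F^{\geq 1})$.

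For (ii), note that $[v]_l$ is $\sim$-saturated (if $w\in[v]_l$ and $w\sim w'$ then $[w']_l=[w]_l=[v]_l$), so as a subset of $E^0$ it is the disjoint union of the $\sim$-classes it contains, and these classes are exactly the vertices of $[\,[v]_\infty\,]_l$. Combining this with (\ref{eqn_samepaths}) and the trivial identity $\bigcup_u\CL(AE^ku)=\CL(AE^k)$ yields $\CL([v]_lE^{\geq n})=\CL_F([\,[v]_\infty\,]_lF^{\geq n})$ for every $n\geq 1$. Since $\CL(E^{\geq 1})=\CL_F(F^{\geq 1})$ and prefixes and suffixes of labelled paths are again labelled paths, ``agreeable'' is a property of a pair (labelled path, $l$) that does not refer to the graph; hence the defining clause of disagreeability at $[v]_l$ holds for $(E,\CL,\bE)$ exactly when it holds at $[\,[v]_\infty\,]_l$ for $(F,\CL_F,\bF)$, and as $[v]_l$ ranges over $\Omega_l(E)$ the class $[\,[v]_\infty\,]_l$ ranges over all of $\Omega_l(F)$, which proves (ii).

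For (i), the implication ``$(F,\CL_F,\bF)$ strongly cofinal $\Rightarrow$ $(E,\CL,\bE)$ strongly cofinal'' is routine. Let $x\in\CL(E^\infty)$ be realized by $\dt\in E^\infty$ with $s(\dt)=w$, and let $[v]_l\in\Omega_l(E)$; then $e_{\dt_1}e_{\dt_2}\cdots$ realizes $x$ in $F$ with source $[w]_\infty$, so $x\in\CL_F(F^\infty)$ and $[w]_\infty\in s_F(x)$. Applying strong cofinality of $F$ to $x$, $[w]_\infty$ and $[\,[v]_\infty\,]_l$ produces $N$ and labelled paths $\ld_1,\dots,\ld_m\in\CL_F(F^{\geq 1})=\CL(E^{\geq 1})$ with $r_F([\,[w]_\infty\,]_1,x_1\cdots x_N)\subset\bigcup_i r_F([\,[v]_\infty\,]_l,\ld_i)$, which by Proposition~\ref{prop_bijection} and Lemma~\ref{lem_samepaths}.(iii) reads $[\,r([w]_1,x_1\cdots x_N)\,]_\infty\subset[\,\bigcup_i r([v]_l,\ld_i)\,]_\infty$. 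Applying $\widehat{\,\cdot\,}$ and using that $r([w]_1,x_1\cdots x_N)$ and $\bigcup_i r([v]_l,\ld_i)$ lie in $\bE$, we recover $r([w]_1,x_1\cdots x_N)\subset\bigcup_i r([v]_l,\ld_i)$, as required.

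The converse implication is the technical heart, and it reduces to a lifting statement: every $y\in\CL_F(F^\infty)$ lies in $\CL(E^\infty)$, and for any $W\in s_F(y)$ it is realized by an $E$-path with source in $W$ (the reverse inclusion $\CL(E^\infty)\subset\CL_F(F^\infty)$ being immediate from $\ld\mapsto e_\ld$). I would prove this by K\"onig's lemma. Fix a realizing $F$-path $e_{\mu_1}e_{\mu_2}\cdots$ for $y$ passing through $F$-vertices $W=V_0,V_1,V_2,\dots$, each a \emph{finite} subset of $E^0$ by the hypothesis on generalized vertices. For $k\geq 1$ one has $V_k=[r(\mu_k)]_\infty$ with $r(\mu_k)\in r(V_{k-1},y_k)$, and since $V_{k-1}\in\bE$ (Remark~\ref{remark-vertex}.(i)) the relative range $r(V_{k-1},y_k)$ lies in $\bE$ and hence is $\sim$-saturated (Lemma~\ref{lem_samepaths}.(v)), so it contains all of $V_k$; thus every $v\in V_k$ receives an $E$-edge labelled $y_k$ from some vertex of $V_{k-1}$. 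Let $\mathcal{T}$ be the tree whose level-$k$ nodes are the tuples $(x_0,\dots,x_k)\in V_0\times\cdots\times V_k$ such that for each $i$ some $E$-edge labelled $y_i$ runs from $x_{i-1}$ to $x_i$; $\mathcal{T}$ is finitely branching because each $V_i$ is finite, and it is nonempty at every level because a level-$k$ node can be built by choosing $x_k\in V_k$ and descending, at each stage using $x_i\in V_i\subset r(V_{i-1},y_i)$ to find an admissible $x_{i-1}\in V_{i-1}$. K\"onig's lemma then gives an infinite branch $(x_0,x_1,\dots)$, and choosing one edge at each step yields $\dt\in E^\infty$ with $\CL(\dt)=y$ and $s(\dt)=x_0\in W$. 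Granting this, given $y\in\CL_F(F^\infty)$, $W\in s_F(y)$ and $[V]_l=[\,[v_0]_\infty\,]_l\in\Omega_l(F)$ one sets $x:=y$, $w:=x_0$, applies strong cofinality of $E$ to $x$, $w$ and $[v_0]_l$, and transports the inclusion to $F$ by the same bookkeeping as in the easy direction (now applying $[\,\cdot\,]_\infty$ rather than $\widehat{\,\cdot\,}$, and using $[w]_\infty=W$). The main obstacle is precisely this K\"onig argument: the need for finite branching is what dictates working through the finite $\sim$-classes $V_k$ rather than through arbitrary lifts of the word $y_1\cdots y_k$, and it is the $\sim$-saturation of relative ranges in $\bE$ that makes such a passage possible.
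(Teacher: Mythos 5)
Your proof is correct and follows the paper's overall strategy: part (ii) and the implication from strong cofinality of $(F,\CL_F,\bF)$ to that of $(E,\CL,\bE)$ are handled exactly as in the paper, by transporting label sets, generalized vertices and range inclusions through the bijection $A\mapsto[A]_\infty$ via Lemma~\ref{lem_samepaths} and Proposition~\ref{prop_bijection}, and then pulling inclusions back with $\widehat{\,\cdot\,}$. The one point of genuine divergence is the lifting of an infinite labelled path of $F$ to one of $E$ in the remaining implication. The paper argues by pigeonhole on the finite class $[u_0]_\infty$: for each $i$ some vertex of $[u_0]_\infty$ emits a path labelled $x_1\cdots x_i$ ending at $u_i$, so a single $u_0'$ works for infinitely many $i$, and the paper then asserts $x\in\CL(u_0'E^\infty)$. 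That final assertion conceals a compactness step --- realizing every finite prefix from a fixed vertex does not by itself produce an infinite path, and the relevant tree of $E$-paths from $u_0'$ need not be finitely branching since no row-finiteness is assumed. Your K\"onig's-lemma tree, whose level-$k$ nodes are vertex tuples drawn from the finite classes $V_0,\dots,V_k$ along the $F$-path and whose nonemptiness at each level rests on the $\sim$-saturation $V_k\subset r(V_{k-1},y_k)$ of relative ranges in $\bE$, is finitely branching by hypothesis and constructs the infinite $E$-path outright, with source in the prescribed class $W$. In that respect your treatment closes a step the paper leaves implicit; everything else, including the final bookkeeping with $[\,\cdot\,]_\infty$ and Lemma~\ref{lem_samepaths}.(iii) and (v), matches the paper's argument.
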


\begin{proof}
(i)  Suppose $(E, \CL, \bE)$ is strongly cofinal
and let $x=x_1x_2\cdots\in \CL_F(F^\infty)$,  $[u_0]_\infty\in s_F(x)$
and $[\,[v]_\infty\,]_l\in\Omega_l(F)$.
Fix $[u_i]_\infty \in r_F(x_i)$ for each $i$.
Then $x_1\cdots x_i \in \CL([u_0]_\infty F^i [u_i]_\infty)$ for $i\geq 1$.
Since $\CL([u_0]_\infty E^i u_i)=\CL_F([u_0]_\infty F^i [u_i]_\infty)$ (by (\ref{eqn_samepaths})),
 $x_1\cdots x_i \in \CL([u_0]_\infty E^i u_i)$ for all $i\geq 1$.
Then the finite set  $[u_0]_\infty $ must have a vertex
$u_0'\in [u_0]_\infty$ such that $x_1\cdots x_i \in \CL( u_0'E^i u_i)$
for infinitely many $i$'s, which means that
$x\in \CL(u_0' E^\infty)$.
Since $(E, \CL, \bE)$ is strongly cofinal,
there exists an $N\geq 1$ and
a finite number of labelled paths $\ld_1, \dots, \ld_m\in \CL(E^{\geq 1})$
such that $r([u_0']_1, x_1\cdots x_N)\subset \cup_{j=1}^m r([v]_l, \ld_j)$.
Then $[\,r([u_0']_1, x_1\cdots x_N)\,]_\infty
\subset [\,\cup_{j=1}^m r([v]_l, \ld_j)\,]_\infty $,
that is,
$$ r_F(\,[\,[u_0']_1\,]_\infty , x_1\cdots x_N\,)
\subset \cup_{j=1}^m r_F(\,[\,[v]_l\,]_\infty, \ld_j\,)
= \cup_{j=1}^m r_F(\, [\,[v]_\infty \,]_l, \ld_j\,),$$
and we see that $(F, \CL_F, \bF)$ is strongly cofinal.

Conversely, assuming that $(F, \CL_F, \bF)$ is strongly cofinal, if
$x\in \CL(E^\infty)$ is an infinite labelled path with
$u\in s(x)$ and $[v]_l\in \Omega_l(E)$,
clearly $x\in \CL_F([u]_\infty F^\infty)$ and so
there exist an $N\geq 1$ and a finite number of labelled paths
$\ld_1, \dots, \ld_m\in \CL_F(F^{\geq 1})$ such that
$$r_F(\,[\,[u]_\infty\, ]_1, x_1\cdots x_N\,)
\subset \cup_{j=1}^m r_F(\,[\,[v]_\infty \,]_l, \ld_j\,).$$
Hence, we have
$[r([u]_1, x_1\cdots x_N)]_\infty\,\subset [\cup_{j=1}^m r([v]_l, \ld_j)]_\infty$
by Proposition~\ref{prop_bijection}.
Then  Lemma~\ref{lem_samepaths}.(v) shows that
$r([u]_1, x_1\cdots x_N)\subset  \cup_{j=1}^m r([v]_l, \ld_j)$ and
so $(E, \CL, \bE)$ is strongly cofinal.

 (ii) Note that each  $[v]_l$
 is a union of finitely many  equivalence classes $[v']_\infty$ of $v'\in [v]_\infty$.
 From (\ref{eqn_samepaths}), we have
 \begin{align*}
 \CL([v]_l E^n)& =\cup_{v'\in [v]_l} \, \CL([v']_\infty E^n)\\
 & =\cup_{v'\in [v]_l,\, w\in E^0}\, \CL([v']_\infty E^n w)\\
 &=\cup_{[v']_\infty \in [\,[v]_l]_\infty,\,
    [w]_\infty \in F^0}\, \CL_F(\,[v']_\infty F^n [w]_\infty \,)\\
 &=\cup_{[v']_\infty \in [\,[v]_\infty \,]_l}\, \CL_F(\,[v']_\infty F^n\,)\\
 &= \CL_F(\,[\,[v]_\infty \,]_l F^n\,),
 \end{align*}
which shows the assertion.
\end{proof}

\vskip 1pc
It is known that if  $C^*(E, \CL, \bE)$ is simple,
$(E, \CL, \bE)$  is strongly cofinal (\cite[Theorem 3.8]{JK})
and  if, in addition, $\{v\}\in \bE$ for all $v\in E^0$,
$(E, \CL, \bE)$  is disagreeable (\cite[Theorem 3.14]{JK}).
Also if $(E, \CL, \bE)$  is strongly cofinal and disagreeable,
$C^*(E, \CL, \bE)$ is simple (\cite[Theorem 3.16]{JK}).
Therefore by Theorem~\ref{theorem-merged-iso} and Theorem~\ref{theorem-cofinal}
we have the following corollary.

\vskip 1pc

\begin{cor}\label{corollary}
Let $(E, \CL, \bE)$ be a set-finite, receiver set-finite, and  weakly left-resolving labelled space
such that for each $v\in E^0$ there is an $l\geq 1$ for which $[v]_l$ is finite.
Then  $C^{*}(E, \CL, \bE)$ is simple
if and only if $(E, \CL, \bE)$ is strongly cofinal and disagreeable.
\end{cor}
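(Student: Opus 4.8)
The plan is to assemble this corollary from the three simplicity criteria of \cite{JK} recalled immediately above, using the merged labelled space $(F,\CL_F,\bF)$ as a bridge whenever a criterion requires that every singleton vertex set belong to the accommodating set. First I would record the structural facts that make the cited theorems applicable to $(F,\CL_F,\bF)$: under the standing hypotheses, $(F,\CL_F,\bF)$ is again set-finite, receiver set-finite, weakly left-resolving and has no sinks (by Assumption 1 and the Proposition preceding Theorem~\ref{theorem-merged-iso}), one has $\{[v]_\infty\}\in\bF$ for every vertex $[v]_\infty\in F^0$, and $C^*(E,\CL,\bE)\cong C^*(F,\CL_F,\bF)$ (Theorem~\ref{theorem-merged-iso}).

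For the ``if'' direction, suppose $(E,\CL,\bE)$ is strongly cofinal and disagreeable. Then \cite[Theorem 3.16]{JK} applies directly and yields that $C^*(E,\CL,\bE)$ is simple; no passage to the merged space is needed here. (Equivalently, one could transport strong cofinality and disagreeability to $(F,\CL_F,\bF)$ via Theorem~\ref{theorem-cofinal}, apply \cite[Theorem 3.16]{JK} to $F$, and conclude by the isomorphism $C^*(E,\CL,\bE)\cong C^*(F,\CL_F,\bF)$.)

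For the ``only if'' direction, assume $C^*(E,\CL,\bE)$ is simple. By \cite[Theorem 3.8]{JK}, $(E,\CL,\bE)$ is strongly cofinal. Disagreeability is the delicate point: I cannot invoke \cite[Theorem 3.14]{JK} on $(E,\CL,\bE)$ itself, because $\{v\}\in\bE$ need not hold, so I route the argument through the merged space. Since $C^*(F,\CL_F,\bF)\cong C^*(E,\CL,\bE)$ is simple and $\{[v]_\infty\}\in\bF$ for every $[v]_\infty\in F^0$, \cite[Theorem 3.14]{JK} gives that $(F,\CL_F,\bF)$ is disagreeable, and then Theorem~\ref{theorem-cofinal}(ii) transfers this back to show $(E,\CL,\bE)$ is disagreeable. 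Combining the two implications completes the proof.

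The argument is essentially bookkeeping, and the single step that genuinely requires care — the very reason the merged labelled graph was introduced — is the implication ``simple $\Rightarrow$ disagreeable'': \cite[Theorem 3.14]{JK} carries the hypothesis that every singleton lies in the accommodating set, which a general labelled space $(E,\CL,\bE)$ need not satisfy, whereas $(F,\CL_F,\bF)$ always does. The only other thing to check is that $(F,\CL_F,\bF)$ inherits all the standing structural hypotheses so that the theorems of \cite{JK} and Theorem~\ref{theorem-cofinal} genuinely apply to it, which is exactly the content of the Proposition just before Theorem~\ref{theorem-merged-iso}.
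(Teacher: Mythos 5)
Your proposal is correct and follows essentially the same route as the paper: strong cofinality comes from \cite[Theorem 3.8]{JK} applied to $(E,\CL,\bE)$ directly, disagreeability is obtained by passing to the merged space $(F,\CL_F,\bF)$ (where singletons lie in $\bF$) via Theorem~\ref{theorem-merged-iso} and \cite[Theorem 3.14]{JK} and transferring back with Theorem~\ref{theorem-cofinal}, and the converse is \cite[Theorem 3.16]{JK}. Your explicit check that $(F,\CL_F,\bF)$ inherits the standing hypotheses is exactly the role of the proposition preceding Theorem~\ref{theorem-merged-iso}, so nothing is missing.
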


\vskip 1pc

\end{document}